\documentclass[11pt]{amsart}

\usepackage{amsthm}
\usepackage{amssymb}
\usepackage{latexsym}
\usepackage{euscript}
\usepackage{amsmath} 
\usepackage{amstext}
\usepackage{amsgen}
\usepackage{amsbsy}
\usepackage{amsopn}
\usepackage{amsfonts}
\usepackage{hyperref}

\usepackage{thmtools} 
\usepackage[capitalize]{cleveref}
\crefname{theorem}{Theorem}{Theorems}
\Crefname{theorem}{Theorem}{Theorems}
\crefname{lemma}{Lemma}{Lemmas}
\Crefname{lemma}{Lemma}{Lemmas}
\crefname{proposition}{Proposition}{Propositions}
\Crefname{proposition}{Proposition}{Propositions}
\crefname{corollary}{Corollary}{Corollaries}
\Crefname{corollary}{Corollary}{Corollaries}
\crefname{definition}{Definition}{Definitions}
\Crefname{definition}{Definition}{Definitions}
\crefname{example}{Example}{Examples}
\Crefname{example}{Example}{Examples}
\crefname{remark}{Remark}{Remarks}
\Crefname{remark}{Remark}{Remarks}
\crefname{equation}{}{}
\Crefname{equation}{}{}

\usepackage{graphicx}
\usepackage{bbm}
\usepackage{tikz} 
\usepackage{tikz-cd}

\usepackage{enumerate}
\usepackage{color}
\usepackage{a4wide}
\usepackage{mathrsfs}
\usepackage{multirow}
\usepackage{mathtools}

\hypersetup{
    colorlinks=true,
    linkcolor=red}

\usetikzlibrary{matrix,arrows,decorations.pathmorphing}

\theoremstyle{definition}

\newtheorem{theorem}{Theorem}[section]
\newtheorem{proposition}[theorem]{Proposition}
\newtheorem{corollary}[theorem]{Corollary}
\newtheorem{lemma}[theorem]{Lemma}

\newtheorem{thmx}{Theorem}

\theoremstyle{definition}

\newtheorem{example}[theorem]{Example}

\newtheorem{remark}[theorem]{Remark}

\newcommand{\defn}[1]{\emph{#1}}

\newcommand{\ie}{i.e.\ }
\newcommand{\eg}{e.g.\ }

\newcommand{\cf}{cf.\ }

\newcommand{\N}{\mathbb{N}}
\newcommand{\Z}{\mathbb{Z}}

\newcommand{\R}{\mathbb{R}}
\newcommand{\C}{\mathbb{C}}

\newcommand{\kk}{\mathbbm{k}}
\newcommand{\PP}{\mathbb{P}}

\newcommand{\ob}[1]{\mathcal{#1}}

\newcommand{\cat}[1]{\mathsf{#1}}
\renewcommand{\mod}[1]{{#1}\text{-}\cat{mod}}
\newcommand{\orth}{ {}^\perp}

\newcommand{\Filt}[1]{\mathrm{filt}(#1)}

\newcommand{\depth}[1]{\,\mathrm{depth(#1)}}
\newcommand{\loc}[1]{\mathrm{Loc}(#1)}
\newcommand{\shf}[1]{\mathrm{Sh}( #1)}
\newcommand{\perv}[2]{{}^{#1}\cat{Perv}(#2)}

\newcommand{\pfun}[2]{^{#1}{#2}}

\newcommand{\palg}[2]{^{#1\hspace{-3pt}}A(#2)}

\newcommand{\constr}[2]{\cat{D}_{#1}({#2})}
\newcommand{\der}[2]{\cat{D}^{#1}({#2})}

\newcommand{\triang}[2]{\mathrm{tr}_{#1}(#2)}
\newcommand{\thick}[2]{\mathrm{th}_{#1}(#2)}

\newcommand{\id}{\mathrm{id}}

\newcommand{\mor}[3]{{\mathrm{Hom}^{#1}}\!\left(#2,#3\right)}
\newcommand{\Mor}[3]{{\mathrm{Hom}}_{#1}\!\left(#2,#3\right)}

\newcommand{\Ext}[4]{{\mathrm{Ext}_{#1}^{#2}}\!\left(#3,#4\right)}
\newcommand{\End}[1]{\mathrm{End}\!\left(#1\right)}
\newcommand{\EndA}[2]{\mathrm{End}_{#1}\!\left(#2\right)}

\newcommand{\sh}[1]{\mathcal{#1}}

\newcommand{\pt}{{\mathsf{pt}}}
\newcommand{\codim}{\mathrm{codim}\, }
\newcommand{\proje}[1]{\mathrm{proj}(#1)}
\newcommand{\strata}[1]{\mathscr{S}(#1)}
\newcommand{\pdim}[1]{\delta(#1)} 

\newcommand{\pdl}[2]{ {^{#1}}\cat{D}^{\leq{#2}}}
\newcommand{\pdg}[2]{ {^{#1}}\cat{D}^{\geq{#2}}}
\newcommand{\pbar}{\overline{p}}
\newcommand{\dups}{p^*}
\newcommand{\gdim}[1]{\mathrm{gldim}\left(#1\right)} 

\newcommand{\vect}[1]{\mathrm{Vect}_{#1}}

\newcommand{\monic}{\hookrightarrow}

\newcommand{\stan}[2]{\pfun{#1}{\Delta_{#2}}}
\newcommand{\costan}[2]{\pfun{#1}{\nabla_{\! #2}}}

\renewcommand{\star}[1]{\mathrm{st}(#1)}

\newcommand{\rk}[1]{\,\mathrm{rk}(#1)}

\makeatletter
\@namedef{subjclassname@2020}{\textup{2020} Mathematics Subject Classification}
\makeatother

\title{Faithful perversities}
\author{Alessio Cipriani and Jon Woolf}
\address{Alessio Cipriani, Dipartimento di Informatica - Settore di Matematica, Universit\`a degli Studi di Verona, Strada le Grazie 15 - Ca' Vignal, I-37134 Verona, Italy}
\email{alessio.cipriani@univr.it}
\address{Jon Woolf, Dept. of Mathematical Sciences, University of Liverpool, L69 7ZL, U.K.}
\email{jonwoolf@liverpool.ac.uk}

\begin{document}

\begin{abstract}
We show that the faithful highest weight hearts in an algebraic triangulated category are the serially faithful glued hearts, equivalently the hearts containing a dual pair of full exceptional collections in the sense of Bodzenta--Bondal \cite{Bodzenta:2026aa}. We then characterise faithful highest weight categories of perverse sheaves on topologically stratified spaces algebraically, in terms of the exactness of certain functors, and topologically, in terms of the vanishing of certain cohomology groups of pairwise links. 

We prove that the global dimension of a faithful category of perverse sheaves on a topologically stratified space $X$ with finitely many strata is bounded by the dimension of $X$. Finally, we show that in this setting the hypercohomology of a perverse sheaf can be computed from a projective resolution of the constant sheaf, and conversely that the multiplicities of the terms in a minimal projective resolution of the constant sheaf can be computed as intersection cohomology groups.
\end{abstract}

\subjclass[2020]{18G80, 32S60, 55N33} 
\keywords{Perverse sheaves, faithful hearts, global dimension, hypercohomology. }

\maketitle

\section{Introduction}

Categories of perverse sheaves on a topologically stratified space were introduced in \cite{BBD} as hearts of bounded t-structures on the constructible derived category $\constr{c}{X}$ of a stratified space $X$. Perverse sheaves play an important role in studying the topology of singular spaces and have applications in stratified Morse and Hodge theory, geometric representation theory, Kazhdan--Lusztig theory, and $\mathscr{D}$-modules.

The category $\perv{p}{X}$  of perverse sheaves on a topologically stratified space $X$ depends on a perversity $p$, that is on an assignment of an integer to each stratum of $X$. Among the possible perversities one can consider, the ones which depend on the dimension and grow in a controlled way, which we call GM-perversities --- see \cref{subsec:perversities} --- are the most important. For these, the category of perverse sheaves captures interesting topological information about the space $X$: for instance, simple perverse sheaves are the intersection cohomology complexes in the sense of \cite{GMIH1}. 

When $X$ has finitely many strata, each with finite fundamental group, the category $\perv{p}{X}$ is equivalent to the category $\mod{\palg{p}{X}}$ of finite-dimensional modules over a finite dimensional algebra $\palg{p}{X}$. In this paper, we study the special situation in which $\perv{p}{X}$ is a faithful heart, \ie in which the equivalence $\perv{p}{X} \simeq \mod{\palg{p}{X}}$ extends to a triangulated equivalence $\constr{c}{X} \simeq \der{b}{\mod{\palg{p}{X}}}$. This is interesting because it is the case in which the topological information in the constructible derived category is accessible from the algebra, and {\it vice versa}. As a consequence, algebraic properties of the category $\perv{p}{X}$ can be expressed in terms of topological properties of the space $X$, see for instance \cref{thmB} below. There are two main classes of examples of faithful perverse hearts coming from simplicial topology and complex geometry respectively:
\begin{enumerate} 
    \item Polishchuk \cite[Theorem 4.2]{MR1453053} shows that $\perv{p}{X}$ is faithful when $X$ is a compact space stratified by a triangulation, and $p$ is any GM-perversity;
    \item Beilinson, Ginzburg and Soergel \cite[Corollary 3.3.2]{MR1322847} show $\perv{p}{X}$ is faithful for the middle perversity $p$ when $X$  is a smooth complex projective variety with affine strata, and Beilinson, Bezrukavnikov and Mirkovic \cite[\S1.5]{MR2119139} extends this to the situation in which $X$  is a complex variety stratified by algebraic subvarieties so that each stratum is smooth and affine with vanishing higher cohomology.
\end{enumerate} 

In Section~\ref{subsec:FH} we introduce the notion of \defn{serially faithful hearts} with respect to an admissible thick filtration of the ambient triangulated category as the iterated version of faithful hearts.  We show that serially faithful hearts which are glued along a full exceptional collection --- see Section~\ref{sec:thick subcats} --- are precisely the faithful highest weight categories. The following characterisation of faithful highest weight hearts follows from the results of \cite{Bodzenta:2026aa}.
\begin{thmx}[\cref{thm:fhw}]\label{thmA}
Let $\cat{D}$ be an algebraic triangulated category. The following are equivalent:
\begin{enumerate}
\item The heart $\cat{H}\subset \cat{D}$ is faithful and highest weight;
\item The sequence $(\stan{}{0},\ldots,\stan{}{n})$ of standard objects is a full exceptional collection with left dual pair the sequence $(\costan{}{n},\ldots,\costan{}{0})$ of  costandard objects;
\item The heart $\cat{H}$ contains a dual pair of full exceptional collections;
\item The heart $\cat{H}$ is  glued along a full exceptional collection and serially faithful for the associated admissible thick filtration.
\end{enumerate} 
\end{thmx}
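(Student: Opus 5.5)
I plan to prove the cycle (1)$\Rightarrow$(2)$\Rightarrow$(3)$\Rightarrow$(4)$\Rightarrow$(1), relying on the Bodzenta--Bondal results \cite{Bodzenta:2026aa} for the algebraic input. The key tool is that $\cat{D}$ is algebraic. In that setting, a faithful heart $\cat{H}$ with enough projectives gives a triangle equivalence $\der{b}{\cat{H}}\simeq\cat{D}$, so Ext groups in $\cat{H}$ agree with Hom groups in $\cat{D}$.

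\textbf{(1)$\Rightarrow$(2).} In a highest weight category, with the order $0<\dots<n$, the standard and costandard objects satisfy the usual vanishing:
\[
\Ext{\cat{H}}{k}{\stan{}{i}}{\stan{}{j}}=0 \text{ for } i>j, \qquad \End{\stan{}{i}}=\kk,\ \Ext{\cat{H}}{k}{\stan{}{i}}{\stan{}{i}}=0 \ (k>0),
\]
\[
\Ext{\cat{H}}{k}{\stan{}{i}}{\costan{}{j}}=\kk^{\delta_{ij}}\delta_{k0}.
\]
Faithfulness transports these identities to $\cat{D}$. Hence $(\stan{}{0},\ldots,\stan{}{n})$ is exceptional, and it has the costandard objects as dual collection. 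Fullness follows because the simple objects are generated by standards through the filtrations, $\cat{H}$ generates $\cat{D}$, and $\der{b}{\cat{H}}$ is generated by the simples.

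\textbf{(2)$\Rightarrow$(3)} is immediate, since both collections lie in $\cat{H}$.

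\textbf{(3)$\Rightarrow$(4).} From a dual pair $(E_i),(F_i)$ in $\cat{H}$, I take the admissible thick filtration $\cat{D}_k=\langle E_0,\dots,E_k\rangle$. I then check that $\cat{H}$ is the heart glued from the hearts $\langle E_i\rangle\simeq \mod{\kk}$ on the subquotients, by recognising the recollement truncations as exact on $\cat{H}$. Serial faithfulness then reduces to showing that each $\cat{H}\cap\cat{D}_k$ is faithful in $\cat{D}_k$. At this step I will invoke Bodzenta--Bondal: a heart containing a dual pair of full exceptional collections is equivalent to modules over the quasi-hereditary algebra $\End{\bigoplus P_i}$, realized compatibly with $\cat{D}$ via a tilting object. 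Since the dual pair restricts to each $\cat{D}_k$, the same argument applies at every level.

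\textbf{(4)$\Rightarrow$(1).} I argue by induction on $n$. Gluing along a semiorthogonal decomposition $\cat{D}=\langle\cat{D}_{n-1},\langle E_n\rangle\rangle$ produces a recollement of hearts. The gluing heart is highest weight once I show two things: the subheart is highest weight by induction, and the new stratum contributes a standard object $j_!$ and a costandard object $j_*$ with the correct filtrations. Faithfulness comes from comparing Ext groups via the long exact sequences of the recollement. Serial faithfulness supplies faithfulness at the level of $\cat{D}_{n-1}$, and the remaining comparison comes from adjunction with the exceptional object.

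The main obstacle is the faithfulness comparison in the last step. Ext groups of $\cat{H}$ between the open part and the closed part must match Hom groups in $\cat{D}$. This should require the algebraicity hypothesis, through a dg enhancement that realizes $\der{b}{\cat{H}}\to\cat{D}$. It should also require the Bodzenta--Bondal criterion that a realization functor is an equivalence once it is an equivalence on each exceptional piece and on the Ext groups between them. I will defer to \cite{Bodzenta:2026aa} for this step rather than reprove it.
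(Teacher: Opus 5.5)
Your (1)$\Rightarrow$(2)$\Rightarrow$(3) is essentially the paper's argument. For (3)$\Rightarrow$(4) you, like the paper, rest on \cite{Bodzenta:2026aa}: Theorem~1.3(1) there gives the gluing, which you only assert, and Theorem~1.1(2) gives the serial faithfulness.

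The gap is in (4)$\Rightarrow$(1), where you have put the difficulty in the wrong place. Faithfulness is part of the hypothesis. Serial faithfulness requires $\cat{H}\cap\cat{D}_r$ to be faithful in $\cat{D}_r$ for \emph{every} $r$, including $r=n$ where $\cat{D}_n=\cat{D}$. So no Ext comparison through the recollement, and no further appeal to \cite{Bodzenta:2026aa}, is needed.

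What must be proved is the highest weight property. For that you offer only that the new stratum ``contributes a standard object $j_!$ and a costandard object $j_*$ with the correct filtrations''. That is the whole content, and it is unsupported:
\begin{itemize}
\item $j_!$ is only right t-exact, so $j_!E_n$ need not lie in $\cat{H}$.
\item Nothing you say produces standard filtrations of the projective covers.
\item You do not address Deligne finiteness (enough projectives), which is needed before highest weight makes sense; the paper takes it from \cite[Proposition 4.8]{MR4349392}.
\end{itemize}

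Moreover, the ingredients you actually use for this step cannot suffice. These are a highest weight subheart, gluing along one more exceptional object, and faithfulness of $\cat{H}$ in $\cat{D}$. In \cref{middle_perv_CP2} the heart $\cat{H}'$ is faithful and glued along $(\ob{S}_0',\ob{S}_1',\ob{P}_2')$. Its subheart $\Filt{\ob{S}_0',\ob{S}_1'}$ is highest weight, being modules over the path algebra of $0'\to 1'$. Yet $\cat{H}'$ is not highest weight.

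The missing idea is that faithfulness at the intermediate levels must feed into the highest weight property, not into faithfulness. Take $\ob{H},\ob{H}'\in\cat{H}_{r-1}=\cat{H}\cap\cat{D}_{r-1}$. Fullness of $\cat{D}_{r-1}\subset\cat{D}_r$ together with faithfulness at levels $r-1$ and $r$ gives
\[
\Ext{\cat{H}_{r-1}}{k}{\ob{H}}{\ob{H}'}\cong\Ext{\cat{D}_{r-1}}{k}{\ob{H}}{\ob{H}'}\cong\Ext{\cat{D}_{r}}{k}{\ob{H}}{\ob{H}'}\cong\Ext{\cat{H}_{r}}{k}{\ob{H}}{\ob{H}'}.
\]
So each Serre inclusion $\cat{H}_{r-1}\hookrightarrow\cat{H}_r$ is homological. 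Gluing along an exceptional collection gives $\cat{H}_r/\cat{H}_{r-1}\simeq\vect{\kk}$. Krause's criterion \cref{thm : criterion_hw} then yields highest weight immediately, for the ordering of simples read off from the chain $\cat{H}_0\subset\cdots\subset\cat{H}_n$.

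This homological embedding is exactly what fails in the example above. There $\Filt{\ob{S}_0',\ob{S}_1'}$ is not faithful in $\thick{}{\ob{S}_0',\ob{S}_1'}$, while $\cat{H}'$ is faithful in $\cat{D}$, so $\Filt{\ob{S}_0',\ob{S}_1'}\hookrightarrow\cat{H}'$ is not homological. Any repair of your recollement induction would have to prove this homological embedding at the top step, using the faithfulness of $\cat{H}_{n-1}$ in $\cat{D}_{n-1}$ that you currently spend on a redundant faithfulness argument.
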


Examples of serially faithful perverse hearts (with respect to the admissible thick filtration associated to any filtration  by closed unions of strata) are given by the faithful hearts studied in \cite{MR2119139,MR1322847} and \cite{MR1453053}. We provide an example of a faithful glued heart inside the constructible derived category which is not highest weight and therefore not serially faithful (for any ordering of the simple objects). However, this heart is not a category of perverse sheaves for any perversity and we do not know an instance of a faithful perverse heart which is not serially faithful. 

When $\cat{H}$ is a category of perverse sheaves inside the constructible derived category of a topologically stratified space the conditions of \cref{thmA} can be interpreted topologically to  characterise faithful perverse highest weight hearts in terms of the exactness of certain functors, or in terms of the vanishing of certain cohomology groups of pairwise links.

\begin{thmx}[\cref{thm:fhwp}]
\label{thmB}
Let $X$ be a topologically stratified space with finitely many contractible strata and denote by $\imath_S\colon S\to X$ the inclusion of a stratum into $X$.  The following are equivalent:
\begin{enumerate}
\item the category $\perv{p}{X}$ is faithful highest weight for some (hence any) total order refining the partial order of strata;
\item for any stratum $S\subset X$ the functors ${\imath_S}_*$ and ${\imath_S}_!$ are t-exact;
\item for any pair of distinct strata $S\subset\overline{T}$ the pairwise link $L_{S<T}$ satisfies 
\[
H^{>p(S)-p(T)}(L_{S<T})=0\ \text{and}\ H^{<p(S)-p(T)-1}_c(L_{S<T})=0.
\]

\end{enumerate}
\end{thmx}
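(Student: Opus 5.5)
We will deduce the theorem from \cref{thmA}, made concrete for perverse sheaves. Since the strata are contractible, each stratum $S$ carries a unique simple local system, the constant sheaf $\kk_S$. The standard and costandard objects for a total order refining the closure order are therefore
\[
\stan{}{S}={\imath_S}_!\kk_S[\dim S'] \quad\text{and}\quad \costan{}{S}={\imath_S}_*\kk_S[\dim S'],
\]
where the shift is chosen so that $\kk_S$ sits in the perverse heart of $S$ (shift by $-p(S)$ in the paper's conventions). With this description the plan is to prove (1)$\Leftrightarrow$(2) and (2)$\Leftrightarrow$(3) separately.

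For (1)$\Leftrightarrow$(2) we use condition (2) of \cref{thmA}. On $\constr{c}{X}$ the objects ${\imath_S}_!\kk_S$ always form a full exceptional collection with dual collection ${\imath_S}_*\kk_S$, whatever the perversity. This follows from adjunction: $\mathrm{Hom}({\imath_S}_!\kk_S,{\imath_T}_*\kk_T[k])=\mathrm{Hom}(\imath_T^*{\imath_S}_!\kk_S,\kk_T[k])$, which vanishes for $S\neq T$ and equals $H^k(S)$, concentrated in degree $0$ by contractibility, when $S=T$. Fullness holds because the constant sheaves on strata generate $\constr{c}{X}$ via the gluing triangles. So (1) holds exactly when these objects lie in the heart $\perv{p}{X}$.

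It remains to match this with the t-exactness in (2).
\begin{itemize}
\item[(2)$\Rightarrow$(1)] If ${\imath_S}_!$ and ${\imath_S}_*$ are t-exact, they send the perverse object $\kk_S[\,\cdot\,]$ into the heart.
\item[(1)$\Rightarrow$(2)] The functor ${\imath_S}_!$ is always right t-exact and ${\imath_S}_*$ always left t-exact. Every perverse sheaf on the contractible stratum $S$ has a finite filtration by copies of $\kk_S$, because local systems on $S$ are trivial. Since the heart is closed under extensions, the objects ${\imath_S}_!(\text{heart})$ and ${\imath_S}_*(\text{heart})$ lie in $\perv{p}{X}$. Hence both functors are t-exact.
\end{itemize}

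For (2)$\Leftrightarrow$(3) we compute stalks and costalks. The object ${\imath_S}_*\kk_S[-p(S)]$ is perverse iff two things hold: its stalk cohomology at each stratum $T$ lies in degrees $\le p(T)$, and its costalk cohomology lies in degrees $\ge p(T)$. Costalks of ${\imath_S}_*$ vanish on strata other than $S$, and $S$ itself is fine. The stalk at a point $x\in T$ with $T<S$ is $H^*(L_{T<S})[-p(S)]$, where $L_{T<S}$ is the pairwise link. This is exactly the local computation used to define pairwise links: near $x$, the link of $T$ intersected with $S$. The condition is therefore $H^{j}(L_{T<S})=0$ for $j>p(T)-p(S)$.

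Dually, the stalks of ${\imath_S}_!\kk_S[-p(S)]$ vanish off $S$. Its costalk at $x\in T$ is $\imath_x^!{\imath_S}_!\kk_S$, which is computed by compactly supported cohomology of the pairwise link, with a shift by the codimension-type degree coming from the normal slice. Requiring this costalk to lie in degrees $\ge p(T)$ gives $H_c^{j}(L_{T<S})=0$ for $j<p(T)-p(S)-1$. The $-1$ comes from the boundary map in the long exact sequence of the cone of the link, which relates $H^*_c$ of the open cone to $H^*_c$ of the link. Renaming $T,S$ as $S,T$ gives (3).

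The main obstacle is getting these degree shifts exactly right. This requires the paper's precise definition of the pairwise link $L_{S<T}$ and its normalisation of perverse degrees. For the costalk of ${\imath_S}_!$ in particular, we would check the identification $\imath_x^!{\imath_S}_!\kk\cong R\Gamma_c(\text{cone}^\circ(L))$ against the local model $\R^k\times \mathrm{cone}(L)$.
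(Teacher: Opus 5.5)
Your arguments for (2)$\Rightarrow$(1) and (2)$\Leftrightarrow$(3) are essentially the paper's, and your indices are right. The costalk $\imath_x^!{\imath_S}_!\kk_S$ at $x\in T$ has cohomology $H_c^{*-1-\dim T}(L_{T<S})$: the $-1$ comes from the radial factor $(0,1)$ of the deleted cone, and the $-\dim T$ disappears when you pass from $\imath_x^!$ to $\imath_T^!$.

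The gap is in (1)$\Rightarrow$(2). You begin by asserting that, because the strata are contractible, $\stan{}{S}\cong{\imath_S}_!\kk_S[-p(S)]$ and $\costan{}{S}\cong{\imath_S}_*\kk_S[-p(S)]$. This is false in general. $\stan{}{S}$ is the projective cover of $\ob{S}_S$ among perverse sheaves supported on $X_S$, i.e.\ the zeroth perverse cohomology of ${\imath_S}_!\kk_S[-p(S)]$, and the two agree only when the latter is already perverse. Take constructible sheaves on $\C\PP^1$ stratified by a point and $\C$: the strata are contractible and the heart is highest weight. Yet the costandard object of $\C$ is $\kk_{\C\PP^1}$, whereas ${\imath_{\C}}_*\kk_{\C}$ has stalk cohomology $H^*(S^1)$ at the point and is not perverse. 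So your claim that (1) holds \emph{exactly} when these objects lie in the heart is only established in the ``if'' direction. Your (1)$\Rightarrow$(2) bullet presupposes precisely what has to be proved, namely that ${\imath_S}_!\kk_S[-p(S)]$ and ${\imath_S}_*\kk_S[-p(S)]$ are perverse. The example shows that faithfulness must be used at this step.

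The paper closes this by citing \cite{Bodzenta:2026aa} for the identification of the (co)standard objects under (1). You can also argue directly from \cref{thm:fhw}: under (1) the standard objects form a full exceptional collection, and this is where faithfulness enters.
\begin{itemize}
\item The triangles $\ob{K}_S\to\stan{}{S}\to\ob{S}_S$, with $\ob{K}_S$ supported on the union $X_{<S}$ of the preceding strata, give $\thick{}{\stan{}{S'} : S'\leq S}=\constr{c}{X_S}$.
\item Exceptionality then puts $\stan{}{S}$ in the left orthogonal of $\constr{c}{X_{<S}}$ inside $\constr{c}{X_S}$, which is the essential image of ${\imath_S}_!$.
\item Hence $\stan{}{S}\cong{\imath_S}_!V$ with $V\in\constr{c}{S}\simeq\der{b}{\kk}$ exceptional and lying in the heart $\loc{S}[-p(S)]$. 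This forces $V\cong\kk_S[-p(S)]$.
\item Dually, exceptionality of $(\costan{}{n},\ldots,\costan{}{0})$ puts $\costan{}{S}$ in the right orthogonal of $\constr{c}{X_{<S}}$, the essential image of ${\imath_S}_*$, so $\costan{}{S}\cong{\imath_S}_*\kk_S[-p(S)]$.
\end{itemize}
With this in place your filtration argument finishes (1)$\Rightarrow$(2). Since (2) does not refer to the order, it also yields the ``hence any'' clause.
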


The equivalence of $(2)$ and $(3)$ (expressed as the vanishing of certain cohomology sheaves) and the fact that they imply $(1)$ can be found in \cite{ParshallScott}, with a different proof. The other implication is new so far as we are aware. We note that if $X$ has finitely many 1-connected strata with second vanishing cohomology and either the second or the third condition of \cref{thmB} are satisfied, then $\perv{p}{X}$ is highest weight but not necessarily faithful, see \cite{ParshallScott}. The converse of this result does not hold; see \cref{constr_sheaves_P1} for a counterexample. This reflects the fact that the faithfulness assumption allows one to interpret topological information on perverse sheaves algebraically, and vice-versa.

We further exploit the tight connection between algebra and topology for faithful perverse hearts in order to obtain cohomological vanishing results. These can be used to bound the global dimension of categories of perverse sheaves by the dimension of the space, and in the case of faithful highest weight perverse hearts also in terms of the poset of strata. 

\begin{thmx}[\cref{on gl dim} and \cref{cor: gldim depth bound}]
\label{thmC}
Let X be a topologically stratified space with finitely many strata and p a GM-perversity. For any $\sh{E},\sh{F}\in\perv{p}{X}$ we have
\begin{equation*}
\Ext{\constr{c}{X}}{k}{\sh{E}}{\sh{F}}=0\ \ \mbox{if}\ k<0 \ \text{or}\  k>\dim(X).  
\end{equation*}
Moreover,  $\gdim{\perv{p}{X}}\leq\dim(X)$ when $\perv{p}{X}$ is faithful, and $\gdim{\perv{p}{X}}\leq 2 \depth{X}$ when it is faithful highest weight.
\end{thmx}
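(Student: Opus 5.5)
The plan is to prove the vanishing of $\mathrm{Ext}^k$ by induction on the number of strata, using the recollement attached to an open stratum. Vanishing for $k<0$ is automatic, since $\perv{p}{X}$ is the heart of a t-structure. For the upper bound, first treat the case where $\sh{E}$ and $\sh{F}$ are simple. This suffices: every perverse sheaf has finite length, because there are finitely many strata and, in the setting where we count, local systems of finite rank. The long exact sequences of $\mathrm{Ext}$ then propagate vanishing in degrees $>\dim X$ from simples to arbitrary objects. Simple objects have the form $\imath_{!*}\mathcal{L}[p(S)]$ for a stratum $S$ and an irreducible local system $\mathcal{L}$ on $S$.

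For the inductive step, let $U$ be an open stratum of maximal dimension, with $j\colon U\to X$ and complement $i\colon Z\to X$. Use the triangle $j_!j^*\sh{F}\to\sh{F}\to i_*i^*\sh{F}$, or its dual version with $i^!$ and $j_*$, to reduce $\mathrm{Hom}(\sh{E},\sh{F}[k])$ to two kinds of groups. The first kind is $\mathrm{Hom}(i^*\sh{E},i^!\sh{F}[k])$ on $Z$; the second is $\mathrm{Hom}(j^*\sh{E},j^*\sh{F}[k])=H^{k}(U;\mathcal{H}om)$ on $U$. The group on $U$ vanishes for $k>\dim U$, since $U$ is a manifold of dimension at most $\dim X$. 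On $Z$, the GM conditions on $p$ control the amplitude of $i^*\sh{E}$ and $i^!\sh{F}$: the stalk cohomology of $i^*\sh{E}$ at a point of a stratum $S\subset Z$ lies in degrees $\le -p(S)-1$, and the costalks of $i^!\sh{F}$ lie in degrees $\ge -p(S)+1$. The growth condition $0\le p(S)-p(T)\le \dim T-\dim S$ is what bounds the total shift by $\dim X$. This is the \textbf{main obstacle}: it requires a careful bookkeeping of the perverse amplitudes against dimensions. I would organise it by the spectral sequence for the stratification filtration of $X$, giving
\[
\mathrm{Ext}^k(\sh{E},\sh{F}) \Leftarrow \bigoplus_{S} H^{a}\big(S;\,\mathcal{H}^b(\imath_S^!\,\mathcal{RH}om(\sh{E},\sh{F}))\big).
\]
The $E_1$-term vanishes unless $a\le\dim S$. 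The support and cosupport conditions of $\sh{E},\sh{F}$ give $b\le \dim X-\dim S$ for $\mathcal{H}^b(\imath_S^!\mathcal{RH}om(\sh{E},\sh{F}))$; this is the GM-perversity estimate obtained from the link of $S$, which has dimension $\dim X-\dim S-1$. Since $a+b\le\dim X$, the whole abutment vanishes above $\dim X$.

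Once the Ext bound is established, the global dimension statements follow. If $\perv{p}{X}$ is faithful, then $\mathrm{Ext}^k_{\perv{p}{X}}=\mathrm{Ext}^k_{\constr{c}{X}}$, so $\gdim{\perv{p}{X}}\le\dim X$. In the faithful highest weight case, \cref{thmA} shows that the standard objects form a full exceptional collection with costandard dual. The objects $\stan{}{S}$ and $\costan{}{S}$ are ${\imath_S}_!$ and ${\imath_S}_*$ applied to a shifted constant sheaf, and these functors are t-exact by \cref{thmB}. Projective dimensions can then be bounded by a chain argument. Each step $S<T$ in the poset contributes at most $2$: this comes from the two nonzero cohomological degrees allowed on the pairwise link $L_{S<T}$ by condition (3) of \cref{thmB}. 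Induction along chains of length at most $\depth X$ then yields $\gdim{\perv{p}{X}}\le 2\depth{X}$, using the standard highest weight fact that the projective dimension of a simple object is bounded by the length of $\Delta$-filtrations accumulated along the poset.
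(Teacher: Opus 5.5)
\textbf{Assessment.} Your framework for the vanishing statement matches the paper's, but the key estimate is asserted rather than proved. The justification you give for the factor $2$ in the depth bound is also incorrect.

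\textbf{The vanishing statement.} The paper inducts on the number of strata, removing a closed stratum $S$ via the triangle $\imath_*\imath^!\sh{F}\to\sh{F}\to\jmath_*\jmath^*\sh{F}$. Your stratification spectral sequence is that induction unrolled, since $\imath_S^!R\mathcal{H}om(\sh{E},\sh{F})\cong R\mathcal{H}om(\imath_S^*\sh{E},\imath_S^!\sh{F})$. The deduction in the faithful case is identical. Reducing to simple objects is harmless but unnecessary.

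The gap is the one estimate that carries all the content: $\mathcal{H}^b R\mathcal{H}om(\imath_S^*\sh{E},\imath_S^!\sh{F})=0$ for $b>\codim S$. You point to the link, which is the right place, but what you actually invoke does not give this. The support and cosupport conditions at $S$ only say that $\imath_S^*\sh{E}$ has cohomology in degrees $\le p(S)$ and $\imath_S^!\sh{F}$ in degrees $\ge p(S)$. That yields $b\ge 0$ and nothing more.

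Your strict versions with $\pm1$ do not rescue this. They characterise intermediate extensions from the open stratum, and fail for a simple object supported on $\overline{S}$, e.g.\ a skyscraper at a point stratum. In any case they bound the wrong ends. An upper bound on $b$ needs $\imath_S^*\sh{E}$ bounded \emph{below} and $\imath_S^!\sh{F}$ bounded \emph{above}.

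\textbf{What is missing.} Those bounds come from the triangle $\imath_S^!\sh{F}\to\imath_S^*\sh{F}\to\imath_S^*\jmath_*\jmath^*\sh{F}$, whose third term has stalks $H^*(L;\sh{F}|_L)$. You then need two ingredients that are absent from your proposal.
\begin{enumerate}
\item $\sh{F}|_L$ is perverse on the link for the induced perversity $\pbar(L_{S<T})=p(T)$. This uses that the link section $\sigma$ is normally nonsingular, so $\sigma^!$ and $\sigma^*$ differ by a shift.
\item On a compact stratified space with $\pbar$ and $\pbar^*$ both decreasing, the hypercohomology of a perverse sheaf vanishes outside $[\pbar(L),-\pbar^*(L)]$. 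This is proved by adjunction for $\pi\colon L\to\pt$, after checking that $\pi^*$ sends $\cat{D}^{<\pbar(L)}(\pt)$ into ${}^{\pbar}\cat{D}^{<0}(L)$, and dually for $\pi^!$.
\end{enumerate}
These are \cref{shifted loc sys} and \cref{prop:cohocptspace}. Together they give $H^k(L;\sh{F}|_L)=0$ outside $[p(X),\,p(X)+\codim S-1]$, where $p(X)$ is the value on top-dimensional strata. Hence $\imath_S^*\sh{E}$ lives in degrees $[p(X),p(S)]$. Likewise $\imath_S^!\sh{F}$ lives in degrees $[p(S),\,p(X)+\codim S]$; this step uses the GM growth condition $p(S)-p(X)\le\codim S$. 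Only then do you get $0\le b\le\codim S$, and with $a\le\dim S$ the bound $a+b\le\dim X$.

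\textbf{The depth bound.} Attributing the factor $2$ to ``two nonzero cohomological degrees on $L_{S<T}$'' is wrong. Condition (3) of \cref{thmB} allows $H^*(L_{S<T})$ in every degree from $0$ to $p(S)-p(T)$, and $H^*_c(L_{S<T})$ in every degree from $p(S)-p(T)-1$ up to $\dim L_{S<T}$. So no link-by-link count of this kind is available.

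The paper's argument is purely algebraic, and it is what your last sentence gestures at. $\perv{p}{X}$ is highest weight for every total order refining the poset of strata, and the standard objects do not depend on the refinement. So the general bound for highest weight categories applies: global dimension is at most twice the height of an adapted poset (Dlab--Ringel). That bound comes from chain-length bounds on projective dimensions of standard objects and injective dimensions of costandard objects. Here the height is $\depth{X}$. The link heuristic and the t-exactness of ${\imath_S}_!$ and ${\imath_S}_*$ play no role in this step.
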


The difference between the two bounds reflects the different proof strategies: the first uses an inductive argument and vanishing results for the hypercohomology of perverse sheaves, while the second uses \cref{thmB}. In Section~\ref{satisfaction_gd_bounds} we investigate lower bounds for the global dimension and conditions under which the above bounds  are achieved. We note that in the setting of \cref{thmC} it follows that the constructible derived category $\constr{c}{X}$ admits a Serre functor which, in the special case of the complex projective space with affine stratification and the middle perversity can be described as a $\mathbb{P}$-twist \cite{Huybrechts_2006} at the simple perverse sheaf corresponding to the top dimensional stratum, see \cite{Bonfert:2025aa}.

In Section~\ref{hypercoho_computations} we make use of the faithfulness assumption on $\perv{p}{X}$ to compute topological invariants algebraically, and conversely to compute algebraic data topologically.  When $X$ is a topologically stratified space with finitely many strata, each with finite fundamental group, $\perv{p}{X}$ has enough injective and projective objects by \cite{MR4349392}. Moreover, if $\perv{p}{X}$ is faithful in $\constr{c}{X}$, the global dimension of the perverse heart is finite by \cref{thmC}. Therefore projective resolutions of objects in $\perv{p}{X}$ exist and are finite. We show that the hypercohomology of a perverse sheaf can be computed from a projective resolution of the constant sheaf, and conversely that the multiplicities of the terms appearing in a minimal projective resolution can be computed as intersection cohomology groups.

\begin{thmx}[\cref{prop:hypercoh} and \cref{cor:computation_cohom}]
\label{thmD}
Suppose $X$ is a topologically stratified space with finitely many strata, each with finite fundamental group. Let $p$ be a perversity on $X$ such that $\perv{p}{X}$  is a faithful heart in $\constr{c}{X}$. Then:
\begin{enumerate}
\item The complex of morphisms from a projective resolution of the constant sheaf to a perverse sheaf $\sh{E}$ computes the hypercohomology of $\sh{E}$.
\item When a shift of the constant sheaf is perverse the multiplicities of  indecomposable projectives in its {\em minimal} projective resolution are intersection Betti numbers. 
\end{enumerate}
There are dual results describing the hypercohomology in terms of an injective resolution of the constant sheaf, and the multiplicity of each injective hull in a minimal injective resolution when the constant sheaf is shifted perverse. 
\end{thmx}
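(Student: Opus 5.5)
Faithfulness gives a triangulated equivalence $\constr{c}{X}\simeq\der{b}{\mod{\palg{p}{X}}}$ extending $\perv{p}{X}\simeq\mod{\palg{p}{X}}$. The plan is to transport the computation of hypercohomology into this derived category of modules and evaluate it with resolutions there.

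For part (1), the starting point is the formula
\[
\mathbb{H}^k(X;\sh{E}) \cong \Mor{\constr{c}{X}}{\kk_X}{\sh{E}[k]},
\]
where $\kk_X$ is the constant sheaf. The constant sheaf is constructible, so it lies in $\constr{c}{X}$. Under the equivalence it becomes a bounded complex of modules. By \cref{thmC} the global dimension of $\perv{p}{X}$ is finite, and it has enough projectives by \cite{MR4349392}. Hence $\kk_X$ is quasi-isomorphic to a bounded complex $P^\bullet$ of projective perverse sheaves. I would build $P^\bullet$ by resolving each perverse cohomology object $\pfun{p}{H}^i(\kk_X)$ and gluing the resolutions with the usual Cartan--Eilenberg/horseshoe argument, or equivalently by taking a K-projective replacement in $\der{b}{\mod{\palg{p}{X}}}$.

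For a perverse $\sh{E}$ and a bounded complex of projectives $P^\bullet$, derived Hom is computed by the Hom complex:
\[
\Mor{\der{b}{}}{P^\bullet}{\sh{E}[k]}=H^k\,\mathrm{Hom}^\bullet_{\perv{p}{X}}(P^\bullet,\sh{E}).
\]
Here faithfulness is exactly what identifies $\Mor{\constr{c}{X}}{}{}$ with $\Mor{\der{b}{\perv{p}{X}}}{}{}$. This proves (1). The dual statement follows the same way from $\Mor{}{\sh{E}}{\mathbb{D}\kk_X[-k]}$ and an injective resolution, or by applying Verdier duality.

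For part (2), suppose $\kk_X[d]$ is perverse and take its minimal projective resolution $P^\bullet$, with $P^{-j}=\bigoplus_{S,L} P_{S,L}^{\,m_{j}(S,L)}$. Here $P_{S,L}$ is the projective cover of the simple object $IC(S,L)$. Minimality means every differential of $\mathrm{Hom}(P^\bullet,IC(S,L))$ vanishes. The reason is that any map $P^{-j}\to IC$ factoring through $P^{-j+1}$ factors through the radical, and maps from the radical to a simple are zero. Consequently
\[
\mathrm{Hom}(P^{-j},IC(S,L))=H^{j}\big(\mathrm{Hom}^\bullet(P^\bullet,IC(S,L))\big).
\]
The left side is $m_j(S,L)$ times $\dim\End{IC(S,L)}$, or equals the multiplicity times the dimension of $L$, depending on the normalisation. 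By part (1), with the shift by $d$ tracked, the right side is the intersection cohomology group $IH^{j-d}(X;L)$, up to the appropriate indexing. Reading off dimensions gives the multiplicities as intersection Betti numbers. The injective version is dual, using injective hulls $I_{S,L}$ and $\mathrm{Hom}(IC,I^\bullet)$.

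I expect the main obstacle to be bookkeeping rather than substance. The simple objects need not be absolutely simple, since their endomorphism rings are division algebras when local systems are nontrivial. So the identification $\dim\mathrm{Hom}(P_{S,L},IC(S',L'))=\delta\cdot\dim\End{IC}$ must be stated carefully, and the shift and sign conventions relating $\mathbb{H}^*(X;IC)$ to intersection cohomology must be matched correctly. The vanishing of the differentials under minimality is the key conceptual step. I would verify it first via the radical argument.
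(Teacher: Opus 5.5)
Your proposal is correct and follows essentially the paper's route: identify $H^r(X;\sh{E})$ with $\Ext{\constr{c}{X}}{r}{\kk_X}{\sh{E}}$, use faithfulness to pass to $\der{b}{\perv{p}{X}}$ and compute with a bounded projective resolution of $\kk_X$; then, for (2), use that $\mor{}{\sh{P}^*}{\sh{E}}$ has zero differential when $\sh{E}$ is simple and $\sh{P}^*$ is minimal. Two remarks on phrasing. The zero-differential step should read: minimality gives $d(\sh{P}^{-j})\subseteq \mathrm{rad}\,\sh{P}^{-j+1}$, and every map from $\sh{P}^{-j+1}$ to a simple kills the radical. Your sentence ``maps from the radical to a simple are zero'' is false as written. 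Your caveat about simples whose endomorphism rings are non-trivial division algebras is a fair refinement of the paper's $\kk^a$ formulation.
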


We apply the results of Section~\ref{hypercoho_computations} when $X$ is the geometric realisation of a finite simplicial complex $K$. In \cref{proj_res_simplicial} we show the constant sheaf on $X$ admits a finite projective resolution $\sh{P}^*$ indexed by increasing perverse dimension as introduced by MacPherson, see \cite[\S23.6]{goresky}. \cref{cor:cellular perverse sheaves} shows that $\mor{}{\sh{P}^*}{\sh{E}}$ gives the quiver representation of a perverse sheaf $\sh{E}$, equivalently the cellular perverse sheaf description of \cite{goresky}. 

Finally, we point out another consequence of the presence of a faithful perverse heart that we do not further explore here. When $X$ has finitely many strata, each with finite fundamental group, and there is a faithful perverse heart $\perv{p}{X}$, then all the other length hearts in $\constr{c}{X}$ correspond to silting objects in the bounded homotopy category of projective perverse sheaves by the K\"onig--Yang correspondence \cite{MR3178243}. It would be interesting to understand the special features of the silting objects corresponding to perverse hearts, and faithful and serially faithful hearts.

\subsection*{Acknowledgments} The first author was funded by MUR PNRR-Seal of Excellence, CUP B37G22000800006, by the project funded by NextGenerationEU under NRRP, Call PRIN 2022 No. 104 of February 2, 2022 of Italian Ministry of University and Research; Project 2022S97PMY Structures for Quivers, Algebras and Representations (SQUARE), and by the project LAVIE --- Large views of small phenomena: decompositions, localizations and representation type --- Fondo Italiano per la Scienza FIS-2021 funded by of Italian Ministry of University and Research. A.C is member of the ``National Group for Algebraic and Geometric Structures, and their Applications" (GNSAGA - INdAM). Thanks to Lukas Bonfert for helpful comments.

\section{Categorical Preliminaries}
\label{sec:prelims} 

This section surveys the necessary background material on triangulated categories and hearts, introducing our notation and conventions. The material is almost all well-known, with the exception of the notion of serially faithful heart.

\subsection{Conventions and notation}

We work over a field $\kk$. Let $\cat{D}$ be a Hom-finite $\kk$-linear triangulated category with shift functor $[1]$. We set $\Ext{\cat{D}}{m}{\ob{D}}{\ob{D}'} \coloneqq \Mor{\cat{D}}{\ob{D}}{\ob{D'}[m]}$. 

By subcategory we mean strictly full subcategory, \ie full and closed under isomorphisms. A subcategory of $\cat{D}$ is \defn{triangulated} if it is closed under shift and extensions, and \defn{thick} if in addition it is closed under taking direct summands. For a subcategory $\cat{C}\subset \cat{D}$ we denote  the smallest triangulated and thick subcategories of $\cat{D}$ containing $\cat{C}$ respectively by $\triang{}{\cat{C}}$ and $\thick{}{\cat{C}}$. We let $\cat{C}\orth$ and $\orth\cat{C}$  be the subcategories on objects $\ob{D}\in \cat{D}$ with respectively $\Mor{\cat{D}}{\ob{C}}{\ob{D}}=0$ and $\Mor{\cat{D}}{\ob{D}}{\ob{C}}=0$ for all $\ob{C}\in \cat{C}$. 

For subcategories $\cat{B},\cat{C} \subset \cat{D}$ we let $\cat{B}*\cat{C}$ be the subcategory of all $\ob{D}\in \cat{D}$ sitting in a triangle $\ob{B} \to \ob{D}\to \ob{C} \to \ob{B}[1]$ with $\ob{B}\in \cat{B}$ and $\ob{C}\in \cat{C}$. A \defn{semi-orthogonal decomposition} $\langle \cat{B},\cat{C}\rangle$ of $\cat{D}$ is a pair of thick subcategories such that $\cat{B} = \orth\cat{C}$ and $\cat{D}=\cat{B}*\cat{C}$. 

\subsection{Hearts and t-structures}
\label{sec:hearts}

A \defn{t-structure} on the triangulated category $\cat{D}$ is a pair $(\cat{D}^{\leq 0},\cat{D}^{\geq 0})$ of additive subcategories such that the \defn{aisle} $\cat{D}^{\leq 0}$ is closed under $[1]$, the \defn{coaisle} $\cat{D}^{\geq 0} = (\cat{D}^{< 0})\orth$, and $\cat{D} = \cat{D}^{<0} *\cat{D}^{\geq 0}$. Here we use the notation $\cat{D}^{\leq n} \coloneqq \cat{D}^{\leq 0}[-n]$ and so forth. The inclusion $\cat{D}^{\leq 0} \hookrightarrow \cat{D}$ has right adjoint $\tau^{\leq 0}$, and $\cat{D}^{\geq 0} \hookrightarrow \cat{D}$ has left adjoint $\tau^{\geq 0}$.

The \defn{heart} of the t-structure is $\cat{H}\coloneqq \cat{D}^{\leq 0} \cap \cat{D}^{\geq 0}$. It is an abelian subcategory of $\cat{D}$. The t-structure is \defn{bounded} if $\cat{D} = \bigcup_{n\in \N} \cat{H}[n] * \cdots * \cat{H}[-n]$ in which case the heart determines the t-structure. Henceforth by heart we will always mean heart of a bounded t-structure.

\subsection{Thick subcategories and recollements}
\label{sec:thick subcats}

Let $\cat{D}$ be a triangulated category and $\cat{C} \subset \cat{D}$ a thick subcategory. If the subcategory is \emph{admissible}, \ie the inclusion $\imath_*$ has both left and right adjoints $\imath^*$ and $\imath^!$, then it determines a \emph{recollement}
\[
\begin{tikzcd} [row sep={0.6cm},column sep={0.6cm}]\label{equation:6ff}
\cat{C}\ar[rr,"\imath_*","\perp"']&&
\cat{D}\ar[ll,"\imath^!",bend left=35]\ar[ll,"\imath^*","\perp"', swap,bend right=50]\ar[rr,"\jmath^*","\perp"']&&
\cat{D}/\cat{C}.\ar[ll,"\jmath_*",bend left=35]\ar[ll,"\jmath_!","\perp"', swap,bend right=50]
\end{tikzcd}
\]
The left and right adjoints $\jmath_!$ and $\jmath^*$ to the quotient $\jmath^*\colon \cat{D} \to \cat{D}/\cat{C}$ are constructed using the fact that the restrictions $\jmath^*\colon \orth\cat{C} \to \cat{D}/\cat{C}$ and $\jmath^*\colon \cat{C}\orth \to \cat{D}/\cat{C}$ are equivalences \cite[Proposition 1.6]{MR1039961}. 
There are semi-orthogonal decompositions $ \langle \cat{C}, \cat{C}\orth\rangle =\cat{D} = \langle \orth\cat{C},\cat{C}\rangle$ arising from the adjunction triangles
\[
\imath_*\imath^! \ob{D} \to \ob{D} \to \jmath_*\jmath^*\ob{D} \to \imath_*\imath^!\ob{D}[1]
\quad \text{and} \quad
\jmath_!\jmath^!\ob{D} \to \ob{D} \to \imath_*\imath^*\ob{D} \to \jmath_!\jmath^!\ob{D}[1].
\]
Recall from \cite[Th\'eor\`eme 1.4.10]{BBD} that given t-structures on the admissible subcategory $\cat{C}$ and quotient $ \cat{D}/\cat{C}$ can be \defn{glued} to a t-structure on $\cat{D}$ whose  aisle is  the full subcategory
\[
\{ \ob{D} \in \cat{D} \mid \tau^{>0} \jmath^*\ob{D}=0 = \tau^{>0} \imath^*\ob{D} \}
\]
and whose co-aisle is $\{ \ob{D} \in \cat{D} \mid \tau^{<0} \jmath^*\ob{D}=0 = \tau^{<0} \imath^!\ob{D} \}$ where we abuse notation by using $\tau^{<0}$ and $\tau^{>0}$ to denote the truncation functors of the given t-structures in both $\cat{C}$ and $\cat{D}/\cat{C}$. The glued t-structure is bounded precisely when the given ones  are so.

The next result characterises glued hearts. We say that a semi-orthogonal decomposition $\cat{D} = \langle \cat{A} , \cat{B}\rangle$ restricts to a (not necessarily triangulated) subcategory $\cat{D}'$ if $\cat{D}' = (\cat{D}'\cap \cat{A}) * (\cat{D}'\cap\cat{B})$. 

\begin{proposition}
\label{prop: glued heart conditions}
The following are equivalent an admissible thick subcategory $\cat{C} \subset \cat{D}$:
\begin{enumerate}
\item The t-structure $(\cat{D}^{\leq 0},\cat{D}^{\geq 0})$ is glued from the induced recollement;
\item The functor $\jmath_!\jmath^*$ is right t-exact for the t-structure $(\cat{D}^{\leq 0},\cat{D}^{\geq 0})$;
\item The functor $\jmath_*\jmath^*$ is left t-exact  for the t-structure $(\cat{D}^{\leq 0},\cat{D}^{\geq 0})$;\item The semi-orthogonal decomposition $\cat{D}=\langle \orth\cat{C},\cat{C}\rangle$ restricts to the aisle $\cat{D}^{\leq 0}$;
\item The semi-orthogonal decomposition $\cat{D}=\langle \cat{C},\cat{C}\orth\rangle$ restricts to the co-aisle $\cat{D}^{\geq 0}$.
\end{enumerate}
\end{proposition}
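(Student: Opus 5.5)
Throughout, the glued t-structure is the one with aisle $\{\ob{D} : \jmath^*\ob{D}\in(\cat{D}/\cat{C})^{\leq 0},\ \imath^*\ob{D}\in \cat{C}^{\leq 0}\}$, where $\cat{C}$ and $\cat{D}/\cat{C}$ carry the induced t-structures. Via the equivalence $\jmath^*\colon \orth\cat{C}\to\cat{D}/\cat{C}$, these are the t-structures for which $\cat{C}^{\leq 0}=\cat{C}\cap\cat{D}^{\leq 0}$ and $(\cat{D}/\cat{C})^{\leq 0}=\jmath^*\cat{D}^{\leq 0}$. Statement (1) means that $(\cat{D}^{\leq 0},\cat{D}^{\geq 0})$ coincides with this glued t-structure, which includes requiring that the induced t-structures on $\cat{C}$ and $\cat{D}/\cat{C}$ exist.

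The plan is to prove $(1)\Rightarrow(2)\Leftrightarrow(4)\Rightarrow(1)$, and then obtain $(1)\Leftrightarrow(3)\Leftrightarrow(5)$ by the dual argument: replace aisles by co-aisles, $\jmath_!$ by $\jmath_*$, $\imath^*$ by $\imath^!$, and $\langle\orth\cat{C},\cat{C}\rangle$ by $\langle\cat{C},\cat{C}\orth\rangle$. Equivalently, apply the first chain to the opposite category, where these roles swap.

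\textbf{$(1)\Rightarrow(2)$.} This is standard from \cite[1.4]{BBD}. For a glued t-structure $\jmath_!$ is right t-exact and $\jmath^*$ is t-exact. Indeed, $\imath^*\jmath_!=0$ and $\jmath^*\jmath_!\simeq\id$, so $\jmath_!$ preserves the glued aisle. Hence $\jmath_!\jmath^*$ is right t-exact.

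\textbf{$(2)\Leftrightarrow(4)$.} For $\ob{D}\in\cat{D}^{\leq 0}$ use the triangle $\jmath_!\jmath^*\ob{D}\to\ob{D}\to\imath_*\imath^*\ob{D}\to$. Under (2) the first term lies in $\cat{D}^{\leq 0}$. The third term is then an extension of $\ob{D}$ and $(\jmath_!\jmath^*\ob{D})[1]$, so it lies in $\cat{D}^{\leq 0}$ since aisles are closed under extensions and $[1]$. This gives (4). Conversely, if $\ob{D}\in(\cat{D}^{\leq 0}\cap\orth\cat{C})*(\cat{D}^{\leq 0}\cap\cat{C})$, the triangle realising this is the functorial one by uniqueness of semi-orthogonal decomposition triangles. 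So $\jmath_!\jmath^*\ob{D}\in\cat{D}^{\leq 0}$, which is (2).

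\textbf{$(4)\Rightarrow(1)$.} This is the main step.
\begin{enumerate}
\item First produce t-structures on $\cat{C}$ and $\cat{D}/\cat{C}$. From (4), $\imath_*\imath^*$ preserves $\cat{D}^{\leq 0}$. Hence $\tau^{\leq 0}$ preserves $\cat{C}$: for $\ob{C}\in\cat{C}$, apply $\imath_*\imath^*$ to the truncation triangle. Since $\imath_*\imath^*$ restricts to the identity on $\cat{C}$, this compares the two triangles and shows $\tau^{\leq0}\ob{C}\simeq\imath_*\imath^*\tau^{\leq0}\ob{C}$ by orthogonality arguments. So $(\cat{C}\cap\cat{D}^{\leq0},\cat{C}\cap\cat{D}^{\geq0})$ is a t-structure on $\cat{C}$. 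Similarly $\jmath_!\jmath^*$ preserves the aisle, so $\orth\cat{C}\cap\cat{D}^{\leq 0}$ is the aisle of a t-structure on $\orth\cat{C}\simeq\cat{D}/\cat{C}$, with truncation $\jmath_!\jmath^*\tau^{\leq0}$.
\item Then compare aisles. The inclusion of $\cat{D}^{\leq0}$ into the glued aisle is immediate from (4), since $\imath^*$ and $\jmath_!\jmath^*$ preserve $\cat{D}^{\leq0}$. Conversely, if $\imath^*\ob{D}$ and $\jmath_!\jmath^*\ob{D}$ both lie in $\cat{D}^{\leq 0}$, then the triangle exhibits $\ob{D}$ as an extension of objects of $\cat{D}^{\leq0}$, so $\ob{D}\in\cat{D}^{\leq0}$.
\item Two t-structures with the same aisle coincide, which gives (1).
\end{enumerate}

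The expected obstacle is step 1: checking carefully that the restricted truncations really land in $\cat{C}$, respectively in $\orth\cat{C}$. I would handle this by showing that $\imath^*\tau^{\leq0}\ob{C}\to\ob{C}$ satisfies the universal property of $\tau^{\leq 0}\ob{C}$. To do this, I would use adjunction together with the vanishing $\Mor{}{\cat{D}^{\leq0}}{\cat{D}^{\geq1}}=0$.
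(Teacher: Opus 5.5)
Your argument is correct, but it is organised differently from the paper's.

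The paper cites \cite[Proposition 1.4.12]{BBD} for (1)$\Leftrightarrow$(2)$\Leftrightarrow$(3). It then proves (2)$\Leftrightarrow$(4) and (3)$\Leftrightarrow$(5) by exactly the triangle arguments you use for (2)$\Leftrightarrow$(4).

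You instead re-prove the non-trivial direction of the BBD result by going through (4):
\begin{enumerate}
\item You build the restricted t-structures on $\cat{C}$ and on $\orth\cat{C}\simeq\cat{D}/\cat{C}$. For the latter, $\jmath_!\jmath^*\tau^{\leq 0}$ is indeed right adjoint to the inclusion of $\orth\cat{C}\cap\cat{D}^{\leq 0}$, since $\Mor{}{\ob{Y}}{\jmath_!\jmath^*\ob{Z}}\cong\Mor{}{\ob{Y}}{\ob{Z}}$ for $\ob{Y}\in\orth\cat{C}$.
\item You compare aisles.
\item You obtain the (3)/(5) half by passing to $\cat{D}^{\mathrm{op}}$.
\end{enumerate}
The paper's route is shorter. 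Yours is self-contained and makes explicit which t-structures on the pieces are being glued.

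One detail in your step 1 needs care. Comparing the truncation triangle of $\ob{C}\in\cat{C}$ with its image under $\imath_*\imath^*$ does not work on its own. You do not know a priori that $\imath_*\imath^*\tau^{\geq 1}\ob{C}\in\cat{D}^{\geq 1}$, so you really need the universal-property argument you fall back on. A quick version runs as follows.
\begin{enumerate}
\item Put $\ob{A}=\tau^{\leq 0}\ob{C}$. The composite $\jmath_!\jmath^*\ob{A}\to\ob{A}\to\ob{C}$ vanishes because $\Mor{}{\orth\cat{C}}{\cat{C}}=0$.
\item By (2), $\jmath_!\jmath^*\ob{A}\in\cat{D}^{\leq 0}$. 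The universal property of $\ob{A}\to\ob{C}$ then forces the counit $\jmath_!\jmath^*\ob{A}\to\ob{A}$ to be zero.
\item Since $\jmath^*$ takes this counit to an isomorphism, $\jmath^*\ob{A}=0$, i.e.\ $\ob{A}\in\cat{C}$.
\end{enumerate}
With this filled in, your proof is complete.
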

\begin{proof}
Items (1), (2) and (3) are equivalent by \cite[Proposition 1.4.12]{BBD}. Considering the triangle $\jmath_!\jmath^!\ob{D} \to \ob{D} \to \imath_*\imath^*\ob{D} \to \jmath_!\jmath^!\ob{D}[1]$ for $\ob{D}\in \cat{D}^{\leq 0}$ shows that (2) and (4) are equivalent, and considering $\imath_*\imath^!\ob{D} \to \ob{D} \to \jmath_*\jmath^*\ob{D} \to \imath_*\imath^!\ob{D}[1]$ for $\ob{D}\in \cat{D}^{\geq 0}$ that (3) and (5) are equivalent.
\end{proof}

\subsection{Highest weight categories}\label{subsec:hwc}
Highest weight categories were introduced in \cite{Cline1988} as an abstraction of the representation theory of semisimple Lie algebras. We recall the definition in the special case of a Deligne finite category; see \cite{MR3742477} for a more general notion with coefficients in a division ring. 

An abelian category $\cat{A}$ is \defn{Deligne finite} if it is $\kk$-linear, $\mathrm{Hom}$ and $\mathrm{Ext}^1$-finite, length with finitely many iso-classes of simple objects, and has enough projective objects. It follows that $\cat{A}$ is equivalent to a module category over the endomorphism algebra of a projective generator, and therefore that $\cat{A}$ also has enough injectives and an injective cogenerator.

For a subset $\ob{A}$ of objects of $\cat{A}$ let $\Filt{\ob{A}}$ be the extension closure of the objects in $\ob{A}$.
Choose  a total ordering $\ob{S}_0,\ldots, \ob{S}_n$ of representatives of the iso-classes of simple objects in $\cat{A}$. The ordering  determines a filtration  $0=\cat{A}_{-1}\subset \cat{A}_0\subset\ldots\subset\cat{A}_n=\cat{A}$ by the Serre subcategories $\cat{A}_i = \Filt{ \ob{S}_0,\ldots,\ob{S}_i}$. Let $\ob{P}_i$ and $\ob{I}_i$ respectively be the projective cover and injective hull of $\ob{S}_i$ in $\cat{A}$.  The $i$-th \defn{standard object} $\stan{}{i}$ and \defn{costandard object} $\costan{}{i}$ are defined respectively as the maximal quotient of $\ob{P}_i$ and the maximal subobject of $\ob{I}_i$ in $\cat{A}_i$. These are respectively the projective cover and injective hull of $\ob{S}_i$ in $\cat{A}_i$.

The category $\cat{A}$ is \defn{highest weight with respect to the chosen ordering of its simple objects}  if $\EndA{\cat{A}}{\stan{}{i}}\cong\kk$ and $\ob{P}_i\in\Filt{\stan{}{i},\ldots,\stan{}{n}}$ for all $i\in\{0,\ldots,n\}$. We say $\cat{A}$ is \defn{highest weight} if it is so for some ordering. 

Note that we are assuming that the set of simple objects of $\cat{A}$ is totally ordered, but one could work with a partially ordered set of simple objects which is adapted to $\cat{A}$ in the sense of \cite{MR1211481}, see also \cite[\S5.2]{bodzenta} and \cite[Remark 2.4]{Cipriani:2024aa}.

Recall from \cite{MR3123754} that the inclusion $\cat{A}\hookrightarrow \cat{B}$ of a full abelian subcategory of an abelian category is  \defn{homological} if $\Ext{\cat{A}}{k}{\ob{A}}{\ob{A'}} \cong \Ext{\cat{B}}{k}{\ob{A}}{\ob{A'}}$ for all $\ob{A},\ob{A'}\in\cat{A}$ and $k\geq0$.

\begin{theorem}[{\cite[Theorem 3.4]{MR3742477}}]
\label{thm : criterion_hw}
A Deligne finite category $\cat{A}$ is highest weight if and only if there is a filtration $0=\cat{A}_{-1}\subseteq \cat{A}_0\subseteq \ldots \subseteq \cat{A}_n=\cat{A}$ by Serre subcategories such that the inclusions $\cat{A}_{r-1}\hookrightarrow \cat{A}_{r}$ are homological with $\cat{A}_{r}/\cat{A}_{r-1}\cong \vect{\kk}$ for $r=0,\ldots,n$.
\end{theorem}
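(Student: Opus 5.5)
Both directions are proved by induction on the number $n+1$ of simple objects. Throughout, take the filtration to be the one attached to the ordering, $\cat{A}_r=\Filt{\ob{S}_0,\ldots,\ob{S}_r}$. The tool linking the two conditions is the standard relation between recollements of abelian categories and idempotent quotients. Write $\cat{A}\simeq\mod{A}$ with $A=\End{\bigoplus_i \ob{P}_i}^{op}$, and let $e=e_n$ be the idempotent for $\ob{P}_n$. Then $\cat{A}_{n-1}\simeq \mod{A/AeA}$ and $\cat{A}/\cat{A}_{n-1}\simeq\mod{eAe}$. A classical criterion of Cline--Parshall--Scott (which I would reprove) says that the inclusion $\mod{A/AeA}\hookrightarrow\mod{A}$ is homological if and only if $AeA$ is a stratifying ideal, i.e. $Ae\otimes^{L}_{eAe}eA\to AeA$ is an isomorphism. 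Under the condition $eAe\cong\kk$ this reduces to $AeA\cong (Ae)^{\oplus m}$ being projective.

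For the forward direction, suppose $\cat{A}$ is highest weight for the given ordering. First note that $\stan{}{n}=\ob{P}_n$, since $\cat{A}_n=\cat{A}$. From $\EndA{\cat{A}}{\stan{}{n}}\cong\kk$ we get $eAe\cong\kk$, so $\cat{A}/\cat{A}_{n-1}\cong\vect{\kk}$. Next, the trace of $\ob{P}_n$ in each $\ob{P}_i$ is the $\stan{}{n}$-part of its standard filtration. By the usual Ext-vanishing $\Ext{}{1}{\stan{}{j}}{\stan{}{k}}=0$ for $j\ge k$, which follows from the projectivity of $\stan{}{j}$ in $\cat{A}_j$, this trace is a direct sum of copies of $\ob{P}_n$. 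Hence $AeA$ is projective and the inclusion $\cat{A}_{n-1}\hookrightarrow\cat{A}$ is homological. Finally, $\cat{A}_{n-1}$ is again highest weight, because $\ob{P}_i/\mathrm{tr}_{\ob{P}_n}(\ob{P}_i)$ is the projective cover of $\ob{S}_i$ in $\cat{A}_{n-1}$ and is filtered by $\stan{}{i},\ldots,\stan{}{n-1}$. Induction then gives the filtration; transitivity of homological inclusions is not needed, since each step is checked separately.

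For the converse, suppose the filtration exists. Each quotient $\cat{A}_r/\cat{A}_{r-1}\cong\vect{\kk}$, so each step adds exactly one simple. After relabelling we may therefore assume $\cat{A}_r=\Filt{\ob{S}_0,\ldots,\ob{S}_r}$. By induction, $\cat{A}_{n-1}$ is highest weight. For $\cat{A}$ we must show two things.
\begin{enumerate}
\item $\EndA{}{\stan{}{n}}=\EndA{}{\ob{P}_n}=eAe\cong\kk$. This follows from $\cat{A}/\cat{A}_{n-1}\cong\vect{\kk}$, since $\mod{eAe}\simeq\vect{\kk}$ forces the local algebra $eAe$ to be $\kk$.
\item Each $\ob{P}_i$ is filtered by standard objects. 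Here the homological hypothesis gives $\Ext{\cat{A}}{2}{\ob{M}}{\ob{N}}=0$ whenever $\ob{M}$ is projective in $\cat{A}_{n-1}$ and $\ob{N}\in\cat{A}_{n-1}$. Equivalently, the stratifying condition makes $AeA$ projective as a left module. Thus the kernel of $\ob{P}_i\to\ob{P}_i^{\cat{A}_{n-1}}$ is a sum of copies of $\ob{P}_n=\stan{}{n}$. The quotient is the projective cover of $\ob{S}_i$ in $\cat{A}_{n-1}$, which by induction is filtered by $\stan{}{i},\ldots,\stan{}{n-1}$; these are the same objects computed in $\cat{A}$ or in $\cat{A}_{n-1}$. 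Together these give the required standard filtration of $\ob{P}_i$.
\end{enumerate}

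The main obstacle is the equivalence between the homological inclusion and the statement that the trace ideal $AeA$ is projective, a sum of copies of $\ob{P}_n$. I would prove it via the $\mathrm{Ext}^2$ criterion for Serre subcategories: a Serre inclusion with enough projectives is homological if and only if $\Ext{\cat{B}}{k}{\ob{P}}{\ob{A}}=0$ for $k\ge1$ whenever $\ob{P}$ is projective in $\cat{A}$ and $\ob{A}\in\cat{A}$. I would then apply this criterion to the short exact sequence $0\to AeA\cdot\ob{P}_i\to\ob{P}_i\to\ob{P}_i^{\cat{A}_{n-1}}\to0$, using $eAe=\kk$.
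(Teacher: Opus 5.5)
Your proof is correct, but it does not follow the argument the paper relies on. The paper gives no proof of its own and simply imports \cite[Theorem 3.4]{MR3742477}. There Krause argues intrinsically with recollements of abelian categories, and allows the subquotients to be module categories over division rings.

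You instead fix a basic algebra $A$ with $\cat{A}\simeq\mod{A}$. You then translate ``$\cat{A}_{n-1}\hookrightarrow\cat{A}$ is homological with quotient $\vect{\kk}$'' into ``$AeA$ is a heredity ideal''. The theorem thereby becomes the classical correspondence between highest weight categories and quasi-hereditary algebras with heredity chains \cite{Cline1988}.

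What each route buys:
\begin{itemize}
\item Your route makes the induction completely explicit. It shows exactly where $\mathrm{Ext}^1$-vanishing between standard objects is used (forward direction) and where $eAe\cong\kk$ is used (converse).
\item Krause's route needs no choice of projective generator and yields the more general division-ring statement.
\end{itemize}

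One step in the converse deserves a line, since ``equivalently'' passes over it. The homological hypothesis gives
$\mathrm{Ext}^1_{\cat{A}}(AeP_i,S_j)\cong\mathrm{Ext}^2_{\cat{A}}(P_i/AeP_i,S_j)=0$ only for $j<n$. So it shows only that the kernel $K$ of a projective cover $P_n^{m}\to AeP_i$ has top a sum of copies of $S_n$. To conclude $K=0$ you need the count
$m\le[AeP_i:S_n]=m-[K:S_n]$, which uses $[P_n:S_n]=\dim eAe=1$. You already flag this as the place where $eAe\cong\kk$ enters, and with it spelled out the argument goes through.
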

The {\em global dimension}  of an abelian category $\cat{A}$ is 
\begin{equation*}
\gdim{\cat{A}}=\displaystyle\sup\{ d\in \N \mid  \Ext{\cat{A}}{d}{\sh{E}}{\sh{F}}\neq0,\ \mbox{for}\ \sh{E},\sh{F}\in\cat{A}\}
\end{equation*}
with $\gdim{\cat{A}}=\infty$ when the set above is unbounded. The definition does not require that $\cat{A}$ has enough projective objects, although when it does the global dimension is the maximal length of any minimal projective resolution. When $\cat{A}$ is a length category it is enough to check the vanishing of Ext-groups on pairs of simple objects. 

\begin{theorem}[{\cite{Cline1988} and  \cite[Statement 9]{MR987824}}]
The global dimension of a highest weight category $\cat{A}$ is finite, in fact $\gdim{A})\leq 2\rk{K(\cat{A})}-2$.
\end{theorem}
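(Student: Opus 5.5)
Let $n+1=\rk{K(\cat{A})}$ be the number of simple objects, ordered $\ob{S}_0,\ldots,\ob{S}_n$ so that $\cat{A}$ is highest weight. I will prove by induction on $n$ the sharper statement that the projective dimensions satisfy $\mathrm{pd}\,\stan{}{i}\le n-i$ and $\mathrm{pd}\,\ob{S}_i\le 2n-i$. The global dimension bound then follows: in a length category it suffices to bound $\mathrm{pd}\,\ob{S}_i$ for every simple object, and the maximum of $2n-i$ over $i$ is $2n=2\rk{K(\cat{A})}-2$.

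\emph{Standard objects.} Since $\ob{P}_i\in\Filt{\stan{}{i},\ldots,\stan{}{n}}$ and $\stan{}{i}$ is the top quotient of $\ob{P}_i$, there is a short exact sequence $0\to K_i\to \ob{P}_i\to\stan{}{i}\to 0$. I will argue that $K_i\in\Filt{\stan{}{i+1},\ldots,\stan{}{n}}$. The standard argument is that in a $\Delta$-filtration of $\ob{P}_i$ the multiplicity of $\stan{}{i}$ equals $\dim\mathrm{Hom}(\ob{P}_i,\costan{}{i})=1$, which uses $\EndA{\cat{A}}{\stan{}{i}}\cong\kk$, and it occurs as the top factor. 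Equivalently, one can invoke \cref{thm : criterion_hw} to pass to the Serre quotient.

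Now argue by descending induction on $i$. We have $\stan{}{n}=\ob{P}_n$, which is projective. For $i<n$, every module filtered by $\stan{}{j}$ with $j>i$ has projective dimension at most $n-i-1$, by the horseshoe lemma and induction. The sequence above then gives $\mathrm{pd}\,\stan{}{i}\le n-i$.

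\emph{Simple objects.} Write $0\to R_i\to\stan{}{i}\to\ob{S}_i\to0$. All composition factors of $R_i$ lie in $\cat{A}_{i-1}$, that is, they are $\ob{S}_j$ with $j<i$. I will run an ascending induction on $i$. The base case is $\ob{S}_0=\stan{}{0}$, with $\mathrm{pd}\le n$. For the inductive step, each $\ob{S}_j$ with $j<i$ has $\mathrm{pd}\le 2n-j$. This naive bound only gives $\mathrm{pd}\,R_i\le 2n$ and hence $\mathrm{pd}\,\ob{S}_i\le 2n+1$, which is too weak.

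So the real content, and the main obstacle, is controlling the dimension shift correctly. To get the right bound I would instead use the homological inclusions $\cat{A}_{r-1}\hookrightarrow\cat{A}_r$ from \cref{thm : criterion_hw} together with induction on the number of simples. The category $\cat{A}_{n-1}$ is highest weight with $n$ simples, so by induction $\gdim{\cat{A}_{n-1}}\le 2n-2$, and its Ext groups agree with those computed in $\cat{A}$.

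It then remains to handle the new simple $\ob{S}_n$ and the Ext groups involving it. For $j<n$ and $\ob{E}\in\cat{A}_{n-1}$:
\begin{itemize}
\item $\Ext{\cat{A}}{k}{\ob{E}}{\ob{S}_n}$ can be computed via $\costan{}{n}$. Since $\costan{}{n}=\ob{I}_n$ is injective, the sequence $0\to\ob{S}_n\to\ob{I}_n\to Q\to0$ with $Q\in\cat{A}_{n-1}$ gives $\Ext{}{k}{\ob{E}}{\ob{S}_n}\cong\Ext{}{k-1}{\ob{E}}{Q}$. This vanishes for $k-1>2n-2$.
\item Dually, since $\stan{}{n}=\ob{P}_n$, we get $\Ext{}{k}{\ob{S}_n}{\ob{F}}\cong\Ext{}{k-1}{R_n}{\ob{F}}$ for $\ob{F}\in\cat{A}_{n-1}$, which vanishes for $k>2n-1$.
\item Finally, $\Ext{}{k}{\ob{S}_n}{\ob{S}_n}\cong\Ext{}{k-1}{R_n}{\ob{S}_n}\cong\Ext{}{k-2}{R_n}{Q}$ for $k\ge2$, which vanishes for $k-2>2n-2$.
\end{itemize}
This last case is exactly where the increment of $2$ per simple object appears. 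Since every object of $\cat{A}$ is an iterated extension of simple objects, these cases together give $\gdim{\cat{A}}\le 2n$, completing the induction.
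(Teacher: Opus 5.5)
Your final argument is correct. By induction, $\cat{A}_{n-1}$ is highest weight with $\gdim{\cat{A}_{n-1}}\le 2n-2$, and the inclusion $\cat{A}_{n-1}\hookrightarrow\cat{A}$ is homological. The sequences $0\to R_n\to\ob{P}_n\to\ob{S}_n\to 0$ and $0\to\ob{S}_n\to\ob{I}_n\to Q\to 0$ then shift degrees by at most one in each variable, so all Ext groups between simples vanish above $2n$. You should justify $Q\in\cat{A}_{n-1}$, which you state without proof: $[\ob{I}_n:\ob{S}_n]=\dim\mathrm{Hom}(\ob{P}_n,\ob{I}_n)=[\ob{P}_n:\ob{S}_n]=\dim\EndA{\cat{A}}{\stan{}{n}}=1$. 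The paper gives no proof of its own and only cites Cline--Parshall--Scott and Dlab--Ringel. Your argument is essentially the classical heredity-ideal argument behind that result. Writing $\cat{A}\simeq\mod{A}$ and $J=Ae_nA$, one has $\cat{A}_{n-1}\simeq\mod{A/J}$, and the estimate is $\gdim{A}\le\gdim{A/J}+2$.

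However, you should delete the ``sharper statement'' announced at the outset: $\mathrm{pd}\,\ob{S}_i\le 2n-i$ is false, because the indices run the wrong way. In \cref{middle_perv_CP2}, with $n=2$ and the order $0<1<2$, the paper later gives the minimal projective resolution of $\ob{S}_2$. It has length $4$, so $\mathrm{pd}\,\ob{S}_2=4>2$. The correct bound is $\mathrm{pd}\,\ob{S}_i\le n+i$, and with it your first route closes at once with no dimension-shift obstacle. The base case is that $\ob{S}_0=\stan{}{0}$ has $\mathrm{pd}\le n$. For the inductive step, $0\to R_i\to\stan{}{i}\to\ob{S}_i\to 0$ with $R_i\in\cat{A}_{i-1}$ gives $\mathrm{pd}\,\ob{S}_i\le\max(n-i,\,1+(n+i-1))=n+i$, whose maximum over $i$ is $2n$. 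This route is the more elementary one. It needs only $\mathrm{pd}\,\stan{}{i}\le n-i$ (your $\Delta$-filtration argument for $K_i$). It does not need the homological inclusion of the Serre subcategories, nor the fact that $\cat{A}_{n-1}$ is again highest weight. Your second route, in exchange, isolates the clean recursive estimate $\gdim{\cat{A}}\le\gdim{\cat{A}_{n-1}}+2$.
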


\subsection{Exceptional collections}
\label{subsec:exceptional}

Exceptional collections give rise to admissible thick subcategories and glued hearts, and are also closely related to highest weight categories. An object $\ob{E}\in \cat{D}$ is \defn{exceptional} if $\Ext{\cat{D}}{r}{\ob{E}}{\ob{E}} \cong \kk$ when $r=0$ and vanishes otherwise. A sequence $(\ob{E}_0,\ldots,\ob{E}_k)$ of exceptional objects is an \defn{exceptional collection} if $\ob{E}_j \in \orth \ob{E}_i$ whenever $i<j$.  An exceptional sequence is \defn{full} if it generates the triangulated category $\cat{D}$.

\begin{proposition}
\label{prop:except coll}
Suppose $(\ob{E}_0,\ldots,\ob{E}_k)$ is an exceptional collection in $\cat{D}$. Then there is a filtration by admissible thick subcategories 
\[
0 \subset \thick{}{\ob{E}_0} \subset \thick{}{\ob{E}_0,\ob{E}_1} \subset \cdots \subset \thick{}{\ob{E}_0,\ldots,\ob{E}_k} =:\cat{C} \subset \cat{D}
\]
with factors $\thick{}{\ob{E}_i} \simeq \cat{D}^b(\kk)$ for $i=0,\ldots,k$ and $\orth \cat{C}  \simeq  \cat{D} / \cat{C}\simeq \cat{C}\orth$.

Conversely, suppose $0 = \cat{D}_{-1} \subset \cat{D}_0 \subset \cdots \subset \cat{D}_k \subset \cat{D}$ is a filtration by admissible thick subcategories with quotients $\cat{D}_{i} / \cat{D}_{i-1} \simeq \der{b}{\kk}$ for $i=0,\ldots,k$. Then $({\jmath_0}_!\kk, \ldots,{\jmath_k}_!\kk)$ is an exceptional sequence in $\cat{D}$, where ${\jmath_i}_!$ is left adjoint to the quotient $\jmath_i^* \colon \cat{D}_{i} \to  \cat{D}_{i}/\cat{D}_{i-1}$.
\end{proposition}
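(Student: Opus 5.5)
Both directions go by induction on the length of the collection, using the recollement formalism of \cref{sec:thick subcats}. Throughout we need that $\cat{D}$ is Hom-finite and $\kk$-linear, so that for an exceptional object $\ob{E}$ the functor $\Mor{\cat{D}}{\ob{E}}{-}$ takes values in finite-dimensional graded vector spaces. The first claim to verify is that a single exceptional object $\ob{E}$ generates an admissible subcategory $\thick{}{\ob{E}}\simeq\der{b}{\kk}$.

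The equivalence is $V\mapsto V\otimes\ob{E}$ for $V$ a finite-dimensional graded vector space, using $\End{\ob{E}}^\bullet=\kk$. The right adjoint to the inclusion is
\[
\ob{D}\mapsto \bigoplus_r \Mor{\cat{D}}{\ob{E}}{\ob{D}[r]}\otimes\ob{E}[-r],
\]
with counit the evaluation map. The left adjoint is defined dually using $\Mor{\cat{D}}{\ob{D}}{\ob{E}[r]}^\vee$. Here the algebraic hypothesis, or at least the existence of the evaluation map as a morphism in $\cat{D}$, is used to make sense of the tensor product. Completing the evaluation map to a triangle gives the decomposition $\cat{D}=\thick{}{\ob{E}}*\ob{E}^{\perp}$, and similarly on the other side.

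For the forward direction:
\begin{enumerate}
\item Show that an admissible subcategory $\cat{A}$ and an admissible subcategory $\cat{B}$ of $\cat{A}^\perp$ combine to make $\cat{A}*\cat{B}=\thick{}{\cat{A},\cat{B}}$ admissible. The projection functors are obtained by composing: first apply the adjoint for $\cat{A}$, then the one for $\cat{B}$, noting that $\cat{B}$ is admissible in $\cat{D}$ once it is admissible in $\cat{A}^\perp\simeq\cat{D}/\cat{A}$.
\item Induct: the condition $\ob{E}_j\in\orth\ob{E}_i$ for $i<j$ gives the semi-orthogonality needed to apply step (1) at each stage, so every term of the filtration is admissible.
\item The quotients $\thick{}{\ob{E}_0,\ldots,\ob{E}_i}/\thick{}{\ob{E}_0,\ldots,\ob{E}_{i-1}}$ are identified with $\thick{}{\ob{E}_i}\simeq\der{b}{\kk}$ through the restricted equivalence $\orth\cat{C}\to\cat{D}/\cat{C}$ recalled from \cite{MR1039961}. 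The last assertion, $\orth\cat{C}\simeq\cat{D}/\cat{C}\simeq\cat{C}\orth$, is exactly that recalled fact.
\end{enumerate}
Some care is needed with which side the semi-orthogonality lives on; I would fix conventions so that $\langle\ob{E}_0,\dots\rangle$ matches $\cat{D}=\langle\orth\cat{C},\cat{C}\rangle$.

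For the converse, set $\ob{E}_i={\jmath_i}_!\kk\in\cat{D}_i$. Since ${\jmath_i}_!$ is fully faithful, $\Mor{}{\ob{E}_i}{\ob{E}_i[r]}=\Mor{\der{b}{\kk}}{\kk}{\kk[r]}$, so each $\ob{E}_i$ is exceptional. For $i<j$, the object $\ob{E}_j$ lies in the image of ${\jmath_j}_!$, which is $\orth\cat{D}_{j-1}$ inside $\cat{D}_j$. Since $\ob{E}_i\in\cat{D}_i\subset\cat{D}_{j-1}$, this gives $\Mor{}{\ob{E}_j}{\ob{E}_i[r]}=0$ for all $r$, and all these Homs may be computed in $\cat{D}$ because the subcategories are full.

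The main obstacle is the first step: making the adjoints for $\thick{}{\ob{E}}$ rigorous. This needs Hom-finiteness, and in particular $\bigoplus_r\Mor{\cat{D}}{\ob{E}}{\ob{D}[r]}$ must be finite-dimensional in total. Hom-finiteness only bounds each degree separately, so this is exactly where boundedness enters. I would first try to deduce it from the bounded t-structure and the fact that the collection is full, and failing that add it as a standing assumption, as is standard (``proper'' categories).
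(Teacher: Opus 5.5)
This is essentially the paper's proof. The paper proves the forward direction by citing Bondal's Theorem~3.2, whose proof is your induction: explicit adjoints to $\thick{}{\ob{E}}$, then composing projections. It proves the converse by the same adjunction argument you give (full faithfulness of ${\jmath_i}_!$, with image in $\orth\cat{D}_{i-1}$).

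Two details need correcting.

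First, impose total finiteness of $\bigoplus_r\Mor{\cat{D}}{\ob{E}}{\ob{D}[r]}$ and $\bigoplus_r\Mor{\cat{D}}{\ob{D}}{\ob{E}[r]}$ as a hypothesis; do not try to derive it. The paper's appeal to Bondal tacitly requires this hypothesis too. The statement assumes neither fullness nor a t-structure, and Hom-finiteness together with a bounded t-structure is not enough. Take $A$ to be the algebra of the quiver with an arrow $a\colon 1\to 2$ and a loop $x$ at $2$, with relations $x^2=0=x\circ a$ on representations. The minimal projective resolution of the simple $\ob{S}_1$ is $\cdots\to\ob{P}_2\to\ob{P}_2\to\ob{P}_1\to\ob{S}_1$. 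So $\ob{S}_1$ is exceptional in $\der{b}{\mod{A}}$, yet $\Ext{}{r}{\ob{S}_1}{\ob{S}_2}\cong\kk$ for every $r\geq 1$. Hence $\thick{}{\ob{S}_1}$ is not right admissible, even though $\der{b}{\mod{A}}$ is Hom-finite with a bounded t-structure. Separately, no enhancement is needed for the evaluation map, since $V\otimes\ob{E}$ is just $\ob{E}^{\oplus\dim V}$.

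Second, in step~(1) drop the claim that admissibility in $\cat{A}^\perp$ passes to $\cat{D}$; in general only the left adjoint composes for free. Instead take $\cat{A}=\thick{}{\ob{E}_i}$ and $\cat{B}=\thick{}{\ob{E}_0,\ldots,\ob{E}_{i-1}}$. Then $\cat{B}\subset\cat{A}^\perp$ by the exceptionality condition, and both are admissible in $\cat{D}$ by your one-object lemma and the inductive hypothesis. This also settles the side convention: $\cat{A}*\cat{B}$ is the decomposition $\langle\thick{}{\ob{E}_i},\thick{}{\ob{E}_0,\ldots,\ob{E}_{i-1}}\rangle$ in the paper's notation.
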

\begin{proof}
The first statement is {\cite[Theorem 3.2]{Bondal_1990}} and the second follows from the adjunction and the fact that $\kk$ is exceptional in $\der{b}{\kk}$.
\end{proof}

When $(\ob{E}_0,\ldots,\ob{E}_n)$ is a full exceptional collection in $\cat{D}$ there is a unique heart glued from the hearts $\Filt{\ob{E}_i} \subset \thick{}{\ob{E}_i}$ with respect to the filtration 
\[
0 \subset \thick{}{\ob{E}_0} \subset \thick{}{\ob{E}_0,\ob{E}_1} \subset \cdots \subset \thick{}{\ob{E}_0,\ldots,\ob{E}_n} = \cat{D}.
\]
We say this heart is \defn{glued along} $(\ob{E}_0,\ldots,\ob{E}_n)$, see \cite[\S2.4]{Bodzenta:2026aa}. When $\Ext{\cat{D}}{<0}{\ob{E}_i}{\ob{E}_j}=0$ for $i< j$ the heart contains the exceptional collection. The next results describe particular cases. 

\begin{proposition}[{\cite[Lemma~3.14]{MR2335991}}]
\label{prop:macri}
 Suppose $(\ob{E}_0,\ldots,\ob{E}_k)$ is an exceptional collection with $\Ext{\cat{D}}{\leq 0}{\ob{E}_i}{\ob{E}_j}=0$ for all $i<j$. Then the heart in $\thick{}{\ob{E}_0,\ldots,\ob{E}_k}$ glued along $(\ob{E}_0,\ldots,\ob{E}_k)$ is 
 the extension closure $\Filt{\ob{E}_0,\ldots,\ob{E}_k}$ and is a length  heart  with simple objects $\ob{E}_0,\ldots,\ob{E}_k$.
\end{proposition}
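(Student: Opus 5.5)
I would argue by induction on $k$, proving both claims simultaneously: the glued heart $\cat{H}_k$ in $\cat{D}_k:=\thick{}{\ob{E}_0,\ldots,\ob{E}_k}$ equals $\Filt{\ob{E}_0,\ldots,\ob{E}_k}$, and it is a length category whose simple objects are exactly the $\ob{E}_i$. For $k=0$ the statement is immediate. The subcategory $\thick{}{\ob{E}_0}\simeq\der{b}{\kk}$ is generated by $\ob{E}_0$, and the glued heart is by definition $\Filt{\ob{E}_0}$. This is equivalent to $\vect{\kk}$, so it is length with the single simple object $\ob{E}_0$.

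For the inductive step I use the recollement from \cref{prop:except coll}, with $\cat{C}=\cat{D}_{k-1}$ admissible in $\cat{D}_k$ and quotient $\cat{D}_k/\cat{D}_{k-1}\simeq\thick{}{\ob{E}_k}\simeq\der{b}{\kk}$. The exceptional condition gives $\ob{E}_k\in\orth\cat{D}_{k-1}$. Hence $\ob{E}_k\cong\jmath_!\jmath^*\ob{E}_k$, and $\imath^*\ob{E}_k=0$. The glued aisle is described by $\tau^{>0}\jmath^*=0=\tau^{>0}\imath^*$, so $\ob{E}_k$ lies in the glued aisle. For the co-aisle I must check $\tau^{<0}\imath^!\ob{E}_k=0$. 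The triangle $\imath_*\imath^!\ob{E}_k\to\ob{E}_k\to\jmath_*\jmath^*\ob{E}_k$ shows $\Mor{}{\ob{E}_i[m]}{\imath^!\ob{E}_k}\cong\Mor{}{\ob{E}_i[m]}{\ob{E}_k}$ for $i<k$. This vanishes for $m\geq 0$ by the hypothesis $\Ext{\cat{D}}{\leq 0}{\ob{E}_i}{\ob{E}_k}=0$.

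\textbf{Main step.} I need to convert this Hom-vanishing into the statement $\imath^!\ob{E}_k\in\cat{D}_{k-1}^{\geq 0}$. By induction, $\cat{H}_{k-1}$ is length with simples $\ob{E}_0,\ldots,\ob{E}_{k-1}$, so its aisle $\cat{D}_{k-1}^{<0}$ is the extension closure of the shifts $\ob{E}_i[m]$ with $m>0$. The co-aisle $\cat{D}_{k-1}^{\geq 0}$ is the right orthogonal of this aisle, so it suffices to test against generators. Since the vanishing above holds for all $m\geq 0$, in particular for $m>0$, we get $\imath^!\ob{E}_k\in\cat{D}_{k-1}^{\geq0}$, hence $\ob{E}_k\in\cat{H}_k$. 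The earlier objects satisfy $\imath_*\cat{H}_{k-1}\subset\cat{H}_k$ by the standard property of gluing, so every $\ob{E}_i$ lies in $\cat{H}_k$, and therefore $\Filt{\ob{E}_0,\ldots,\ob{E}_k}\subset\cat{H}_k$.

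For the reverse inclusion, take $\ob{D}\in\cat{H}_k$. The gluing yields a short exact sequence in $\cat{H}_k$,
\[
0\to\imath_*\,{}^{p}\!H^0\imath^*\ob{D}[-1]\text{-type kernel}\to\jmath_{!*}\text{-part}\to\cdots,
\]
but it is cleaner to proceed as follows. The object $\jmath^*\ob{D}$ lies in the standard heart of $\der{b}{\kk}$, i.e.\ it is $\kk^r$. The adjunction triangle $\jmath_!\jmath^*\ob{D}\to\ob{D}\to\imath_*\imath^*\ob{D}$ has first term $\ob{E}_k^{r}\in\cat{H}_k$. Taking the long exact cohomology sequence in $\cat{H}_k$ gives
\[
0\to H^{-1}(\imath_*\imath^*\ob{D})\to\ob{E}_k^r\to\ob{D}\to H^0(\imath_*\imath^*\ob{D})\to 0.
\]
Here $H^{-1}$ and $H^0$ are computed in $\cat{H}_k$, and they are $\imath_*$ of objects of $\cat{H}_{k-1}$ because $\imath_*$ is t-exact for the glued structure. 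Next, the hypothesis $\Mor{}{\ob{E}_i}{\ob{E}_k}=0$ for $i<k$ gives $\Mor{}{\cat{H}_{k-1}}{\ob{E}_k}=0$, so the kernel term vanishes. Thus $\ob{D}$ is an extension of an object of $\Filt{\ob{E}_0,\ldots,\ob{E}_{k-1}}$ by a quotient of $\ob{E}_k^r$. The object $\ob{E}_k$ has no proper nonzero subobjects in $\cat{H}_k$: any subobject $\ob{A}$ either has $\jmath^*\ob{A}=0$, in which case $\ob{A}\in\imath_*\cat{H}_{k-1}$ and so $\ob{A}=0$ by the Hom-vanishing, or else $\jmath^*\ob{A}\cong\kk$. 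It follows that the image of $\ob{E}_k^r\to\ob{D}$ is a direct sum of copies of $\ob{E}_k$, and hence $\ob{D}\in\Filt{\ob{E}_0,\ldots,\ob{E}_k}$.

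Simplicity of $\ob{E}_k$ follows from the same subobject analysis applied to quotients, together with $\End{\ob{E}_k}=\kk$. Every object is then a finite iterated extension of the $\ob{E}_i$, which gives the length property, and the $\ob{E}_i$ are pairwise non-isomorphic simples, so they are exactly the simple objects.

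I expect the main obstacle to be the subobject argument, namely handling the case $\jmath^*\ob{A}\cong\kk$ with $\ob{A}\neq\ob{E}_k$. I would treat it by showing that the cokernel $\ob{E}_k/\ob{A}$ lies in $\imath_*\cat{H}_{k-1}$. Then I would use $\Ext{}{1}{\cat{H}_{k-1}}{\cat{H}_{k-1}}$-type vanishing combined with $\Mor{}{\ob{E}_k}{\imath_*\cat{H}_{k-1}}=\Mor{}{\imath^*\ob{E}_k}{-}=0$. This shows the quotient map $\ob{E}_k\to\ob{E}_k/\ob{A}$ is zero, which forces $\ob{A}=\ob{E}_k$.
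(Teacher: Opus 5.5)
Your argument is correct. The paper gives no proof of its own here; it simply cites Macrì's Lemma~3.14. Your induction along $\cat{D}_{k-1}\subset\cat{D}_k$, using the BBD description of the glued aisle and co-aisle, is the natural way to prove it. The two hypotheses enter exactly where they should. Exceptionality gives $\imath^*\ob{E}_k=0$ and $\ob{E}_k\cong\jmath_!\jmath^*\ob{E}_k$. The vanishing of $\mathrm{Ext}^{\leq 0}(\ob{E}_i,\ob{E}_k)$ gives $\Mor{}{\ob{E}_i[m]}{\imath^!\ob{E}_k}\cong\Mor{}{\ob{E}_i[m]}{\ob{E}_k}=0$ for $m\geq 0$, and also $\Mor{}{\cat{H}_{k-1}}{\ob{E}_k}=0$.

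Some tidying is needed, though nothing here is a gap.

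\textbf{The reverse inclusion.} Once you have $H^{-1}(\imath_*\imath^*\ob{D})=0$, the map $\ob{E}_k^r\to\ob{D}$ is already a monomorphism. So you get $0\to\ob{E}_k^r\to\ob{D}\to\imath_*H^0(\imath^*\ob{D})\to 0$ directly, hence $\ob{D}\in\Filt{\ob{E}_0,\ldots,\ob{E}_k}$. The subsequent discussion of the image being a sum of copies of $\ob{E}_k$ is redundant. The abandoned display with the ``$\jmath_{!*}$-part'' should be deleted.

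\textbf{Simplicity of $\ob{E}_k$.} The step you flag as the main obstacle is immediate. If $\jmath^*\ob{A}\cong\jmath^*\ob{E}_k$, then $\ob{E}_k/\ob{A}\in\imath_*\cat{H}_{k-1}$ by exactness of $\jmath^*$ on hearts. Since $\Mor{}{\ob{E}_k}{\imath_*\ob{C}}\cong\Mor{}{\imath^*\ob{E}_k}{\ob{C}}=0$, the epimorphism $\ob{E}_k\to\ob{E}_k/\ob{A}$ is zero. No $\mathrm{Ext}^1$-vanishing is involved. This subobject analysis alone proves $\ob{E}_k$ is simple, so the appeal to quotients and to $\End{\ob{E}_k}=\kk$ is unnecessary.

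\textbf{Simplicity of the earlier objects.} You should say why $\ob{E}_0,\ldots,\ob{E}_{k-1}$ stay simple in $\cat{H}_k$, not just in $\cat{H}_{k-1}$. The reason is that $\imath_*\cat{H}_{k-1}$ is the kernel of the exact functor $\jmath^*|_{\cat{H}_k}$, hence closed under subobjects. So a subobject of $\imath_*\ob{E}_i$ in $\cat{H}_k$ is already a subobject in $\cat{H}_{k-1}$.

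With these in place, the length property and the identification of the simples follow exactly as you say.
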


The next result will be used in \cref{middle_perv_CP2} to give an example of a faithful glued heart which is not serially faithful.

\begin{proposition}
\label{prop:exceptional standards}
Suppose $\cat{H}$ is a Deligne finite heart in $\cat{D}$ with an ordering of its simple objects for which the standard objects form a full  exceptional collection $(\stan{}{0},\ldots,\stan{}{n})$. Then $\cat{H}$ is glued along $(\stan{}{0},\ldots,\stan{}{n})$.
\end{proposition}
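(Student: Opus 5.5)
Write $\cat{D}_i=\thick{}{\stan{}{0},\ldots,\stan{}{i}}$. By \cref{prop:except coll} these form a filtration of $\cat{D}$ by admissible thick subcategories with $\cat{D}_n=\cat{D}$ (fullness) and $\cat{D}_i/\cat{D}_{i-1}\simeq\der{b}{\kk}$. The glued heart is built inductively: at step $i$ we glue the heart already constructed on $\cat{D}_{i-1}$ with the heart $\vect{\kk}$ on the quotient. So I would prove by induction on $i$ the following claim: the t-structure on $\cat{D}_i$ glued along $(\stan{}{0},\ldots,\stan{}{i})$ has heart $\cat{H}_i:=\cat{H}\cap\cat{D}_i$, and $\cat{H}_i=\Filt{\ob{S}_0,\ldots,\ob{S}_i}$ is the Serre subcategory. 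The case $i=n$ is the statement. It is cleaner to prove the stronger claim that the t-structure on $\cat{D}$ induced by $\cat{H}$ restricts to each $\cat{D}_i$, and that for each $i$ the restricted t-structure on $\cat{D}_i$ is glued with respect to $\cat{D}_{i-1}\subset\cat{D}_i$.

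For restriction I would first show that every object of $\cat{D}_i$ has all its $\cat{H}$-cohomology objects in $\cat{H}_i$. Since the standards $\stan{}{j}$ with $j\le i$ lie in the Serre subcategory $\cat{H}_i$, it suffices to show that the full subcategory of objects with cohomology in $\cat{H}_i$ is thick. Closure under shifts and summands is clear. Closure under extensions follows from the long exact cohomology sequence, because $\cat{H}_i$ is Serre. Hence the truncations of objects of $\cat{D}_i$ stay in $\cat{D}_i$, and $\cat{H}\cap\cat{D}_i$ is a bounded heart of $\cat{D}_i$. The reverse containment $\cat{H}_i\subseteq\cat{D}_i$ holds because each simple $\ob{S}_j$ with $j\le i$ lies in $\cat{D}_i$. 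To see this, induct on $j$: $\stan{}{j}$ has top $\ob{S}_j$, and its radical lies in $\cat{H}_{j-1}\subseteq\cat{D}_{j-1}$.

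For the gluing condition I would use \cref{prop: glued heart conditions}(4) on $\cat{D}_i$ with the subcategory $\cat{C}=\cat{D}_{i-1}$. Here $\orth\cat{C}\cap\cat{D}_i=\thick{}{\stan{}{i}}$, since $\Mor{}{\stan{}{i}}{\cat{D}_{i-1}}=0$ by exceptionality and $\stan{}{i}$ generates the quotient. It therefore suffices to check condition (4) on $\cat{H}_i$: every $\ob{A}\in\cat{H}_i$ lies in $(\cat{D}^{\le0}\cap\orth\cat{C})*(\cat{D}^{\le0}\cap\cat{C})$, because the aisle is the extension closure of nonnegative shifts of the heart and the decomposition is compatible with extensions via the octahedral axiom. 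Given $\ob{A}\in\cat{H}_i$, take $V=\Mor{}{\stan{}{i}}{\ob{A}}$ and the evaluation map $V\otimes\stan{}{i}\to\ob{A}$. I expect its cone to lie in $\cat{D}_{i-1}$ and in $\cat{D}^{\le0}$. Lying in $\cat{D}^{\le 0}$ is automatic, since the cone of a map between objects of the heart lies in $\cat{H}*\cat{H}[1]$. The hard part is membership in $\cat{D}_{i-1}$, which amounts to $\jmath^*$ of the evaluation map being an isomorphism in $\der{b}{\kk}$.

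This is where the main obstacle lies. In $\der{b}{\kk}$ the object $\jmath^*\ob{A}$ must be concentrated in degree $0$, and its degree-$0$ part should be $\Mor{}{\stan{}{i}}{\ob{A}}$. Note that $\jmath^*\ob{A}$ is computed via the right adjoint as $\Mor{}{\jmath_!\kk}{\ob{A}[\ast]}=\Ext{}{\ast}{\stan{}{i}}{\ob{A}}$, since $\jmath_!\kk=\stan{}{i}$. So I need $\Ext{\cat{D}}{k}{\stan{}{i}}{\ob{A}}=0$ for $k\ne0$ and all $\ob{A}\in\cat{H}_i$. Negative degrees vanish because both objects lie in the heart. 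For $k>0$, I would use that $\stan{}{i}$ is the projective cover of $\ob{S}_i$ in $\cat{H}_i$ and argue by dévissage on the length of $\ob{A}$, so it suffices to take $\ob{A}=\ob{S}_j$ with $j\le i$. For $j<i$, $\ob{S}_j\in\cat{D}_{i-1}$ and exceptionality gives vanishing. For $j=i$, use the sequence $0\to R\to\stan{}{i}\to\ob{S}_i\to0$ with $R\in\cat{D}_{i-1}$: then $\Ext{}{k}{\stan{}{i}}{\ob{S}_i}\cong\Ext{}{k}{\stan{}{i}}{\stan{}{i}}$, which is $0$ for $k>0$ and $\kk$ for $k=0$. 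This gives the isomorphism and completes the induction.
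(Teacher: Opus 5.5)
Your proposal is correct and is essentially the paper's argument. Both identify $\cat{H}\cap\cat{D}_i$ with $\Filt{\ob{S}_0,\ldots,\ob{S}_i}$ as a bounded heart in $\cat{D}_i$ and then verify condition (4) of \cref{prop: glued heart conditions}. The paper checks (4) only on simples: for $j<i$ one has $\ob{S}_j\in\cat{D}_{i-1}$, and for $\ob{S}_i$ your evaluation triangle is exactly $\stan{}{i}\to\ob{S}_i\to\ob{K}_i[1]$. So your $\mathrm{Ext}$-vanishing argument for arbitrary $\ob{A}\in\cat{H}_i$ is valid but unnecessary.

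The one step you leave unjustified is that the radical of $\stan{}{j}$ lies in $\cat{H}_{j-1}$. This is where exceptionality enters: since $\stan{}{j}$ is the projective cover of $\ob{S}_j$ in $\cat{H}_j$, one has $[\stan{}{j}:\ob{S}_j]\leq\dim\End{\stan{}{j}}=1$.
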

\begin{proof}
Let $\ob{S}_0,\ldots,\ob{S}_n$ be (representatives of the iso-classes of) the simple objects of $\cat{H}$. Since $\stan{}{i}$ is exceptional there is a (unique) triangle
\[
\ob{K}_i \to \stan{}{i}\to \ob{S}_i \to \ob{K}_i[1]
\]
with $\ob{K}_i\in \Filt{\ob{S}_0,\ldots,\ob{S}_{i-1}}$. It follows by induction that $\thick{}{\stan{}{0},\ldots,\stan{}{i}} = \thick{}{\ob{S}_0,\ldots,\ob{S}_i}$  for each $i\in 0,\ldots,n$. Moreover,  $\cat{H} \cap \thick{}{\stan{}{0},\ldots,\stan{}{i}} = \Filt{\ob{S}_0,\ldots,\ob{S}_i}$ is a heart in $\thick{}{\stan{}{0},\ldots,\stan{}{i}}$ and we claim the semi-orthogonal decomposition $\langle \thick{}{\stan{}{i}}, \thick{}{\stan{}{0},\ldots,\stan{}{i-1}}\rangle$ restricts to its aisle. It suffices to check the claim for each simple object in the heart. For $\ob{S}_0,\ldots,\ob{S}_{i-1}$ it follows because they are in $\thick{}{\stan{}{0},\ldots,\stan{}{i-1}}$ and for $\ob{S}_i$ it follows from the above triangle. Hence $\cat{H}$ is glued with respect to the admissible thick filtration by \cref{prop: glued heart conditions}.
\end{proof}

\subsection{Faithful highest weight hearts}
\label{subsec:FH} 
A triangulated category is \defn{algebraic} if it admits a dg-enhancement in the sense of \cite{MR1055981}, see also \cite{Porta_2010} and \cite[Proposition 3.1]{Canonaco_2017}. A heart $\cat{H} \subset \cat{D}$ is \defn{faithful} if there is a realisation functor $\cat{D}^b(\cat{H}) \to \cat{D}$ from its bounded derived category which is an equivalence. Such a realisation functor always exists when $\cat{D}$ is an algebraic triangulated category, see \cite{keller-Vossieck, Letz-Sauter}. It is an equivalence if and only if $\Ext{\cat{H}}{m}{\ob{H}}{\ob{H'}} \cong \Ext{\cat{D}}{m}{\ob{H}}{\ob{H'}}$ for all $\ob{H},\ob{H'}\in \cat{H}$ and $m\in \Z$, see \cite[R\'emarque~3.1.17]{BBD} or \cite[Chapter 5, Theorem 3.7.3]{gelfandmanin}. 

We say a heart $\cat{H} \subset \cat{D}$ is \defn{serially faithful with respect to an admissible thick filtration} $0=\cat{D}_{-1} \subset \cat{D}_0 \subset \cdots \subset \cat{D}_n=\cat{D}$ if each $\cat{H} \cap \cat{D}_r$ is a faithful heart in $\cat{D}_r$. We say $\cat{H}$ is \defn{glued} if it is constructed inductively by glueing from hearts in $\cat{D}_{r+1}/\cat{D}_r$. When $n=0$ serially faithful reduces to faithful. We will be particularly interested in serially faithful hearts which are glued along full exceptional collections, \ie where each quotient of the thick admissible filtration is generated by a single exceptional object.  \cref{middle_perv_CP2} provides an example of a faithful heart which is glued along an exceptional collection but which is not serially faithful.

Let $(\ob{E}_0,\ldots,\ob{E}_n)$ be an exceptional collection in $\cat{D}$. Its \defn{left dual exceptional collection} is the exceptional sequence $(\ob{F}_n,\ldots,\ob{F}_0)$, with $\ob{F}_0=\ob{E}_0$ and 
\[
\ob{F}_i=L_{\ob{E}_0}\ldots L_{\ob{E}_{i-2}}L_{\ob{E}_{i-1}}\ob{E}_i
\]
 if $i>0$, where $L_{\ob{E}_j}\ob{E}_k$ denotes the \defn{left mutation of $\ob{E}_j$ at $\ob{E}_k$} defined by the canonical triangle
\[
\mor{*}{\ob{E}_j}{\ob{E}_k}\otimes \ob{E}_j \to \ob{E}_k \to L_{\ob{E}_j}\ob{E}_k\to \mor{*}{\ob{E}_j}{\ob{E}_k}\otimes \ob{E}_j[1].
\]
The left dual of a {\em full} exceptional collection is uniquely characterised by the Hom-duality property
\[
\Ext{\cat{D}}{d}{\ob{E}_i}{\ob{F}_j} \cong
\begin{cases}
\kk & i=j, d=0\\
0 & \text{otherwise}.
\end{cases}
\]
See \cite[\S2.3]{Bodzenta:2026aa} for further details.

Under the assumption that $\cat{D}$ is an algebraic triangulated category, faithful highest weight hearts can be characterised in terms of dual pairs of exceptional sequences as in \cite{Bodzenta:2026aa} or in terms of serial faithfulness. 
\begin{theorem}
\label{thm:fhw}
Let $\cat{D}$ be an algebraic triangulated category. The following are equivalent:
\begin{enumerate}
\item The heart $\cat{H}\subset \cat{D}$ is faithful and highest weight;
\item The sequence $(\stan{}{0},\ldots,\stan{}{n})$ of standard objects is a full exceptional collection with left dual pair the sequence $(\costan{}{n},\ldots,\costan{}{0})$ of  costandard objects;
\item The heart $\cat{H}$ contains a dual pair of full exceptional collections;
\item The heart $\cat{H}$ is  glued along a full exceptional collection and serially faithful for the associated admissible thick filtration.
\end{enumerate} 
\end{theorem}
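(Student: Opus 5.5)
I will prove the cycle $(1)\Rightarrow(2)\Rightarrow(3)\Rightarrow(4)\Rightarrow(1)$, importing the equivalence of $(1)$, $(2)$ and $(3)$ from \cite{Bodzenta:2026aa} where possible. The genuinely new content is the comparison with serial faithfulness.

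\emph{$(1)\Rightarrow(2)$.} Assume $\cat{H}$ is faithful and highest weight. In a highest weight category the standard and costandard objects satisfy $\Ext{\cat{H}}{d}{\stan{}{i}}{\costan{}{j}}=\kk$ when $i=j$, $d=0$, and vanish otherwise. We also have $\Ext{\cat{H}}{d}{\stan{}{j}}{\stan{}{i}}=0$ for $j>i$, since $\stan{}{i}\in\cat{A}_i$ and $\stan{}{j}$ is projective in $\cat{A}_j$, together with homologicity from \cref{thm : criterion_hw}. Finally $\End{\stan{}{i}}=\kk$ and the higher self-extensions vanish by the same homological argument. Faithfulness transports all these Ext computations to $\cat{D}$. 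Fullness holds because the $\ob{S}_i$ generate $\cat{D}$, $\cat{H}$ being a bounded heart, and by the triangles in \cref{prop:exceptional standards} they lie in $\thick{}{\stan{}{0},\ldots,\stan{}{n}}$. The Hom-duality characterisation of the left dual then identifies $(\costan{}{n},\ldots,\costan{}{0})$ as that dual. $(2)\Rightarrow(3)$ is trivial.

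\emph{$(3)\Rightarrow(4)$.} Let $(\ob{E}_i)$ and $(\ob{F}_i)$ be a dual pair of full exceptional collections in $\cat{H}$. Since both are in the heart, negative Exts between their members vanish. Following the proof of \cref{prop:exceptional standards}, I will show that $\cat{H}$ is glued along $(\ob{E}_0,\ldots,\ob{E}_n)$, and that $\cat{H}_r:=\cat{H}\cap\thick{}{\ob{E}_0,\ldots,\ob{E}_r}$ is a heart containing the dual pair $(\ob{E}_0,\ldots,\ob{E}_r)$, $(\ob{F}_r,\ldots,\ob{F}_0)$. The point is that the left dual of a truncation of a full collection is the corresponding truncation of the dual, by the inductive mutation formula. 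So it suffices to prove that a heart containing a dual pair of full exceptional collections is faithful, and apply this to each $\cat{D}_r$. For that step I would use the vanishing $\Ext{\cat{D}}{d}{\ob{E}_i}{\ob{F}_j}=0$ for $d>0$ and show that the $\ob{E}_i$ and $\ob{F}_j$ act like projectives and injectives relative to $\cat{D}$. Concretely, I would check that the realisation functor is fully faithful on $\ob{E}$'s against $\ob{F}$'s and that the two families generate, so by dévissage the realisation functor is an equivalence. This is essentially the content of \cite{Bodzenta:2026aa}, which I would cite.

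\emph{$(4)\Rightarrow(1)$.} Faithfulness is the case $r=n$. For highest weight I will apply \cref{thm : criterion_hw} with $\cat{A}_r=\cat{H}\cap\cat{D}_r$. Gluing gives that these are Serre subcategories with quotient $\cat{A}_r/\cat{A}_{r-1}\simeq\Filt{\ob{E}_r}$-type heart of $\der{b}{\kk}$, that is $\vect{\kk}$. Homologicality of $\cat{A}_{r-1}\hookrightarrow\cat{A}_r$ follows from serial faithfulness, because both Ext groups agree with $\Ext{\cat{D}}{k}{-}{-}$, using that $\cat{D}_{r-1}\subset\cat{D}_r$ is full. I expect the main obstacle to be the faithfulness step in $(3)\Rightarrow(4)$, that is, showing a dual pair forces faithfulness, and in particular keeping track of the fact that truncated dual pairs stay in the truncated hearts. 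There I would lean on \cite{Bodzenta:2026aa}.
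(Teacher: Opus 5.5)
Your cycle of implications and your $(1)\Rightarrow(2)$ match the paper. Deferring the faithfulness in $(3)\Rightarrow(4)$ to \cite{Bodzenta:2026aa} is also what the paper does. Two steps, however, do not go through as written.

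\textbf{The Deligne finiteness hypothesis in $(4)\Rightarrow(1)$.} You apply \cref{thm : criterion_hw} without checking that $\cat{H}$ is Deligne finite. That is a hypothesis of the criterion, and it is also needed for the definition of standard objects. Length with $n+1$ simples and Hom/$\mathrm{Ext}^1$-finiteness come from the gluing and from $\cat{D}$. Enough projectives, however, is not automatic. Faithfulness with finitely many simples does not give it: finite-length $\kk[[x]]$-modules form a faithful heart in their own derived category, with one simple and no nonzero projectives. The paper obtains this step from \cite[Proposition 4.8]{MR4349392}. You need that, or an equivalent argument, before invoking the criterion.

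\textbf{The gluing step in $(3)\Rightarrow(4)$.} You cannot follow the proof of \cref{prop:exceptional standards} here. It assumes $\cat{H}$ is Deligne finite and that the exceptional objects are its standard objects, and it uses the triangles $\ob{K}_i\to\stan{}{i}\to\ob{S}_i$. None of this is available under (3).

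The gluing is nonetheless easy if you use the dual collection. Put $\cat{C}=\thick{}{\ob{E}_0,\ldots,\ob{E}_{n-1}}$; Hom-duality gives $\ob{F}_n\in\cat{C}^\perp$. Hence the $\orth\cat{C}=\thick{}{\ob{E}_n}$-component of any $X$ is $\bigoplus_m \Mor{\cat{D}}{X}{\ob{F}_n[m]}^\vee\otimes\ob{E}_n[m]$. For $X$ in the aisle only $m\geq 0$ contributes, so this component lies in the aisle. The $\cat{C}$-component then lies in the aisle too, being an extension of $X$ and a shift of the first component. Now \cref{prop: glued heart conditions} gives the gluing. You can induct using your correct remark that $(\ob{F}_{n-1},\ldots,\ob{F}_0)$ is the left dual of the truncated collection. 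The paper simply cites \cite[Theorem 1.3(1)]{Bodzenta:2026aa} here.

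\textbf{The faithfulness sketch.} Your d\'evissage argument is not a proof on its own. It would need the two families to generate $\der{b}{\cat{H}}$, not just $\cat{D}$. It would also need the vanishing of the Yoneda groups $\Ext{\cat{H}}{d}{\ob{E}_i}{\ob{F}_j}$ for $d\geq 2$. That vanishing is exactly the content of \cite{Bodzenta:2026aa}, so the citation is doing the real work there.
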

\begin{proof} \leavevmode
$(1)\Rightarrow (2)$: Assume $\cat{H}\subset\cat{D}$ is faithful and highest weight. We prove that $(\stan{}{0},\ldots,\stan{}{n})$ is a full exceptional collection; the proof for $(\costan{}{n},\ldots,\costan{}{0})$ is dual.

The $i$-th standard object $\stan{}{i}$ is the projective cover of the simple $\ob{S}_i$ in $\Filt{\ob{S}_0,\ldots,\ob{S}_i}$. Since $\cat{H}$ is a faithful heart $\Ext{\cat{D}}{r}{\stan{}{i}}{\stan{}{j}}\cong \Ext{\cat{H}}{r}{\stan{}{i}}{\stan{}{j}} =0$ for $j\leq i$, unless $i=j$ and $r=0$. So the sequence of standard objects is exceptional. Now, assume that the simple object $\ob{S}_{i-1}\in\triang{}{\stan{}{0},\ldots,\stan{}{i-1}}$. The object $\ob{K}_i$ defined by the triangle 
\[
\ob{K}_i\to \stan{}{i}\to \ob{S}_i \to \ob{K}_i[1]
\]
is in $\Filt{\ob{S}_0,\ldots,\ob{S}_{i-1}}\subset \triang{}{\stan{}{0},\ldots,\stan{}{i-1}}$. Since $\ob{S}_0=\stan{}{0}$, this implies $\ob{S}_i\in\triang{}{\stan{}{0},\ldots,\stan{}{i}}$. Thus, the standard objects $(\stan{}{0},\ldots,\stan{}{n})$ of $\cat{H}$ form a full exceptional collection. The costandard sequence is left dual to the standard sequence by \cite[\S 2.3]{Bodzenta:2026aa}.

\noindent $(2)\Rightarrow (3)$: Immediate.

\noindent $(3)\Rightarrow (4)$: By \cite[Theorem 1.3 (1)]{Bodzenta:2026aa} the heart $\cat{H}$ is glued along the right dual of the pair exceptional collections. Then by \cite[Theorem 1.1 (2)]{Bodzenta:2026aa} it is serially faithful.

\noindent $(4)\Rightarrow (1)$:  Now suppose $\cat{H}\subset \cat{D}$ is  glued along a full exceptional collection and serially faithful for the associated admissible thick filtration. Then $\cat{H}$ is a length heart with $n+1$ simple objects and enough projectives and injectives by \cite[Proposition 4.8]{MR4349392}. Each $\cat{H}_r \coloneqq \cat{H} \cap \cat{D}_r$ is a Serre subcategory, and therefore is the extension closure of a subset of simple objects. Therefore  the chain $0=\cat{H}_{-1} \subset \cat{H}_0 \subset \cdots \subset \cat{H}_n=\cat{H}$ determines an ordering $\ob{S}_0,\ldots,\ob{S}_n$ of the simple objects. Moreover, since
\[
\Ext{\cat{H}_r}{*}{\ob{H}}{\ob{H'}} \cong \Ext{\cat{D}_r}{*}{\ob{H}}{\ob{H'}}
\cong \Ext{\cat{D}_{r+1}}{*}{\ob{H}}{\ob{H'}}   \cong \Ext{\cat{H}_{r+1}}{*}{\ob{H}}{\ob{H'}} 
\]
 for all $\ob{H},\ob{H'}\in \cat{H}_r$ the inclusions $\cat{H}_r \subset \cat{H}_{r+1}$ are homological. Hence $\cat{H}$ is a highest weight category for the given ordering by \cref{thm : criterion_hw}. Moreover $\cat{D}_r= \thick{}{\ob{S}_0,\ldots,\ob{S}_r}$  because the extension closure $\cat{H}_r$ of $\ob{S}_0,\ldots,\ob{S}_r$ is a bounded heart in $\cat{D}_r$.
\end{proof}

\begin{corollary}
Suppose $\cat{H}$ is a highest weight category. Then: 
\begin{enumerate}
\item  The sequence $(\stan{}{0},\ldots,\stan{}{n})$ of standard objects is a full exceptional collection in $\cat{D}^b(\cat{H})$ with left dual pair the sequence $(\costan{}{n},\ldots,\costan{}{0})$ of costandard objects;
\item  The category $\cat{H}$ is serially faithful and glued for the admissible thick filtration  $0=\cat{D}^b(\cat{H}_{-1}) \subset \cat{D}^b(\cat{H}_{0}) \subset \cdots \subset \cat{D}^b(\cat{H}_n)$ where $0=\cat{H}_{-1}\subset\cat{H}_0 \subset \cdots \subset \cat{H}_n=\cat{H}$ is the chain of Serre subcategories induced by the chosen ordering of simple objects. 
\end{enumerate}
\end{corollary}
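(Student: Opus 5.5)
The plan is to apply \cref{thm:fhw} with $\cat{D}=\cat{D}^b(\cat{H})$. This is an algebraic triangulated category, since it is the bounded derived category of an abelian category and so carries a canonical dg-enhancement. Inside it the heart $\cat{H}$ is faithful essentially by definition: the identity $\cat{D}^b(\cat{H})\to\cat{D}^b(\cat{H})$ is a realisation functor, and it is an equivalence. Since $\cat{H}$ is highest weight by hypothesis, condition (1) of \cref{thm:fhw} holds.

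For part (1), I will invoke the implication $(1)\Rightarrow(2)$ of \cref{thm:fhw} directly. It gives that the standard objects $(\stan{}{0},\ldots,\stan{}{n})$ form a full exceptional collection in $\cat{D}^b(\cat{H})$, and that the costandard objects $(\costan{}{n},\ldots,\costan{}{0})$ form its left dual.

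For part (2), I will use $(1)\Rightarrow(4)$, but I must check that the admissible thick filtration produced there is the one in the statement. In the proof of $(4)\Rightarrow(1)$, the filtration associated with the exceptional collection of standards is $\cat{D}_r=\thick{}{\stan{}{0},\ldots,\stan{}{r}}$. By the induction in the proof of $(1)\Rightarrow(2)$, or the triangle argument in \cref{prop:exceptional standards}, this equals $\thick{}{\ob{S}_0,\ldots,\ob{S}_r}$. I then identify $\thick{}{\ob{S}_0,\ldots,\ob{S}_r}$ with the essential image of $\cat{D}^b(\cat{H}_r)$. That image is well-defined as a full subcategory because \cref{thm : criterion_hw} makes each inclusion $\cat{H}_{r}\hookrightarrow\cat{H}_{r+1}$ homological, so by composition $\cat{H}_r\hookrightarrow\cat{H}$ is homological. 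Hence $\cat{D}^b(\cat{H}_r)\to\cat{D}^b(\cat{H})$ is fully faithful, by the Ext criterion of \cite[R\'emarque~3.1.17]{BBD}. Its image is the thick subcategory generated by the simples $\ob{S}_0,\ldots,\ob{S}_r$, because $\cat{H}_r$ is a length category with these simples. Intersecting with $\cat{H}$ gives back $\cat{H}_r$, which is a faithful heart in $\cat{D}^b(\cat{H}_r)$; this shows serial faithfulness directly.

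For gluing, \cref{prop:exceptional standards} applied with the full exceptional collection of standards from part (1) shows that $\cat{H}$ is glued along $(\stan{}{0},\ldots,\stan{}{n})$, hence glued for the same filtration. I expect the main point needing care to be the identification of $\cat{D}^b(\cat{H}_r)$ with a thick subcategory of $\cat{D}^b(\cat{H})$, namely the homological embedding step. Everything else is a direct citation of \cref{thm:fhw} and \cref{prop:exceptional standards}.
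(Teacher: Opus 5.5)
Your proposal is correct and matches the paper's implicit argument: apply \cref{thm:fhw} to $\cat{D}=\cat{D}^b(\cat{H})$, in which $\cat{H}$ is tautologically faithful, and read off part (1) from $(1)\Rightarrow(2)$ and part (2) from $(1)\Rightarrow(4)$ via $\thick{}{\stan{}{0},\ldots,\stan{}{r}}=\thick{}{\ob{S}_0,\ldots,\ob{S}_r}$. One minor point: \cref{thm : criterion_hw} as stated only asserts that \emph{some} homologically embedded chain exists, so for the specific chain $\cat{H}_r$ it is cleaner to take the homological embeddings, and hence $\thick{}{\ob{S}_0,\ldots,\ob{S}_r}\simeq\cat{D}^b(\cat{H}_r)$, directly from the serial faithfulness in (4).
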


\begin{remark}
Not every highest weight heart is serially faithful, or even faithful. For example the constructible sheaves on $\C\PP^1$ stratified by a point and its complement are highest weight but not faithful, see \cref{constr_sheaves_P1}.
\end{remark}

\section{Perverse sheaves}
\label{sec:perverse sheaves}

\subsection{Topologically stratified spaces} 
\label{sec: top strat}
The class of {\em topologically stratified spaces} was introduced in \cite{GMIH1,gm2}. The definition is inductive on the real dimension. A \defn{$0$-dimensional topologically stratified space} is a discrete union of points. A \defn{$d$-dimensional topologically stratified space} $X$ is a paracompact Hausdorff topological space endowed with a finite filtration by closed subsets of the form
\begin{equation*}
X=X_d\supset X_{d-1}\supset\ldots X_0\supset X_{-1}=\emptyset
\end{equation*}  
such that:
\begin{enumerate}[(i)]
    \item each difference  $X_i\smallsetminus X_{i-1}$ is a (possibly empty) topological $i$-manifold;
    \item each $x\in X_i\smallsetminus X_{i-1}$ has an open neighbourhood $V\subset X$  with a filtration preserving homeomorphism $V \cong \R^i \times C(L)$ where $L$ is a compact $(d-i-1)$-dimensional topologically stratified space.
 \end{enumerate}
The connected components of $X_i\smallsetminus X_{i-1}$ are the $i$-dimensional {\em strata} of $X$.  Let $\strata{X}$ be the set of strata. It is partially ordered by  $S\leq T \iff S\subset \overline{T}$. The \defn{depth} $\depth{X}$ is the height of this poset, \ie the length of a maximal chain in this partial order. 

 The stratified space $L=L_{d-i-1}\supset\cdots\supset L_0 \supset L_{-1}=\emptyset$ is the \defn{link} of the stratum containing $x$. The open cone $C(L) = L \times [0, 1)/L \times \{0\}$ is filtered by the $C(L_i)$ and the vertex $v$. The link of a stratum in a topologically stratified space need not be unique up to homeomorphism --- although it is for piecewise linear, {\it a fortiori} Whitney, stratified spaces --- see \cite[p128]{MR319207}. Let $S$ be a stratum and  $V$ be an open neighbourhood of $x\in S$ as above with  stratum preserving homeomorphism $V\to\R^{\dim(S)}\times C(L)$ mapping $V\cap S$ to $\R^{\dim(S)}\times \{v\}$ and $x$ to $(0,v)$. Let 
\begin{equation*}
\pi\colon V\smallsetminus S\cong\R^{\dim(S)}\times(C(L)\smallsetminus \{v\})\cong\R^{\dim(S)}\times(0,1)\times L\to L
\end{equation*}
be the projection and $\sigma\colon L\monic V\smallsetminus  S$ a choice of section. The strata of the link have the form $L_{S<T} \coloneqq \sigma^{-1}(T)$ for strata $T$ of $X$ with $S\subset\overline{T}$. We refer to these as the \defn{pairwise links}.

\subsection{Constructible derived categories} 
Let $\shf{X}$ be the abelian category of sheaves of $\kk$-vector spaces on a topologically stratified space $X$. The {\em constructible derived category} $\constr{c}{X}$ of $X$ is the full triangulated subcategory of the bounded derived category $\der{b}{X}=\der{b}{\shf{X}}$ consisting of constructible complexes. That is, the objects of $\constr{c}{X}$ are complexes of $\kk$-vector spaces on $X$ whose cohomology sheaves are locally-constant on each stratum of $X$. 

The constructible derived category $\constr{c}{X}$  is algebraic. This is because $\der{b}{X}=\der{b}{\shf{X}}$ admits a unique dg-enhancement by \cite[Theorem A]{Canonaco_2022} and any  full triangulated subcategory of an algebraic triangulated category is algebraic by \cite[Remark 3.2]{Canonaco_2017}.

Let $\imath\colon Z\hookrightarrow X$ be the inclusion of a closed union of strata and $\jmath\colon U=X\smallsetminus Z\hookrightarrow X$  the complementary open inclusion. Then extension by zero $\imath_*$ is a thick admissible inclusion and so by \cref{sec:thick subcats} determines a recollement 
\begin{equation}
\begin{tikzcd} [row sep={0.6cm},column sep={0.6cm}]\label{equation:6ff}
\constr{c}{Z}\ar[rr,"\imath_*","\perp"']&&
\constr{c}{X}\ar[ll,"\imath^!",bend left=35]\ar[ll,"\imath^*","\perp"', swap,bend right=50]\ar[rr,"\jmath^*","\perp"']&&
\constr{c}{U}.\ar[ll,"\jmath_*",bend left=35]\ar[ll,"\jmath_!","\perp"', swap,bend right=50]
\end{tikzcd}
\end{equation}
For brevity we abuse notation by denoting a functor and its left or right derived functor in the same way. The meaning will always be clear from the context. 

\subsection{Perversities and perverse sheaves}\label{subsec:perversities} 

Let $X$ be a $d$-dimensional topologically stratified space with finitely many strata. A {\em perversity on $X$} is a function $p\colon \strata{X} \to\Z$. The {\em dual perversity} is $p^*(S)=-\dim(S)-p(S)$ for any stratum $S\in \strata{X}$. 

A choice of total order on $\strata{X}$ refining the partial order determines a filtration of $X$ by the closed unions of strata $X_{T} \coloneqq \bigcup_{S\leq T} S$ for $T\in \strata{X}$. This in turn determines a filtration of $\constr{c}{X}$ by the admissible thick subcategories $\constr{c}{X_{T}}$. The successive quotients are the constructible derived categories $\constr{c}{S}$ for increasing $S\in \strata{X}$, where $S$ is considered as trivially stratified with one stratum. A perversity $p$ determines a bounded heart  $\loc{S}[-p(S)]$ in each $\constr{c}{X}$ consisting of shifted local systems. The $p$-perverse sheaves $\perv{p}{X}$  are the bounded heart in $\constr{c}{X}$ obtained by glueing these hearts \cite[\S 2.1.4]{BBD}.  More explicitly, the aisle and coaisle of the glued t-structure are
\begin{align*}
\pdl{p}{0} &= \{ \sh{E}\in\constr{c}{X} \ | \ \sh{H}^n(\imath_S^*\sh{E})=0 \ \ \mbox{if} \ \ n>p(S), \ \forall S\subset X\} \\
\pdg{p}{0} &= \{ \sh{E}\in\constr{c}{X} \ | \ \sh{H}^n(\imath_S^!\sh{E})=0 \ \ \mbox{if} \ \ n<p(S), \ \forall S\subset X\}, 
\end{align*}
where $\imath_S \colon S \hookrightarrow X$ is the inclusion and $\sh{H}^n\colon \constr{c}{S} \to \loc{S}$ the sheaf cohomology functor. 

The category $\perv{p}{X}$ is Deligne finite if and only if $X$ is a topologically stratified space with finitely many strata and the category of local systems on each stratum is Deligne finite. This is the case, for example, when each stratum has finite fundamental group \cite[Corollary 5.2]{MR4349392}. The above criterion is independent of the perversity.

We will only consider perversities which are related to the geometry of $X$, namely perversities which depend only on the dimension of the stratum and satisfy
\[
0 \leq p(S)-p(T) \leq \dim(T) - \dim(S),
\]
equivalently both $p$ and $p^*$ are decreasing functions of the dimension. In addition we normalise so that  $p(S)=0$ when $\dim(S)=0$.  We refer to these as \defn{Goresky--MacPherson (GM) perversities}. These are the perversities introduced in \cite{GMIH1} up to a shift of $-\dim(X)$. Our normalisation is the same as on \cite[p131]{dimca}, although in the latter only even-dimensional spaces are considered. 

Important examples of GM perversities include the \defn{zero perversity} $z(S)=0$, the \defn{top perversity} $t(S)=-\dim(S)$, and the \defn{upper and lower middle perversities} $n(S)=\left\lceil -\dim(S)/2 \right\rceil$ and $m(S)=\left\lfloor -\dim(S)/ 2\right\rfloor$ respectively. The zero and top perversity are dual, as are the upper and lower middle perversities.

\subsection{Faithful perversities}
\label{subsec:faithful perversities}

We review some cases in which the category of perverse sheaves is known to be faithful. We begin with the simplest situation: let  $X$ be a connected manifold trivially stratified with one stratum $X$. A perversity is an integer $p(X)$ and the $p$-perverse sheaves $\perv{p}{X} =  \loc{X}[-p(X)]$ are local systems placed in degree $p(X)$. When $X$ is $1$-connected this category is  equivalent to $\vect{\kk}$ and so has global dimension $0$. Therefore $\perv{p}{X}$  is faithful if and only if $\Ext{\constr{c}{X}}{i}{\kk_X}{\kk_X} \cong H^{i}(X; \kk)=0$ for all $i>0$, where $\kk_X$ is the constant sheaf on $X$ with stalk $\kk$. The case when $X$ is not $1$-connected is more complicated.

The fact that perverse sheaves on a contractible manifold are faithful has generalisations in topology and in complex geometry. The following topological generalisation was first observed by  MacPherson in unpublished lecture notes. 
\begin{theorem}[{\cite[Theorem~4.2]{MR1453053}}]
\label{thm:polischuk}
Let $X=|K|$ be the geometric realisation of a finite simplicial complex $K$ filtered by the skeleta, so that the strata are the interiors of the simplices of $K$. Then $\perv{p}{X} \subset \constr{c}{X}$ is faithful for any GM perversity $p$. 
\end{theorem}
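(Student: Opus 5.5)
The plan is to deduce faithfulness from \cref{thm:fhw}, via the implication $(3)\Rightarrow(1)$. That is, I will exhibit a dual pair of full exceptional collections lying inside $\perv{p}{X}$. This is legitimate because $\constr{c}{X}$ is algebraic, and the theorem in fact gives the stronger conclusion that $\perv{p}{X}$ is faithful highest weight.

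\textbf{Setup.} For each simplex $\sigma$, write $\jmath_\sigma\colon \sigma^\circ \hookrightarrow X$ for the inclusion of the open simplex. Set
\[
\Delta_\sigma := {\jmath_\sigma}_!\kk_{\sigma^\circ}[-p(\sigma)] \quad\text{and}\quad \nabla_\sigma := {\jmath_\sigma}_*\kk_{\sigma^\circ}[-p(\sigma)],
\]
writing $p(\sigma)$ for the value of $p$ on the stratum $\sigma^\circ$. Fix a total order on the simplices refining the face order.

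\textbf{Step 1: $\Delta_\sigma$ and $\nabla_\sigma$ are perverse.} I will use the explicit aisle and coaisle description of $\perv{p}{X}$, so only stalks and costalks at strata $S=\tau^\circ$ with $\tau$ a face of $\sigma$ need checking. The key input is that the pairwise link $L_{\tau<\sigma}$ is an open simplex of dimension $\dim\sigma-\dim\tau-1$. Consequently
\[
H^*(L_{\tau<\sigma})=\kk \text{ in degree } 0, \qquad H^*_c(L_{\tau<\sigma})=\kk \text{ in degree } \dim\sigma-\dim\tau-1.
\]

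For $\Delta_\sigma$, the stalks at proper faces vanish, so $\Delta_\sigma$ lies in the aisle. Its costalk $\imath_\tau^!\Delta_\sigma$ is computed by $H_c$ of the link, up to the normal shift $\dim\sigma-\dim\tau$. It is therefore concentrated in degree $\dim\sigma-\dim\tau+p(\sigma)$, which is $\geq p(\tau)$ precisely by the GM inequality $p(\tau)-p(\sigma)\le \dim\sigma-\dim\tau$.

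For $\nabla_\sigma$, the costalks vanish, so $\nabla_\sigma$ lies in the coaisle. Its stalk $\imath_\tau^*\nabla_\sigma$ is $H^*(L_{\tau<\sigma})[-p(\sigma)]$, concentrated in degree $p(\sigma)\le p(\tau)$, again by the GM condition.

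\textbf{Step 2: exceptionality, duality and fullness.} By adjunction,
\[
\Ext{}{*}{\Delta_\sigma}{\Delta_\sigma}\cong H^*(\sigma^\circ;\kk)=\kk,
\]
since the open simplex is contractible. Moreover $\Ext{}{*}{\Delta_\tau}{\Delta_\sigma}\cong H^*(\tau^\circ;\jmath_\tau^!{\jmath_\sigma}_!\kk)$, which vanishes unless $\tau$ is a face of $\sigma$. Ordering so that faces come first therefore gives an exceptional collection, and likewise for the $\nabla$'s in the reverse order.

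Hom-duality follows from
\[
\Ext{}{*}{\Delta_\sigma}{\nabla_\tau}\cong \Ext{}{*}{\kk_{\sigma^\circ}}{\jmath_\sigma^*{\jmath_\tau}_*\kk_{\tau^\circ}}[p(\sigma)-p(\tau)],
\]
which is $0$ for $\sigma\ne\tau$ (disjoint strata) and $\kk$ in degree $0$ for $\sigma=\tau$.

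Fullness uses the standard filtration by closed unions of strata. The successive cones of any $\sh{E}\in\constr{c}{X}$ are ${\jmath_\sigma}_!$ of complexes with locally constant cohomology on contractible strata, hence iterated extensions of shifts of $\kk_{\sigma^\circ}$. So the $\Delta_\sigma$ generate, and dually so do the $\nabla_\sigma$.

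\textbf{Conclusion and main obstacle.} With Steps 1 and 2, condition (3) of \cref{thm:fhw} holds, and faithfulness follows. The main obstacle is the link computation in Step 1: identifying $L_{\tau<\sigma}$ as an open simplex, and keeping track of the exact shift in $\imath_\tau^!{\jmath_\sigma}_!$ so that the degree matches the GM inequality on the nose. A secondary point is matching the indexing of the two sequences with the left-dual convention of \cite{Bodzenta:2026aa}. I will fix this by checking the Hom-duality characterisation directly, as in Step 2, rather than computing the mutations.
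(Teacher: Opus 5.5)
Your proposal is correct. It is essentially the route by which the paper itself recovers this result. The paper otherwise just cites Polishchuk, but the corollary immediately following \cref{thm:fhwp} gives the theorem for contractible strata with contractible pairwise links. Your argument is the proof of \cref{thm:fhwp}, steps (3)$\Rightarrow$(2)$\Rightarrow$(1), specialised to the case where those links are open simplices.

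There is one slip, in your Hom-duality display. The adjunction ${\jmath_\sigma}_!\dashv\jmath_\sigma^!$ gives $\jmath_\sigma^!{\jmath_\tau}_*$, not $\jmath_\sigma^*{\jmath_\tau}_*$. The latter is nonzero when $\sigma$ is a proper face of $\tau$: it is the stalk of $\nabla_\tau$ along $\sigma^\circ$ that you computed in Step 1. The vanishing you need still holds, for instance because $\jmath_\tau^*{\jmath_\sigma}_!=0$ for $\tau\neq\sigma$.
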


In the context of complex geometry Beilinson \cite{MR923133} proved that the middle perverse sheaves are a faithful heart in the category $\constr{\text{alg-c}}{X}$ of all algebraically constructible, \ie constructible with respect to any stratification by algebraic subvarieties, sheaves on a complex variety $X$. If we fix a stratification, as in this paper, then we have the following result --- see also  \cite[Corollary~3.3.2]{MR1322847} for an alternative proof when $X$ is smooth and the strata are affine linear spaces.
\begin{theorem}[{\cite[\S1.5]{MR2119139}}]
\label{thm:bbm}
Set $\kk=\C$. Let $X$ be a complex variety stratified  by algebraic subvarieties so that each stratum $S$ is smooth with $H^{>0}(S;\C)=0$ and $S\hookrightarrow X$ an affine embedding. Then the middle perverse sheaves $\perv{m}{X}\subset\constr{c}{X}$ are faithful. 
\end{theorem}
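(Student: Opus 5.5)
The statement is quoted from \cite[\S1.5]{MR2119139}, so the task is to reconstruct its proof. I would use the criterion recalled in \cref{subsec:FH}: the heart is faithful once the realisation functor $\der{b}{\perv{m}{X}}\to\constr{c}{X}$, which exists because $\constr{c}{X}$ is algebraic, is an equivalence. Equivalently, $\mathrm{Ext}$ in the heart agrees with $\mathrm{Ext}$ in $\constr{c}{X}$. By the standard effaceability argument (\cite[R\'emarque~3.1.17]{BBD}), it is enough to show the following: for every class $\xi\in\Ext{\constr{c}{X}}{k}{\sh{E}}{\sh{F}}$ with $k>0$ and $\sh{E},\sh{F}$ perverse, there is an epimorphism $\sh{E}'\twoheadrightarrow\sh{E}$ in $\perv{m}{X}$ on which $\xi$ restricts to zero.

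I would produce enough objects with good vanishing properties, namely projective-like perverse sheaves, by induction on the number of strata, via the recollement of \cref{equation:6ff}.

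\textbf{Step 1.} For a stratum $\imath_S\colon S\hookrightarrow X$, set $\stan{}{S}={\imath_S}_!\C_S[\dim_\C S]$. Since $\imath_S$ is an affine embedding, Artin vanishing makes ${\imath_S}_!$ and ${\imath_S}_*$ t-exact for the middle perversity, so $\stan{}{S}$ is perverse. For any perverse $\sh{F}$, adjunction gives
\[
\Ext{\constr{c}{X}}{k}{\stan{}{S}}{\sh{F}}\cong H^{k-\dim_\C S}(S;\imath_S^!\sh{F}).
\]
Here $\imath_S^!\sh{F}$ lies in perverse degrees $\ge 0$ on the smooth stratum $S$. Its cohomology sheaves are local systems, which are constant because $S$ has no higher cohomology; for the argument I would take $S$ simply connected, or otherwise pass to a finite cover. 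With $H^{>0}(S;\C)=0$, the right-hand side therefore vanishes for $k<0$.

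\textbf{Step 2.} Positive degrees are not yet controlled, so I would run Beilinson's gluing construction to build projective objects $\sh{P}_S$ in a truncated category $\perv{m}{X}_{\le N}$. Each $\sh{P}_S$ comes with a finite filtration by the $\stan{}{T}$, and satisfies $\Ext{\constr{c}{X}}{k}{\sh{P}_S}{\sh{F}}=0$ for all $k>0$ and all perverse $\sh{F}$. The construction goes by induction, gluing across the closed union $Z$ and the open stratum $U$ using $\jmath_!$, $\jmath_*$ and the maximal extension functor $\Xi$. The key input is that $\jmath$ is an affine embedding, which gives exactness of $\jmath_!$ and $\jmath_*$.

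\textbf{Step 3.} With such $\sh{P}_S$ covering every perverse sheaf, any class $\xi$ with $k>0$ dies on the cover, which gives effaceability and hence faithfulness.

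\textbf{Main obstacle.} The hard part is the vanishing of $\Ext^{>0}$ out of the glued projectives. Using $\C$-coefficients with $H^{>0}(S)=0$ handles the stratum itself, but the gluing step needs an inductive comparison of $\mathrm{Ext}$ groups between $Z$ and $X$. I would try first to prove faithfulness of $\perv{m}{Z}\subset\constr{c}{Z}$ by induction. I would then show that the $\mathrm{Ext}$ groups in $\perv{m}{X}$ between objects supported on $Z$ agree with those in $\perv{m}{Z}$, exploiting t-exactness of $\jmath_!$ and $\jmath_*$ for the affine open embedding. Finally I would conclude with the five lemma on the long exact sequences attached to the triangle $\jmath_!\jmath^*\to\id\to\imath_*\imath^*$.
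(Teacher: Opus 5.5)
The paper does not reprove this result. It quotes \cite[\S1.5]{MR2119139} and recovers it from \cref{thm:fhwp}, whose proof goes through verbatim for simply connected strata with $H^{>0}(S;\C)=0$. That route needs no projectives. Artin vanishing makes ${\imath_S}_!$ and ${\imath_S}_*$ t-exact, so $\stan{}{S}={\imath_S}_!\C_S[\dim_\C S]$ and $\costan{}{S}={\imath_S}_*\C_S[\dim_\C S]$ are both perverse. Adjunction then gives $\Ext{\constr{c}{X}}{r}{\stan{}{S}}{\costan{}{T}}\cong\delta_{ST}\,H^r(S;\C)$, which vanishes unless $S=T$ and $r=0$. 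So $\perv{m}{X}$ contains a dual pair of full exceptional collections, and it is faithful by \cite[Theorem 1.1]{Bodzenta:2026aa}. This $\Delta$--$\nabla$ orthogonality is exactly where the cohomological hypothesis does its work, and your proposal never uses the costandard objects.

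As written, your argument has a gap at its centre. Step~1 only yields $\mathrm{Ext}^{<0}$-vanishing, which holds for any two objects of any heart. Step~2 posits projectives $\sh{P}_S$ with $\Ext{\constr{c}{X}}{k}{\sh{P}_S}{\sh{F}}=0$ for $k>0$. Given enough projectives this is \emph{equivalent} to faithfulness, so it is the whole theorem; Beilinson's gluing and $\Xi$ produce objects, not this vanishing.

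Your fallback names the right intermediate claim, that $\perv{m}{Z}\subset\perv{m}{X}$ is homologically embedded. But t-exactness of $\jmath_!$ and $\jmath_*$ cannot give it on its own. In \cref{ex:infinite gldim} the embedding $\C^2\hookrightarrow\C\PP^2$ is affine with contractible source, yet $\mathrm{Ext}^2(\C_{\C\PP^1}[1],\C_{\C\PP^1}[1])$ is nonzero in $\perv{m}{\C\PP^2}$ and zero in $\perv{m}{\C\PP^1}\simeq\vect{\C}$.

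The missing step is that $\imath^*\sh{P}$ is perverse for every projective $\sh{P}\in\perv{m}{X}$, and this is where inductive faithfulness of $Z$ must enter. T-exactness of $\jmath_!$ puts $\imath^*\sh{P}$ in perverse degrees $[-1,0]$. Since $\mathrm{Ext}^1(\sh{P},\imath_*\sh{G})=0$, one gets an injection of $\Mor{}{{}^m\!H^{-1}\imath^*\sh{P}}{\sh{G}}$ into $\Ext{\constr{c}{Z}}{2}{{}^m\!H^{0}\imath^*\sh{P}}{\sh{G}}=\Ext{\perv{m}{Z}}{2}{{}^m\!H^{0}\imath^*\sh{P}}{\sh{G}}$. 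The latter vanishes because ${}^m\!H^0\imath^*\sh{P}$ is projective. Then $0\to\jmath_!\jmath^*\sh{P}\to\sh{P}\to\imath_*\imath^*\sh{P}\to 0$ is exact, and $\imath^*$ turns projective resolutions of objects on $Z$ into projective resolutions.

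Even then, the five lemma on $\jmath_!\jmath^*\to\id\to\imath_*\imath^*$ does not apply directly, because $\imath^*\sh{E}$ need not be perverse (take $\sh{E}=\jmath_{!*}\C_U[\dim_\C U]$). Instead, induct on the degree using $0\to\sh{K}\to\stan{}{U}\to\jmath_{!*}\C_U[\dim_\C U]\to 0$, with $\sh{K}$ supported on $Z$, and its dual with $\costan{}{U}$. Both $\stan{}{U}$ and $\costan{}{U}$ have vanishing higher $\mathrm{Ext}$ in $\constr{c}{X}$ against perverse sheaves.

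Finally, $H^{>0}(S;\C)=0$ does not force local systems to be constant: $\C\PP^2$ minus a smooth conic is $\C$-acyclic with $\pi_1\cong\Z/2$. So simple connectivity of the strata is a genuine extra hypothesis, not something a finite cover fixes.
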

\begin{remark}
\label{rmk:hw}
In the context of either of the above theorems it follows immediately that $\perv{p}{X}$ is serially faithful for the admissible thick filtration associated to any filtration of $X$ by closed unions of strata. Therefore $\perv{p}{X}$ is  highest weight by \cref{thm:fhw} because $\constr{c}{S} \simeq \cat{D}^b(\kk)$ for any stratum $S\subset X$ with vanishing higher cohomology.
\end{remark}

The following result is a common generalisation of \cref{thm:polischuk,thm:bbm}. We give a proof based on the results of \cite{Bodzenta:2026aa}. Many of the implications were previously known: \cite[Proposition 5.19]{ParshallScott} shows that conditions (2) and (3) below are equivalent, and \cite[Theorem 5.17]{ParshallScott} shows that (2) implies (1). Moreover, they show that (2) implies $\perv{p}{X}$ is highest weight (but not necessarily faithful) when the strata are $1$-connected with vanishing second cohomology. This latter condition is sufficient but not necessary, \eg the constructible sheaves on a space with $1$-connected strata are always highest weight, but only faithful when the higher cohomology of the pairwise links vanishes.

\begin{theorem}
\label{thm:fhwp}
Let $X$ be a topologically stratified space with finitely many contractible strata and denote by $\imath_S\colon S\to X$ the inclusion of a stratum into $X$.  The following are equivalent:
\begin{enumerate}
\item the category $\perv{p}{X}$ is faithful highest weight for some (hence any) total order refining the partial order of strata;
\item for any stratum $S\subset X$ the functors ${\imath_S}_*$ and ${\imath_S}_!$ are t-exact;
\item for any pair of distinct strata $S\subset\overline{T}$ the pairwise link $L_{S<T}$ satisfies 
\[
H^{>p(S)-p(T)}(L_{S<T})=0\ \text{and}\ H^{<p(S)-p(T)-1}_c(L_{S<T})=0.
\]
\end{enumerate}
\end{theorem}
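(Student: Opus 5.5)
We will route everything through \cref{thm:fhw}, using the glued structure of $\perv{p}{X}$ along a total order $S_0,\ldots,S_n$ of the strata refining the partial order. Since each stratum $S$ is contractible, $\constr{c}{S}\simeq\der{b}{\kk}$ is generated by the exceptional object $\kk_S$, and its heart is $\kk_S[-p(S)]$. The admissible thick filtration by the $\constr{c}{X_{S_r}}$ therefore has quotients $\der{b}{\kk}$. By \cref{prop:except coll} it comes from the full exceptional collection $\Delta_r:={\imath_{S_r}}_!\kk_{S_r}[-p(S_r)]$, where ${\imath_{S_r}}_!$ is the extension-by-zero functor $\jmath_!$ for the open stratum of $X_{S_r}$ followed by the closed pushforward. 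Dually, the objects $\nabla_r:={\imath_{S_r}}_*\kk_{S_r}[-p(S_r)]$ form a full exceptional collection for the reversed order. Adjunction and base change ($\imath_T^*{\imath_S}_!=0=\imath_T^!{\imath_S}_*$ for $T\neq S$) give the Hom-duality
\[
\Ext{}{d}{\Delta_i}{\nabla_j}\cong\Ext{}{d}{\kk_{S_i}}{\imath_{S_i}^*{\imath_{S_j}}_*\kk_{S_j}[p(S_i)-p(S_j)]},
\]
which is $\kk$ for $i=j$, $d=0$ and zero otherwise. So $(\nabla_n,\ldots,\nabla_0)$ is the left dual of $(\Delta_0,\ldots,\Delta_n)$, and the glued heart is glued along this collection.

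\emph{(2)$\Leftrightarrow$(3).} Condition (2) says exactly that every $\Delta_r$ and every $\nabla_r$ is perverse, because ${\imath_S}_!$ and ${\imath_S}_*$ are t-exact if and only if they send the generator $\kk_S[-p(S)]$ of the heart $\loc{S}[-p(S)]=\Filt{\kk_S[-p(S)]}$ into $\perv{p}{X}$. We test this stratum by stratum using the explicit aisle and coaisle of \cref{subsec:perversities}.
\begin{itemize}
\item For $\nabla_T$, the costalk condition at $S\neq T$ is automatic. The stalk condition at $S<T$ reads $\sh{H}^n(\imath_S^*{\imath_T}_*\kk_T)=0$ for $n>p(S)-p(T)$. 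The stalk of $\imath_S^*{\imath_T}_*\kk_T$ at $x\in S$ is $H^*(V\cap T)\cong H^*(\R^{\dim S}\times(0,1)\times L_{S<T})\cong H^*(L_{S<T})$, which gives the first vanishing in (3).
\item For $\Delta_T$, dually, $\imath_S^!{\imath_T}_!\kk_T$ at $x$ is computed by compactly supported cohomology of $V\cap T\cong\R^{\dim S+1}\times L_{S<T}$. This is $H_c^{*-\dim S-1}(L_{S<T})$ up to the local orientation shift coming from $\imath_S^!$ along the manifold direction $\R^{\dim S}$.
\end{itemize}
Bookkeeping the shift is what should produce the index $p(S)-p(T)-1$. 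Precisely, $H^n(\imath_S^!\Delta_T)$ involves $H_c^{n+p(T)-1}(L)$ after the $\R^{\dim S}$ factor is absorbed by $\imath_S^!$. Vanishing for $n<p(S)$ is then $H_c^{<p(S)-p(T)-1}(L)=0$.

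\emph{(2)$\Rightarrow$(1).} When all $\Delta_r,\nabla_r$ lie in $\perv{p}{X}$, the heart contains a dual pair of full exceptional collections. This is \cref{thm:fhw}(3), hence (1). Since condition (2) does not refer to the order, this also gives the ``some (hence any)'' clause once (1)$\Rightarrow$(2) holds for an arbitrary refining order.

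\emph{(1)$\Rightarrow$(2).} This is the main obstacle. By \cref{thm:fhw}(4), $\perv{p}{X}$ is glued along some full exceptional collection and serially faithful. I must show that the standard objects are exactly the $\Delta_r$, or at least that they are perverse. The intended route is to take \cref{thm:fhw}(2) for the given order: the standard objects of the highest weight structure are exceptional with duals the costandards.
\begin{itemize}
\item The simple $\ob{S}_r$ is the IC extension of $\kk_{S_r}[-p(S_r)]$. The standard object $\stan{}{r}$ lies in $\perv{p}{X_{S_r}}$ with top $\ob{S}_r$ and is projective there.
\item Restricting to the open stratum $S_r$ of $X_{S_r}$, we get $\jmath^*\stan{}{r}\cong\kk_{S_r}[-p(S_r)]$ because $\End(\stan{}{r})=\kk$.
\item Projectivity gives $\Ext{}{1}{\stan{}{r}}{\ob{S}_i}=0$ for $i<r$. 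Faithfulness upgrades this to $\Ext{}{*}{\stan{}{r}}{\perv{p}{X_{S_{r-1}}}}=0$, which by faithfulness of $\perv{p}{X_{S_{r-1}}}$ in $\constr{c}{X_{S_{r-1}}}$ gives $\stan{}{r}\in{}^\perp\constr{c}{X_{S_{r-1}}}$.
\item That left orthogonal is the image of $\jmath_!$, so $\stan{}{r}\cong\jmath_!\jmath^*\stan{}{r}=\Delta_r$ is perverse.
\end{itemize}
Dually $\costan{}{r}=\nabla_r$, which yields (2). The delicate point is this orthogonality step, which is where serial faithfulness is genuinely used.
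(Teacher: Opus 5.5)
Your proposal is correct. For $(2)\Leftrightarrow(3)$ and $(2)\Rightarrow(1)$ it follows the paper's argument:
\begin{enumerate}
\item t-exactness of ${\imath_S}_!$ and ${\imath_S}_*$ is tested on the generator $\kk_S[-p(S)]$;
\item stalks and costalks are identified with ordinary and compactly supported cohomology of pairwise links;
\item once all $\Delta_r,\nabla_r$ are perverse, the heart contains a dual pair of full exceptional collections.
\end{enumerate}
The paper proves fullness by induction on the simples and quotes \cite[Theorem 1.1]{Bodzenta:2026aa} directly rather than going through \cref{thm:fhw}, but this is the same argument.

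The genuine difference is $(1)\Rightarrow(2)$. The paper takes it from \cite[Theorem 1.3 and \S2.4]{Bodzenta:2026aa}: the standard and costandard objects are ${\imath_S}_!\kk_S[-p(S)]$ and ${\imath_S}_*\kk_S[-p(S)]$. You prove this identification directly from the recollement. You show $\stan{}{r}$ is left orthogonal to $\constr{c}{X_{S_{r-1}}}$, hence isomorphic to $\jmath_!\jmath^*\stan{}{r}$. Then $\jmath^*\stan{}{r}\cong\kk_{S_r}[-p(S_r)]$ because $[\stan{}{r}:\ob{S}_r]=\dim\End{\stan{}{r}}=1$, and dually for $\costan{}{r}$. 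Your route is self-contained and works verbatim for an arbitrary refining order, so the ``some (hence any)'' clause comes out cleanly. It also makes explicit that only faithfulness along the geometric filtration is used. The paper's version is shorter but hides exactly this step in the citation.

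Some slips to repair.
\begin{enumerate}
\item In your Hom-duality display, the adjunction ${\imath_{S_i}}_!\dashv\imath_{S_i}^!$ produces $\imath_{S_i}^!{\imath_{S_j}}_*$, not $\imath_{S_i}^*{\imath_{S_j}}_*$. With $\imath^*$ the right-hand side is link cohomology, which need not vanish for $S_i<S_j$. Your base-change remark already uses the correct functor.
\item The costalk is $\sh{H}^n(\imath_S^!\Delta_T)_x\cong H_c^{n-p(T)-1}(L_{S<T})$, not $H_c^{n+p(T)-1}$. Your final vanishing range is nonetheless the right one.
\item In the orthogonality step, vanishing of $\mathrm{Ext}^1$ alone does not upgrade to all degrees. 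What you need is that projectivity in $\cat{A}_r=\Filt{\ob{S}_0,\ldots,\ob{S}_r}$ kills $\Ext{\cat{A}_r}{k}{\stan{}{r}}{-}$ for every $k\geq 1$. You also need these groups to agree with $\Ext{\constr{c}{X}}{k}{\stan{}{r}}{-}$.
\item That agreement is faithfulness of $\cat{A}_r$ in $\constr{c}{X_{S_r}}$ for the \emph{given} order. It follows from faithfulness of $\perv{p}{X}$ together with the homological inclusions $\cat{A}_r\subset\perv{p}{X}$ of a highest weight category. It does not follow from \cref{thm:fhw}(4), which concerns an unspecified exceptional collection.
\item Passing from $\perv{p}{X_{S_{r-1}}}$ to all of $\constr{c}{X_{S_{r-1}}}$ needs only that this heart generates, not that it is faithful.
\end{enumerate}
Most economically, \cref{thm:fhw}(2) for the given order already gives $\Ext{\constr{c}{X}}{*}{\stan{}{r}}{\stan{}{i}}=0$ for $i<r$. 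Combined with $\thick{}{\stan{}{0},\ldots,\stan{}{r-1}}=\constr{c}{X_{S_{r-1}}}$, this is exactly the orthogonality you need.
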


\begin{proof}
\noindent $(1)\Leftrightarrow (2)$:
Suppose $\perv{p}{X}$ is faithful highest weight in $\constr{c}{X}$. Then by \cite[Theorem 1.3 and \S 2.4]{Bodzenta:2026aa} the standard objects are ${\imath_S}_!\kk_S[-p(S)]$, and the costandard objects are  ${\imath_S}_*\kk_S[-p(S)]$. Since the strata are assumed contractible this suffices to show that ${\imath_S}_!$ and ${\imath_S}_*$ are t-exact.

Now assume ${\imath_S}_!$ and ${\imath_S}_*$ are t-exact. We show that the standard and costandard objects form full exceptional sequences in $\constr{c}{X}$. First
\[
\Ext{\constr{c}{X}}{r}{\stan{}{S}}{\stan{}{S}} \cong \Ext{\constr{c}{X}}{r}{{\imath_S}_!\kk_S[-p(S)]}{{\imath_S}_!\kk_S[-p(S)]} \cong H^r(S;\kk)\cong\begin{cases} \kk \quad r=0 \\ 0 \quad r\neq 0 \end{cases}
\]
because the strata are contractible. Hence $\stan{}{S}$ is exceptional. Moreover $\Ext{\constr{c}{X}}{*}{\stan{}{S}}{\stan{}{T}}=0$ when $T<S$ since  ${\imath^*_S}\stan{}{T}=0$. Therefore, the standard objects $\stan{}{S}$ form an exceptional sequence. A dual argument applies to the costandard objects.

We use induction to prove these exceptional sequences are full by showing that all the simple objects are in the triangulated closure. The base case is $\ob{S}_0=\stan{}{0}$. Now suppose $\ob{S}_{i-1}\in\triang{}{\stan{}{0},\ldots,\stan{}{i-1}}$. There is a triangle 
\[
\ob{K}_i\to \stan{}{i}\to \ob{S}_i \to \ob{K}_i[1]
\]
with $\ob{K}_i\in\Filt{\ob{S}_0,\ldots,\ob{S}_{i-1}}$. Hence $\ob{S}_i\in\triang{}{\stan{}{0},\ldots,\stan{}{i}}$, completing the induction. 

Since for $S\neq T$ we either have ${\imath^*_S}\costan{}{T}=0$ or ${\imath^*_T}\stan{}{S}=0$ we conclude
\[
\Ext{\constr{c}{X}}{r}{\stan{}{S}}{\costan{}{T}}\cong \begin{cases} \kk \quad \hbox{if} \quad S=T \quad \hbox{and} \quad r=0 \\ 0 \quad \hbox{otherwise.} \end{cases}
\]
Therefore, by \cite[\S2.3]{Bodzenta:2026aa} the sequence of costandard objects is the left dual exceptional sequence to the one of standards. Since $\stan{}{S},\costan{}{T}\in\perv{p}{X}$ for any $S,T \subset X$ the heart $\perv{p}{X}$ is faithful and highest weight by \cite[Theorem 1.1]{Bodzenta:2026aa}.

\noindent $(2)\Leftrightarrow (3)$: 
Since $T$ is simply-connected the extension ${\imath_T}_*$ is t-exact if and only if ${\imath_T}_*\kk_T[-p(T)]$ is perverse. In general ${\imath_T}_*$ is left t-exact, so ${\imath_T}_*\kk_T[-p(T)]$ is perverse if and only if
\[ 
\sh{H}^r(\imath_S^*{\imath_T}_*\kk_T[-p(T)]) = 0 \quad \text{for}\ r>p(S) \ \text{and}\ S \subset \overline{T}.
\]
The equivalence then follows since  $\sh{H}^r(\imath_S^*{\imath_T}_*\kk_T[-p(T)])$ is a locally constant sheaf on $S$ with stalk $H^{r-p(T)}(L_{S<T})$ --- see \cref{cohomology links} below. A dual argument relates the exactness of ${\imath_T}_!$ to the other vanishing condition.
\end{proof}

\begin{corollary}
Suppose $X$ is a topologically stratified space with finitely many contractible strata and contractible pairwise links. Then $\perv{p}{X}$ is a faithful highest weight heart in $\constr{c}{X}$ for any GM-perversity $p$.
\end{corollary}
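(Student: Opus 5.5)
The plan is to reduce the corollary to condition (3) of \cref{thm:fhwp}, which already gives condition (1), namely that $\perv{p}{X}$ is faithful highest weight. Since the strata are assumed finitely many and contractible, the hypotheses of \cref{thm:fhwp} are satisfied, so it suffices to check that for each pair of distinct strata $S\subset\overline{T}$ the pairwise link satisfies
\[
H^{>p(S)-p(T)}(L_{S<T})=0 \quad\text{and}\quad H_c^{<p(S)-p(T)-1}(L_{S<T})=0 .
\]

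For the first condition: $L_{S<T}$ is contractible, so $H^r(L_{S<T})=0$ for $r\neq 0$ and $H^0(L_{S<T})\cong\kk$. The GM condition gives $p(S)-p(T)\geq 0$, because $p$ is decreasing in dimension and $\dim S<\dim T$. Hence every degree $r>p(S)-p(T)$ is positive, and the corresponding cohomology vanishes.

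For the second condition I use that $L_{S<T}=\sigma^{-1}(T)$ is a stratum of the link, hence an open topological manifold of dimension $m=\dim T-\dim S-1$. Poincar\'e duality for the contractible (hence orientable) manifold $L_{S<T}$ gives $H^r_c(L_{S<T})\cong H_{m-r}(L_{S<T})$. This vanishes unless $r=m$, in which case it is $\kk$. So the condition amounts to $m\geq p(S)-p(T)-1$, that is, $p(S)-p(T)\leq \dim T-\dim S$. This is exactly the other half of the GM inequality.

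The only point needing care is justifying that the pairwise link is a manifold of dimension $\dim T-\dim S-1$. I expect this to follow from the local product structure $V\cong\R^{\dim S}\times C(L)$: near points of $T$, the space $V\cap T\cong \R^{\dim S}\times(0,1)\times L_{S<T}$ is a manifold of dimension $\dim T$, and so $L_{S<T}$ is a manifold of dimension $\dim T-\dim S-1$. Note that a nonempty contractible manifold is automatically orientable, so no twisted coefficients enter. Combining the two vanishing statements with \cref{thm:fhwp} $(3)\Rightarrow(1)$ completes the argument.
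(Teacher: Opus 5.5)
Your proposal is correct and follows the paper's argument: you verify condition (3) of \cref{thm:fhwp} using contractibility of $L_{S<T}$ and the GM inequalities $0\leq p(S)-p(T)\leq \dim(L_{S<T})+1$. You also make explicit the Poincar\'e duality step the paper leaves implicit, namely that $H^r_c(L_{S<T})$ is $\kk$ for $r=\dim L_{S<T}$ and zero otherwise.

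One justification needs tightening. You cannot conclude that $L_{S<T}$ is a manifold by cancelling the Euclidean factor in $V\cap T$, since a space whose product with $\R$ is a manifold need not itself be one. Instead, $L$ is itself topologically stratified and the chart is filtration preserving, so $L_{S<T}$ is open in $L_{\dim T-\dim S-1}\smallsetminus L_{\dim T-\dim S-2}$. That set is a manifold of dimension $\dim T-\dim S-1$ by definition.
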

\begin{proof}
Since the links are contractible  $H^{>p(S)-p(T)}(L_{S<T})=0=H^{<p(S)-p(T)-1}_c(L_{S<T})$ because $0\leq p(S)-p(T) \leq \dim (L_{S<T})+1$. The result follows from \cref{thm:fhw}.
\end{proof}

There are highest weight perverse hearts which are not faithful, and  (non-perverse) faithful glued hearts in $\constr{c}{X}$ which are not highest weight.
\begin{example}
    Let $X=S^4$ stratified by a point and its complement.  The middle perverse sheaves $\perv{m}{S^4}$ are semi-simple, hence highest weight, but not faithful because
    \[
    0 = \Ext{\perv{p}{S^1}}{4}{\kk_{S^4}[2]}{\kk_{S^4}[2]} \neq 
    \Ext{\constr{c}{S^1}}{4}{\kk_{S^4}[2]}{\kk_{S^4}[2]} = H^4(S^4,\kk).  
    \]
\end{example}
 The following example, described from a more algebraic perspective, can be found in \cite[p284]{MR987824}. It shows that there are faithful glued hearts in a constructible derived category which have finitely many simple objects, enough projectives and finite  global dimension, but  which are not highest weight and therefore also not serially faithful for any ordering of their simple objects. However, the heart in the example is glued from an admissible thick filtration different from the geometric one, and therefore  is not a category of perverse sheaves for any perversity; we do not know of a perverse sheaf example with the above properties.
\begin{example}
\label{middle_perv_CP2}
Let $X=\C\PP^2\supset \C\PP^1\supset\C\PP^0$ be the projective plane stratified by a complete flag and $\cat{D}=\constr{c}{X}$ its constructible derived category. By \cite[p.217]{MR1862802}, the middle perverse sheaves $\cat{H}=\perv{m}{\C\PP^2}$ are equivalent to finite-dimensional modules  over the path algebra $\kk Q/I$  of the quiver  with relations 
\[
Q \coloneqq 
\begin{tikzcd}
0 \ar[r,bend left=30,"\alpha"] & 1 \ar[r,bend left=30,"\gamma"] \ar[l,bend left=30,"\beta"] &2 \ar[l,bend left=30,"\delta"]
\end{tikzcd}
\qquad 
,
\qquad
I=\langle \alpha\gamma, \delta\beta, \delta\gamma, \beta\alpha-\gamma\delta\rangle.
\]
 The heart $\cat{H}$ is highest weight for the ordering of the simple objects by increasing dimension of support, corresponding to the vertex ordering $0< 1< 2$ in the quiver description, and has global dimension four. By  \cref{thm:fhw} it is serially faithful  for the admissible thick filtration $0 \subset \constr{c}{\C\PP^0} \subset \constr{c}{\C\PP^1} \subset \constr{c}{\C\PP^2}$. 
 
The category $\cat{D}$ has another faithful, global dimension four heart. Let $\ob{S}_i$, $\ob{P}_i$, $\Delta_i$ and $\nabla_i$ denote respectively the $i$th simple, its projective cover, the $i$th standard and $i$th costandard in $\perv{m}{X}$ with respect to the above vertex ordering. The  perverse sheaf $\sh{E} = \ob{P}_0\oplus \ob{P}_1\oplus \Delta_1$ is a tilting object in $\cat{D}$. It determines an equivalence $\cat{D} \simeq \cat{D}^b(\mod{\End{\sh{E}}})$. Let $\cat{H}'$ be the faithful heart in $\cat{D}$ corresponding to the module heart under this equivalence, \ie the full subcategory on $\{ \sh{F}\in \cat{D} \mid \Ext{\cat{D}}{\neq 0}{\sh{E}}{\sh{F}}=0\}$. Alternatively, $\cat{H}'$ is the Happel--Reiten--Smal\o\ tilt of $\cat{H}$ at the torsion-free subcategory generated by $\ob{S}_2$. This is faithful by \cite[Theorem A]{MR3989131}. 

The heart $\cat{H}'$ is equivalent to finite dimensional modules over the path algebra $\kk Q'/I'$ of the quiver with relations
\[
Q'\coloneqq
\begin{tikzcd}[row sep=small]
& 1' \ar[dr,"\beta'"] \\
0'\ar[ur,"\alpha'"]  & & 2' \ar[ll,"\gamma'"] 
\end{tikzcd}
\qquad 
,
\qquad
I'=\langle \alpha'\beta'\gamma'\alpha', \gamma'\alpha'\beta'\rangle.
\]
It has global dimension four and is not highest weight for the ordering $0' < 1' < 2'$ (nor for any other ordering). The standard objects for the above ordering form an exceptional collection  $(\ob{S}_0',\ob{S}_1',\ob{P}_2')$  in $\cat{D}$ so that 
\[
0\subset \thick{}{\ob{S}_0'} \subset \thick{}{\ob{S}_0',\ob{S}_1'} \subset \thick{}{\ob{S}_0',\ob{S}_1',\ob{P}_2'}=\cat{D}
\]
is an admissible thick filtration with factors $\cat{D}^b(\kk)$ by \cref{prop:except coll}. Moreover, $\cat{H}'$ is glued with respect to this filtration by \cref{prop:exceptional standards}. Nevertheless, $\Filt{\ob{S}_0',\ob{S}_1'}$ is not a faithful heart in $\thick{}{\ob{S}_0',\ob{S}_1'}$ because  $\Ext{\cat{H}'}{2}{\ob{S}_0'}{\ob{S}_1'}\cong 0 \neq \kk \cong \Ext{\cat{D}}{2}{\ob{S}_0'}{\ob{S}_1'}$. Therefore $\cat{H}'$ is a faithful glued heart which is not serially faithful.

Indeed, since $\cat{H}'$ is not highest weight for any ordering of its simple objects \cref{thm:fhw} shows that it cannot be a serially faithful glued heart for {\em any} admissible thick filtration with quotients $\cat{D}^b(\kk)$.  

One can check that the costandard objects do not form an exceptional sequence, indeed $\nabla_{1'}$ is not exceptional. Thus the criterion of \Cref{thm:fhw} for the heart to be faithful highest weight fails. 
\end{example}

Finally we note there are faithful perverse hearts which are not equivalent to module categories over finite dimensional algebras. The assumption that the strata have no higher cohomology is not necessary, nor is it necessary for $\perv{p}{X}$ to have enough projectives, equivalently for the strata to have finite fundamental groups.
\begin{example}
    Let $\kk=\C$ and $X=\C \supset \{0\}$ with $\imath \colon \{0\} \to X$ and $\jmath\colon \C^*\to X$ the complementary closed and open inclusions.  For $M\in GL_n(\C)$ let $\sh{L}(M)$ be the local system on $\C^*$ with stalk $\C^n$ and monodromy $M$. The category $\loc{\C^*}$ of local systems, {\it a fortiori} the category $\perv{p}{X}$ of perverse sheaves, does not have enough projectives for any perversity $p$ since $\pi_1(U)\cong\mathbb Z$. The constructible, or zero perverse,  sheaves are not faithful, but the middle perverse sheaves are.
        \begin{enumerate}
        \item The simple objects of the zero perverse sheaves $\perv{z}{\C}$ are the extension by zero $\ob{S}_{\lambda}\coloneqq\jmath_! \sh{L}(\lambda)$ for $\lambda\in \C$ and $\ob{S}_0\coloneqq \imath_*\C_{\pt}$. Using the fact that  $\imath^! \ob{S}_{\lambda}\cong  \C[-1]\oplus \C[-2]$ when $\lambda=1$ and vanishes otherwise we compute
        	\[
	\Ext{}{1}{\ob{S}_{\lambda}}{\ob{S}_{\mu}}\cong\begin{cases} \C \quad \hbox{if}\quad \lambda=\mu\neq0 \quad \hbox{or}\quad \lambda=0,\mu=1 \\ 0 \quad\hbox{otherwise} 
	\end{cases}
	\]
	and 
	$
	\Ext{\constr{c}{\C}}{2}{\ob{S}_0}{\ob{S}_1}\cong  \C.
	$
	In contrast $\Ext{\perv{z}{\C}}{2}{\ob{S}_0}{\ob{S}_1}=0$ because there is a perverse sheaf $\sh{E}$ such that the central diamond below is cartesian and cocartesian
	\[
	\begin{tikzcd}[row sep=small]
	&& \sh{E} \ar[->>]{dr} && \\
	& \jmath_!\sh{L}\left(\begin{smallmatrix}1&1\\0&1\end{smallmatrix}\right) \ar[->>]{dr} \ar[hook]{ur}&& \C_X \ar[->>]{dr}& \\
	\ob{S}_1 \ar[hook]{ur} && \ob{S}_1 \ar[hook]{ur} && \ob{S}_0
	\end{tikzcd}
	\]
	 Therefore $\perv{z}{\C}\subset\constr{c}{\C}$ is not faithful and has global dimension $1$.
        \item The simple objects of the middle perverse sheaves  $\perv{m}{\C}$ are $\ob{S}_0\coloneqq \C_0$ and the intermediate extensions
        \[
        \ob{S}_{\lambda}\coloneqq \jmath_{!*}\sh{L}(\lambda)=
        \begin{cases} 
        \C_X[1] & \lambda=1 \\ 
        \jmath_!\sh{L}(\lambda)[1] & \lambda\neq 1.
        \end{cases} 
        \]
        The only non-vanishing Ext groups are $\Ext{\constr{c}{\C}}{1}{\ob{S}_\lambda}{\ob{S}_{\mu}} \cong \C$ when $\lambda=0, \mu=1$ or $ \lambda=\mu\neq 0,1$ or $\lambda=1,\mu=0$. In particular, since all second Ext groups vanish, $\perv{m}{\C}\subset \constr{c}{\C}$ is faithful of global dimension $1$. It is evidently serially faithful, but it is not highest weight because the open stratum $\C^*$ is not simply-connected. 
    \end{enumerate}
\end{example}  

\subsection{A vanishing result}
\label{subsec:vanishing}

The following basic vanishing result for the hypercohomology of a perverse sheaf will be used in the next section to prove that faithful perverse hearts have global dimension bounded by the dimension of the underlying space. It does not seem easy to find this result in the literature in this generality. See \cite[Proposition 5.2.13 and Corollary 5.2.14]{dimca} for the case of analytic spaces.

\begin{proposition}
\label{prop:cohocptspace}
Let $X$ be a compact topologically stratified space and $\sh{E}\in\perv{p}{X}$. Then for any perversity for which both $p$ and $p^*$ are decreasing functions of the dimension, in particular for all GM-perversities, 
\begin{equation*}
H^k(X;\sh{E})=0 \ \text{for}\ 
\begin{cases} 
k<p(X) \\ 
k>-p^*(X).
\end{cases}
\end{equation*}
\end{proposition}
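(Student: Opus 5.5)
The plan is to prove a slightly stronger statement by induction on the number of strata: for every $\sh{E}\in\pdg{p}{0}$ we have $H^k(X;\sh{E})=0$ for $k<p(X)$, and for every $\sh{E}\in\pdl{p}{0}$ we have $H^k(X;\sh{E})=0$ for $k>-p^*(X)$. Here I read $p(X)$ and $p^*(X)$ as the values of $p$ and $p^*$ on the top-dimensional strata. Since $p$ and $p^*$ are decreasing functions of the dimension, this gives, for every stratum $S$,
\[
p(X)=\min_S p(S) \quad\text{and}\quad -p^*(X)=\max_S\bigl(\dim(S)+p(S)\bigr).
\]
This reduction to the two half-categories is what makes the induction go through: the restrictions $\imath^!\sh{E}$ and $\imath^*\sh{E}$ to a closed union of strata are in general no longer perverse, but they stay in the respective half. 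In the inductive step, choose a stratum $S$ that is open in $X$, i.e.\ maximal in the partial order. Let $\jmath\colon S\hookrightarrow X$ be its inclusion and $\imath\colon Z=X\smallsetminus S\hookrightarrow X$ the complementary closed inclusion. Then $Z$ is again a compact topologically stratified space with fewer strata. The perversity restricted to $Z$ still has $p$ and $p^*$ decreasing, and its extreme values $\min p$ and $\max(\dim+p)$ over the strata of $Z$ are bounded by those of $X$. The base case of a single stratum is the same argument with $Z=\emptyset$.

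For the lower bound, take $\sh{E}\in\pdg{p}{0}$ and use the triangle $\imath_*\imath^!\sh{E}\to\sh{E}\to\jmath_*\jmath^*\sh{E}\to$ together with its long exact sequence in hypercohomology. By the definition of the glued t-structure, $\imath^!\sh{E}\in\pdg{p}{0}(Z)$, so by induction $H^k(Z;\imath^!\sh{E})=0$ for $k<p(X)$. On the other side, $H^k(X;\jmath_*\jmath^*\sh{E})=H^k(S;\jmath^*\sh{E})$, and $\jmath^*\sh{E}=\imath_S^!\sh{E}$ has cohomology sheaves only in degrees $\geq p(S)\geq p(X)$. Sheaf cohomology lives in non-negative degrees, so the hypercohomology spectral sequence gives $H^k(S;\jmath^*\sh{E})=0$ for $k<p(S)$. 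This part does not even use compactness.

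For the upper bound, take $\sh{E}\in\pdl{p}{0}$ and use $\jmath_!\jmath^*\sh{E}\to\sh{E}\to\imath_*\imath^*\sh{E}\to$. Again $\imath^*\sh{E}\in\pdl{p}{0}(Z)$, so induction applies to it, since $Z$ is compact. Because $X$ is compact, $H^k(X;\jmath_!\jmath^*\sh{E})\cong H^k_c(S;\jmath^*\sh{E})$. The complex $\jmath^*\sh{E}$ has locally constant cohomology sheaves in degrees $\leq p(S)$, and compactly supported cohomology of the topological $\dim(S)$-manifold $S$ with locally constant coefficients vanishes above degree $\dim(S)$. The spectral sequence then gives vanishing for $k>\dim(S)+p(S)=-p^*(S)\geq -p^*(X)$. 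Combining the two halves, and noting that a perverse sheaf lies in both $\pdl{p}{0}$ and $\pdg{p}{0}$, proves the statement.

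The main obstacle I expect is the input on compactly supported cohomological dimension: that $H^k_c(S;\sh{L})=0$ for $k>\dim(S)$ for an arbitrary topological (not necessarily triangulable) manifold $S$ and an arbitrary local system $\sh{L}$. My approach would be to cite the standard fact that $c$-soft cohomological dimension of an $n$-manifold is $n$, for instance via Verdier duality or Kashiwara--Schapira. I would also take care with the convention for $p(X)$, so that monotonicity of $p$ and $p^*$ gives exactly the inequalities used on both $S$ and $Z$.
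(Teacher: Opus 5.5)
Your proof is correct, but it takes a different route from the paper's. The paper does not induct on strata. For $\pi\colon X\to\pt$, it notes that $p(X)\le p(S)$ and $p^*(X)\le p^*(S)$ for every stratum. This gives $\pi^*\cat{D}^{<p(X)}(\pt)\subset{}^{p}\cat{D}^{<0}(X)$ and $\pi^*\cat{D}^{<p^*(X)}(\pt)\subset{}^{p^*}\cat{D}^{<0}(X)$. Verdier duality turns the second inclusion into $\pi^!\cat{D}^{>-p^*(X)}(\pt)\subset{}^{p}\cat{D}^{>0}(X)$. The adjunctions $\pi^*\dashv\pi_*$ and, by compactness, $\pi_*=\pi_!\dashv\pi^!$ then force $\pi_*\sh{E}\in\cat{D}^{\geq p(X)}(\pt)\cap\cat{D}^{\leq -p^*(X)}(\pt)$.

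Your d\'evissage over an open stratum replaces Verdier duality by its stratumwise shadow, the bound $H^k_c(S;-)=0$ for $k>\dim S$. That bound is standard (the $c$-soft dimension of a topological $n$-manifold is $n$), so the obstacle you anticipate is not a real gap. The paper's argument is shorter and global. Yours is more elementary and shows exactly where compactness and the manifold dimension enter: only in the upper bound, via $H^*(X;\jmath_!-)\cong H^*_c(S;-)$. In exchange, yours needs the standard facts that a maximal stratum is open and that its complement is again a compact topologically stratified space.

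Both arguments use the monotonicity of $p$ and $p^*$ only through $p(X)=\min_S p(S)$ and $-p^*(X)=\max_S(\dim S+p(S))$. Both also yield your one-sided refinements, since the paper's lower bound uses only $\sh{E}\in{}^{p}\cat{D}^{\geq 0}(X)$ and its upper bound only $\sh{E}\in{}^{p}\cat{D}^{\leq 0}(X)$.

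One slip: in the upper-bound step the inequality should read $-p^*(S)\le -p^*(X)$, not $\geq$. This is what your own formula for $-p^*(X)$ gives and what the deduction needs. Equality can fail, because the open stratum you remove need not be top-dimensional.
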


\begin{proof} 
Since $p$ is a GM-perversity $p^*(X)\leq p^*(S)$ and $p(X)\leq p(S)$ for all strata $S\subset X$. Therefore $\cat{D}^{<\dups(X)}(S)\subset \cat{D}^{<p^*(S)}(S)$ and $\cat{D}^{<p(X)}(S)\subset \cat{D}^{<p(S)}(S)$ for all strata $S\subset X$. Hence $\cat{D}^{<\dups(X)}(X)\subset {^{\dups}}\cat{D}^{<0}(X)$ and $\cat{D}^{<p(X)}(X)\subset {^{p}}\cat{D}^{<0}(X)$. 

Let $\pi\colon X \to \pt$ be the map to a point. By the above  $\pi^*\sh{F}\in{^{\dups}\cat{D}}^{<0}(X)$ whenever $\sh{F}\in\cat{D}^{<\dups(X)}(\pt)$ and by duality $\pi^!\sh{F}\in{}^{p}\cat{D}^{>0}(X)$ whenever $\sh{F}\in\cat{D}^{>-p^*(X)}(\pt)$. Similarly, $\pi^*\sh{G}\in{}^{p}\cat{D}^{<0}(X)$ whenever $\sh{G}\in\cat{D}^{<p(X)}(\pt)$. 

For compact $X$ the functor $\pi_*$ is left adjoint to $\pi^!$ and right adjoint to $\pi^*$. Therefore  $\Mor{}{\pi_*\sh{E}}{\sh{F}}\cong\Mor{}{\sh{E}}{\pi^!\sh{F}}=0$ whenever $\sh{F}\in\cat{D}^{>-p^*(X)}(\pt)$ and similarly $\Mor{}{\sh{G}}{\pi_*\sh{E}}\cong\Mor{}{\pi^*\sh{G}}{\sh{E}}=0$
 whenever $\sh{G}\in\cat{D}^{<p(X)}(\pt)$. Thus 
 \[
 \pi_*\sh{E}\in  \cat{D}^{\geq p(X)}(\pt) \cap \cat{D}^{\leq -p^*(X) }(\pt).
 \]
  The claimed vanishing for $H^k(X;\sh{E})=H^k(\pi_*\sh{E})$ follows immediately.
\end{proof}

\subsection{Upper bounds on global dimension}
\label{section:GD} 

Let $X$ be a topologically stratified space with finitely many strata. In general $\perv{p}{X}$ may have infinite global dimension, see \cref{ex:infinite gldim}. When $\perv{p}{X}$ is faithful  we show its global dimension is bounded by $\dim X$.

The proof will be by induction on the depth. For the inductive step we need a vanishing result for the restriction of a perverse sheaf to the link of a stratum which we now prove. 

Let $\imath \colon S\monic X$ be the inclusion of a closed stratum and $\jmath\colon X \smallsetminus S\monic X$ the complementary open inclusion. Let $\pi \colon V\smallsetminus S \to L$ and $\sigma \colon L \to V \smallsetminus S$ be the projection from a deleted distinguished neighbourhood of $x$ to the link and a section of it respectively --- see \cref{sec: top strat}. Note that $\sigma$ is a normally nonsingular inclusion of codimension $\dim(S)+1$, see \cite[\S 5.4]{gm2}. 

The strata of the link have the form $L_{S<T} \coloneqq \sigma^{-1}(T)$ for strata $T$ of $X$ with $S\subset\overline{T}$. 
Given a perversity $p$ on $X$ we define a  perversity $\pbar$ on the link $L$ by  $\pbar (L_{S<T}) \coloneqq p(T)$. The following lemma shows this  perversity is a GM-perversity up to an overall shift.
\begin{lemma}
If $p$ and $p^*$ are decreasing functions of the dimension then so are $\pbar$ and $\pbar^*$. 
\end{lemma}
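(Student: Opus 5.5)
The plan is to reduce everything to a comparison of dimensions of strata of $L$ with dimensions of the corresponding strata of $X$. For a stratum $T$ of $X$ with $S\subset\overline{T}$, the stratum $L_{S<T}=\sigma^{-1}(T)$ of the link has dimension
\[
\dim(L_{S<T}) = \dim(T)-\dim(S)-1.
\]
This follows from the local product structure $V\smallsetminus S\cong \R^{\dim(S)}\times(0,1)\times L$: the stratum $T\cap V$ corresponds to $\R^{\dim(S)}\times(0,1)\times L_{S<T}$, so the dimension drops by exactly $\dim(S)+1$. Equivalently, one can use that $\sigma$ is normally nonsingular of codimension $\dim(S)+1$. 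In particular, the map $L_{S<T}\mapsto T$ preserves strict and weak order of dimensions, shifting them all by the constant $-(\dim(S)+1)$.

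\emph{Monotonicity of $\pbar$.} Let $L_{S<T}$ and $L_{S<T'}$ satisfy $\dim(L_{S<T})\le \dim(L_{S<T'})$. Then $\dim(T)\le\dim(T')$, so $p(T)\ge p(T')$ because $p$ is decreasing. That is, $\pbar(L_{S<T})\ge\pbar(L_{S<T'})$.

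\emph{Monotonicity of $\pbar^*$.} Compute
\[
\pbar^*(L_{S<T}) = -\dim(L_{S<T})-p(T) = -\dim(T)+\dim(S)+1-p(T) = p^*(T)+\dim(S)+1.
\]
So $\pbar^*$ is $p^*$ transported along $L_{S<T}\mapsto T$, up to the constant shift $\dim(S)+1$. It is therefore decreasing in dimension, again because dimensions are compared consistently. This also explains the remark before the lemma: $\pbar$ is a GM-perversity up to an overall shift, with the shift needed to restore the normalisation at $0$-dimensional strata.

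One point to check is whether "decreasing function of the dimension" also requires the function to depend only on dimension. If it does, one needs that two link strata of equal dimension come from strata $T,T'$ of equal dimension. That holds by the displayed formula, since $p$ depends only on $\dim T$. I expect no real obstacle here. The only step needing care is justifying the dimension formula for pairwise links, and I would take it directly from the conical chart in the definition of a topologically stratified space.
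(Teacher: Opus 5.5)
Your proposal is correct and follows the paper's proof: monotonicity of $\pbar$ is immediate since $\dim(L_{S<T})=\dim(T)-\dim(S)-1$. The dual case follows from the identity $\pbar^*(L_{S<T})=p^*(T)+\dim(S)+1$, which is exactly the computation the paper gives.
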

\begin{proof} 
This is clear for $\pbar$. For the dual it follows from 
\begin{align*}
\pbar^*(L_{S<T}) 
&=-\pbar(L_{S<T})-\dim(L_{S<T})\\
&=-p(T)-\dim(T)+\dim(S)+1\\
&= p^*(T)+\dim(S)+1.\qedhere
\end{align*} 
\end{proof}
 
The \defn{restriction of $\sh{E}\in\constr{c}{X}$ to $L$}  in $\constr{c}{L}$ is the pullback $\sh{E}|_{L} \coloneqq \sigma^*(\sh{E}|_{V\smallsetminus S})$ along the section of the restriction of $\sh{E}$ to the open subset $V\smallsetminus S$.

\begin{lemma}
The isomorphism class of $\sh{E}|_{L}$ is independent of the choice of the section $\sigma$.
\end{lemma}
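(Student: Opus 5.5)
The plan is to compare two sections through a homotopy argument. Let $\sigma,\sigma'\colon L\monic V\smallsetminus S$ be two sections of $\pi$. Set $\sh{F}=\sh{E}|_{V\smallsetminus S}$.

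First, I reduce to the product structure. Under the stratum preserving homeomorphism
\[
V\smallsetminus S\cong \R^{\dim(S)}\times(0,1)\times L,
\]
the strata of $V\smallsetminus S$ are exactly the products $\R^{\dim(S)}\times(0,1)\times L_{S<T}$. Now $\R^{\dim(S)}\times(0,1)$ is contractible. Hence any complex that is constructible with respect to this product stratification is isomorphic to $\pi^*\sh{G}$, with $\sh{G}\cong\sigma_0^*\sh{F}$ for a fixed section $\sigma_0$. Concretely, for $q\colon \R^{m}\times L\to L$ the projection, with $m=\dim(S)+1$, the unit $q^*q_*\sh{F}\to\sh{F}$ is an isomorphism. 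This is the standard homotopy invariance for complexes constructible along the fibres of a projection with contractible (Euclidean) fibre. It can be checked on stalks of cohomology sheaves, using that the cohomology sheaves are locally constant on each $\R^m\times L_{S<T}$, and hence constant along the contractible fibres $\R^m\times\{y\}$. I expect this to be the main technical point: one has to argue that $q_*$ commutes with taking stalks here. The route I would try first is to use the local cone structure to compute $(q_*\sh{F})_y$ on a basis of neighbourhoods $\R^m\times U$. Then $H^*(\R^m\times U;\sh{F})\cong H^*(U;\sh{F}|_{\{0\}\times U})$ by the Vietoris–Begle theorem for the projection with contractible fibres, applied to the truncations of $\sh{F}$.

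Given this, $\sh{F}\cong q^*\sh{G}$ with $\sh{G}=q_*\sh{F}$. Any section $\sigma$ of $\pi$, read in these coordinates, has the form $\sigma(y)=(s(y),y)$ for some continuous $s\colon L\to\R^m$, so $q\circ\sigma=\id_L$. Therefore
\[
\sigma^*\sh{F}\cong\sigma^*q^*\sh{G}\cong(q\circ\sigma)^*\sh{G}\cong \sh{G}.
\]
The same holds for $\sigma'$, so $\sigma^*\sh{F}\cong\sigma'^*\sh{F}$, and the isomorphism class does not depend on the section.

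It is worth checking that the phrase ``section of the projection'' matches this. We have $\pi=q$ under the identification, so a section means exactly a map of the form $y\mapsto(s(y),y)$. The argument is therefore complete once the homotopy invariance step above is established.

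A different choice of distinguished neighbourhood or of $x\in S$ only changes the trivialisation. In that case $\sh{G}$ is identified along a stratum preserving homeomorphism of links, so this dependence is not part of the claim.
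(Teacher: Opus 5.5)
Your argument is correct and is essentially the paper's. The paper also shows that the counit $\pi^*\pi_*(\sh{E}|_{V\smallsetminus S})\to \sh{E}|_{V\smallsetminus S}$ is an isomorphism, because the cohomology is constant along the contractible fibres of $\pi$; it cites Goresky--MacPherson for the homotopy-invariance step that you sketch via a Vietoris--Begle type argument. It then uses $\pi\sigma=\id$ to get $\sigma^*(\sh{E}|_{V\smallsetminus S})\cong\pi_*(\sh{E}|_{V\smallsetminus S})$, which does not depend on $\sigma$.

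Two small slips. First, $q^*q_*\sh{F}\to\sh{F}$ is the counit, not the unit, of $q^*\dashv q_*$. Second, your closing aside is not right in this generality: links in a topologically stratified space need not be unique up to homeomorphism, which is why the paper's next lemma only identifies the hypercohomology $H^k(L;\sh{E}|_{L})$.
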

\begin{proof}
Since the restriction $\sh{E}|_{V\smallsetminus S}$ is cohomologically constant on the fibres of $\pi$, which are contractible, the counit $\pi^*\pi_*(\sh{E}|_{V\smallsetminus S})\to \sh{E}|_{V\smallsetminus S}$ of the adjunction is an isomorphism in $\constr{c}{V\smallsetminus S}$ by \cite[\S1.13.(17)]{gm2}. Thus 
\[
\sh{E}|_{L} \coloneqq \sigma^*(\sh{E}|_{V\smallsetminus S})
 \cong   \sigma^*\pi^*\pi_*(\sh{E}|_{V\smallsetminus S}) = \pi_*(\sh{E}|_{V\smallsetminus S})
 \]
in $\constr{c}{L}$ because $\pi\sigma=\id$. The right hand expression is independent of the choice of $\sigma$. 
\end{proof}

The next result shows that the groups $H^k(L;\sh{E}|_{L})$ are independent of the choice of $L$ up to isomorphism. The Whitney stratified case can be found in  \cite[Lemma 9.3]{goresky}. 

\begin{lemma}
\label{cohomology links}
The hypercohomology $H^k(L;\sh{E}|_{L}) \cong \sh{H}^k_{x}(\imath^*\jmath_*\jmath^*\sh{E})$ for all $k\in\Z$.
\end{lemma}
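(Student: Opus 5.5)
The plan is to compute the stalk at $x\in S$ of $\imath^*\jmath_*\jmath^*\sh{E}$ as a direct limit over neighbourhoods, and then to use the conical structure to replace these neighbourhoods by the single deleted distinguished neighbourhood $V\smallsetminus S$, whose cohomology is that of $L$.

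First, since stalks of the pullback $\imath^*$ are stalks, we have
\[
\sh{H}^k_x(\imath^*\jmath_*\jmath^*\sh{E}) \cong \varinjlim_{U\ni x} H^k(U;\jmath_*\jmath^*\sh{E}) \cong \varinjlim_{U\ni x} H^k(U\smallsetminus S;\sh{E}).
\]
Here $U$ runs over open neighbourhoods of $x$ in $X$. The second isomorphism holds because $R\Gamma(U;R\jmath_*\mathcal{F}) = R\Gamma(U\cap (X\smallsetminus S);\mathcal{F})$. The distinguished neighbourhoods $V_\epsilon \cong B_\epsilon\times C_\epsilon(L)$ form a cofinal system, where $B_\epsilon$ is a ball in $\R^{\dim S}$ and $C_\epsilon(L)$ is the cone of radius $\epsilon$.

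Next I would show that each restriction map $H^k(V\smallsetminus S;\sh{E})\to H^k(V_\epsilon\smallsetminus S;\sh{E})$ is an isomorphism, so the direct limit is constant and equal to $H^k(V\smallsetminus S;\sh{E})$. I would do this by writing $V_\epsilon\smallsetminus S \cong B_\epsilon\times(0,\epsilon)\times L$. In the proof of the previous lemma, the counit $\pi^*\pi_*(\sh{E}|_{V\smallsetminus S})\to\sh{E}|_{V\smallsetminus S}$ is an isomorphism, since $\sh{E}$ is cohomologically constant along the contractible fibres $\R^{\dim S}\times(0,1)$ of $\pi$. The same argument applies to the restriction of $\pi$ to $V_\epsilon\smallsetminus S$, whose fibres $B_\epsilon\times(0,\epsilon)$ are also contractible. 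Hence
\[
H^k(V_\epsilon\smallsetminus S;\sh{E}) \cong H^k(V_\epsilon\smallsetminus S;\pi^*\pi_*\sh{E}) \cong H^k(L;\pi_*\sh{E}).
\]
The last isomorphism is the Vietoris--Begle type statement that $R\pi_*\pi^*\cong \id$ for a map with contractible fibres which is a trivial bundle, following \cite[\S1.13]{gm2}. These identifications are compatible with restriction.

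Finally, $\pi_*(\sh{E}|_{V\smallsetminus S})\cong\sh{E}|_L$ by the previous lemma. This gives $H^k(L;\sh{E}|_L)\cong \sh{H}^k_x(\imath^*\jmath_*\jmath^*\sh{E})$.

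The main obstacle is the cofinality and constancy step. I must make sure that the isomorphisms for different $\epsilon$ commute with the restriction maps, so that the direct limit really is the single group. I would handle this by noting that $\pi_\epsilon=\pi|_{V_\epsilon\smallsetminus S}$, so the restriction map is induced by the identity on $L$.
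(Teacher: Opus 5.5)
Your proposal is correct and follows essentially the same route as the paper: identify $H^k(L;\sh{E}|_L)$ with $H^k(V\smallsetminus S;\sh{E})\cong H^k(V;\jmath_*\jmath^*\sh{E}|_V)$ using $\pi_*$ and the counit isomorphism, then pass to the stalk at $x$ using the cofinal system of distinguished neighbourhoods. Your additional check that the restriction maps $H^k(V\smallsetminus S;\sh{E})\to H^k(V_\epsilon\smallsetminus S;\sh{E})$ are isomorphisms (because $\pi_\epsilon=\pi|_{V_\epsilon\smallsetminus S}$) simply makes explicit the stabilisation of the direct limit, which the paper leaves implicit in its appeal to cofinality.
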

\begin{proof} 
We have $H^k(L;\sh{E}|_{L})\cong H^k(L;\pi_*(\sh{E}|_{V\smallsetminus S}))\cong H^k(V\smallsetminus S; \sh{E}|_{V\smallsetminus S}) \cong H^k(V;\jmath_*\jmath^*\sh{E}|_{V})$. Since $x$ has a cofinal sequence of neighbourhoods of the form $V$, it follows that
\begin{equation*}
 H^k(L;\sh{E}|_{L}) \cong \sh{H}^k_{x}(\imath^*\jmath_*\jmath^*\sh{E})
\end{equation*}
for all $k\in\Z$. 
\end{proof}

\begin{lemma}
\label{shifted loc sys}
\label{cor:link vanishing}
If $\sh{F}\in\perv{p}{X}$ then $\sh{F}|_{L} \in \perv{\pbar}{L}$. Hence
\[
H^k(L;\sh{F}|_{L})=0 
\]
if $k<p(X)$ or $k >p(X)+\codim(S)-1$.
\end{lemma}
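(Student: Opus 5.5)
The argument splits into two parts: first show that $\sh{F}|_{L}$ is $\pbar$-perverse, then deduce the vanishing from \cref{prop:cohocptspace} applied to the compact link $L$.

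\emph{Perversity of the restriction.} Restriction to the open set $V\smallsetminus S$ is t-exact for perverse t-structures, since the perverse conditions are stalkwise conditions on $\imath_T^*$ and $\imath_T^!$ for each stratum $T$. So $\sh{F}|_{V\smallsetminus S}$ is $p$-perverse for the induced stratification.

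Next I use the stratified homeomorphism $V\smallsetminus S\cong \R^{\dim(S)}\times(0,1)\times L$. Under it, each stratum $T\cap(V\smallsetminus S)$ corresponds to $\R^{\dim(S)+1}\times L_{S<T}$. I then compare $\pi^*$ with the two perverse t-structures.
\begin{itemize}
\item For the stalk condition: $\imath_{T}^*\pi^*\cong \pi^*\imath_{L_{S<T}}^*$, and $\pi^*$ is exact on ordinary cohomology sheaves. Hence $\pi^*\sh{G}$ satisfies the $p$-aisle condition exactly when $\sh{G}$ satisfies the $\pbar$-aisle condition, because $\pbar(L_{S<T})=p(T)$.
\item For the costalk condition: $\pi$ is a topological submersion with fibre $\R^{\dim(S)+1}$, so $\pi^!\cong\pi^*[\dim(S)+1]$, and this is compatible with restriction to strata. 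By Verdier duality (the dual perversities match up to the same shift, as computed in the preceding lemma), $\imath_T^!\pi^*\sh{G}\cong \pi^*\imath^!_{L_{S<T}}\sh{G}$. This is the costalk condition in matching degrees.
\end{itemize}
Together these show that $\pi^*$ reflects perversity, up to the perversity $\pbar$. Since $\sh{F}|_{V\smallsetminus S}\cong\pi^*\pi_*(\sh{F}|_{V\smallsetminus S})\cong \pi^*(\sh{F}|_L)$ by the proof of the previous lemma, it follows that $\sh{F}|_L\in\perv{\pbar}{L}$.

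The main obstacle is verifying the $!$-restriction compatibility $\imath_T^!\pi^*\cong\pi^*\imath^!_{L_{S<T}}$ in this purely topological setting. I would handle it through the product structure: $\R^{\dim(S)+1}\times L$ with $\pi$ the projection, together with base change for the Cartesian square formed by the stratum inclusions, which holds since the projection is smooth. Alternatively, one can use that $\sigma$ is normally nonsingular of codimension $\dim(S)+1$, so $\sigma^!\cong\sigma^*[-\dim(S)-1]$ on cohomologically locally constant-along-fibres objects.

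\emph{The vanishing.} $L$ is a compact topologically stratified space, and by the preceding lemma both $\pbar$ and $\pbar^*$ are decreasing in dimension. So \cref{prop:cohocptspace} gives $H^k(L;\sh{F}|_L)=0$ for $k<\pbar(L)$ and for $k>-\pbar^*(L)$. Here $\pbar(L)$ and $\pbar^*(L)$ mean the values on the open (top) stratum of $L$, as in the proof of that proposition. That stratum is $L_{S<X^{\circ}}$ for the top-dimensional strata $T$ of dimension $\dim X$, so $\pbar(L)=p(X)$.

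Also, by the dual computation in the lemma, $\pbar^*(L)=p^*(X)+\dim(S)+1 = -p(X)-\dim(X)+\dim(S)+1$. Hence $-\pbar^*(L)=p(X)+\codim(S)-1$. This gives exactly the stated bounds.

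If $L$ has several top strata, or if $L$ is not pure-dimensional, I would rely on the minimum over strata used in \cref{prop:cohocptspace}. Monotonicity of $p$ and $p^*$ in dimension ensures the extreme values are attained at the top dimension.
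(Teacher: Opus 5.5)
Your proposal is correct. It transfers perversity along the projection $\pi$, whereas the paper uses the section $\sigma$.

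\textbf{The paper's route.} The paper works directly with the definition $\sh{F}|_L=\sigma^*(\sh{F}|_{V\smallsetminus S})$. For $t\colon T\hookrightarrow X$ and $t_L\colon L_{S<T}\hookrightarrow L$ it computes $t_L^*\sigma^*\cong\sigma^*t^*$. Since $\sigma$ is normally nonsingular, it also gets $t_L^!\sigma^*\cong t_L^!\sigma^![\dim(S)+1]\cong\sigma^!t^![\dim(S)+1]\cong\sigma^*t^!$. So the stalk and costalk bounds on $T\cap V$ pass straight to $L_{S<T}$, and only one direction is needed.

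\textbf{Your route.} You show that $\pi^*$ both preserves and reflects perversity, from $\pbar$ on $L$ to $p$ on $V\smallsetminus S$. You then use $\sh{F}|_{V\smallsetminus S}\cong\pi^*(\sh{F}|_L)$ from the preceding lemma.

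\textbf{Trade-off.} The paper's step $\sigma^*\cong\sigma^![\dim(S)+1]$ only holds for objects that are locally constant along the normal directions. That is true here by the local product structure, but it is a silently used hypothesis. Your isomorphisms hold for every object. Write $\pi_T$ for the restriction $T\cap(V\smallsetminus S)\to L_{S<T}$. The costalk isomorphism $\imath_T^!\pi^*\cong\pi_T^*\imath^!_{L_{S<T}}$ is just $\pi^!\cong\pi^*[\dim(S)+1]$ and $\pi_T^!\cong\pi_T^*[\dim(S)+1]$ (projections with fibre $\R^{\dim(S)+1}$), combined with $(\pi\circ\imath_T)^!\cong(\imath_{L_{S<T}}\circ\pi_T)^!$. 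Your route costs a dependence on the counit isomorphism of the previous lemma, and on surjectivity of $\pi$ for the reflection step. In return it gives the sharper statement that $\sh{G}\in\perv{\pbar}{L}$ if and only if $\pi^*\sh{G}$ is $p$-perverse.

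\textbf{Two small corrections.}
\begin{enumerate}
\item The appeal to Verdier duality and ``dual perversities matching'' in your costalk bullet plays no role. The isomorphism is formal, exactly as in your ``main obstacle'' paragraph.
\item In the vanishing step, do not claim the extreme values are attained at the top dimension. That fails if no top-dimensional stratum of $X$ is adjacent to $S$. Instead, observe that the proof of \cref{prop:cohocptspace} only uses lower bounds for the perversity and its dual over all strata. Monotonicity gives $\pbar\geq p(X)$ and $\pbar^*\geq p^*(X)+\dim(S)+1$ on every stratum of $L$. This yields exactly the stated range, possibly weaker than optimal, which is all that is claimed.
\end{enumerate}
Otherwise your vanishing argument coincides with the paper's.
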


\begin{proof}
Let $\sh{F}\in \perv{p}{X}$. For a stratum $t\colon T\monic X$ with $S\subset \overline{T}$ let   $t_L\colon L_{S<T} \monic L$ be the inclusion. Then 
\[
t_L^*(\sh{F}|_{L}) = t_L^*\sigma^*(\sh{F}|_{V\smallsetminus S})\cong\sigma^*t^*(\sh{F}|_{V\smallsetminus S}).
\]
Therefore $t_L^*(\sh{F}|_{L})\in\cat{D}^{\leq-p(T)}(L_{S<T})$ because  $t^*(\sh{F}|_{V\smallsetminus S})\in\cat{D}^{\leq-p(T)}(T\cap V)$ and $\sigma$ is normally nonsingular. Using the normal nonsingularity of $\sigma$ again we have
\begin{align*}
t_L^!(\sh{F}|_{L}) 
& = t_L^!\sigma^*(\sh{F}|_{V\smallsetminus S}) \cong t_L^!\sigma^!(\sh{F}|_{V\smallsetminus S})[\dim(S)+1]\\ 
&\cong\sigma^! t^!(\sh{F}|_{V\smallsetminus S})[\dim(S)+1]\cong\sigma^* t^!(\sh{F}|_{V\smallsetminus S}).
\end{align*}
Since $t^!(\sh{F}|_{V\smallsetminus S})\in \cat{D}^{\geq-p(T)}(T\cap V)$ it follows that $t_L^!(\sh{F}|_{L})\in\cat{D}^{\geq-p(T)}(L_{S<T})$. We conclude that $\sh{F}|_{L}\in\perv{\pbar}{L}$ as claimed.

The vanishing results follow from \cref{prop:cohocptspace} and the fact that  $\pbar(L)=\dim(X)$ and $-\pbar^*(L)= \pbar(L)+\dim(L)=p(X)+\codim(S)-1$.
\end{proof}

We now bound the global dimension of a faithful heart by the dimension of $X$, see also  \cite[\S 5.1]{MR4349392}. In the next section we give conditions under which the bound is achieved. 

\begin{theorem}
\label{on gl dim}
Let $X$ be a topologically stratified space with finitely many strata and $p$ a GM-perversity. For any $\sh{E},\sh{F}\in\perv{p}{X}$ we have
\begin{equation*}
\Ext{\constr{c}{X}}{k}{\sh{E}}{\sh{F}}=0\ \ \mbox{if}\ k<0 \ \text{or}\  k>\dim(X).  
\end{equation*}
In particular, if $\perv{p}{X}$ is faithful then $\gdim{\perv{p}{X}}\leq\dim(X)$.
\end{theorem}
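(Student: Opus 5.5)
Vanishing for $k<0$ is immediate because $\perv{p}{X}$ is the heart of a t-structure. For the upper bound I will induct on the number of strata (equivalently, on depth), using the recollement attached to a closed stratum. Let $\imath\colon S\monic X$ be a closed stratum of minimal dimension, $\jmath\colon U=X\smallsetminus S\monic X$ the open complement, and fix $\sh{E},\sh{F}\in\perv{p}{X}$. Apply $\Mor{}{\sh{E}}{-}$ to the triangle $\imath_*\imath^!\sh{F}\to\sh{F}\to\jmath_*\jmath^*\sh{F}\to$. This gives
\[
\Ext{}{k}{\jmath^*\sh{E}}{\jmath^*\sh{F}}\quad\text{and}\quad \Ext{}{k}{\imath^*\sh{E}}{\imath^!\sh{F}}
\]
as the outer terms of a long exact sequence. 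The first is $\Ext{\constr{c}{U}}{k}{}{}$ between perverse sheaves on $U$, which has fewer strata and $\dim U\le\dim X$, so it vanishes for $k>\dim X$ by induction. (The base case is a single stratum manifold $S$ with local systems in degree $p(S)$, where $\Ext^k$ of local systems is the cohomology of $S$ with local coefficients; this vanishes for $k>\dim S$.) The induction needs no compactness, so $U$ is allowed.

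The work lies in the closed-stratum term. The adjunction gives $\Ext{}{k}{\imath^*\sh{E}}{\imath^!\sh{F}}=H^k(S;\mathcal{H}om(\imath^*\sh{E},\imath^!\sh{F}))$. I will bound the cohomological degrees of the local systems $\sh{H}^a(\imath^*\sh{E})$ and $\sh{H}^b(\imath^!\sh{F})$ using perversity together with the link computation.

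The first bound is $\imath^*\sh{E}\in\cat{D}^{\le p(S)}$ by definition. For a lower bound I use the triangle $\imath^!\sh{E}\to\imath^*\sh{E}\to\imath^*\jmath_*\jmath^*\sh{E}\to$, together with $\imath^!\sh{E}\in\cat{D}^{\ge p(S)}$ and \cref{cohomology links} combined with \cref{cor:link vanishing}. These show that the stalks of $\imath^*\jmath_*\jmath^*\sh{E}$ are $H^*(L;\sh{E}|_L)$, concentrated in degrees $[p(X),p(X)+\codim S-1]$. Hence $\imath^*\sh{E}$ lives in degrees $[\min(p(S),p(X)),p(S)]$, and since $p(X)\le p(S)$ this is $[p(X),p(S)]$. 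Similarly, $\imath^!\sh{F}$ lives in degrees $[p(S),\max(p(S),p(X)+\codim S)]$, and here $p(X)+\codim S\ge p(S)$ by the GM inequality.

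The local Hom complex is therefore concentrated in degrees at most $p(X)+\codim S-p(X)=\codim S$. Cohomology of $S$ with coefficients in a complex of local systems bounded above by $\codim S$ vanishes beyond $\codim S+\dim S=\dim X$. The five-lemma/long exact sequence argument then gives $\Ext^k(\sh{E},\sh{F})=0$ for $k>\dim X$.

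For the final assertion, faithfulness identifies $\Ext{\perv{p}{X}}{k}{}{}$ with $\Ext{\constr{c}{X}}{k}{}{}$, so the global dimension is at most $\dim X$.

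I expect the main obstacle to be the degree bookkeeping in the triangle bounds, especially the $\imath^!$ bound. That bound comes from the cone shift: the triangle $\imath^!\sh{F}\to\imath^*\sh{F}\to\imath^*\jmath_*\jmath^*\sh{F}$ gives $\imath^!\sh{F}$ degrees up to one more than the top link degree, namely $p(X)+\codim S$. I also need care that the cohomological dimension of a noncompact manifold stratum with local coefficients is at most $\dim S$, which is standard.
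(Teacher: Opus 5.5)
Your proposal is correct and follows the paper's proof essentially step for step. Both arguments induct on the number of strata using the triangle $\imath_*\imath^!\sh{F}\to\sh{F}\to\jmath_*\jmath^*\sh{F}\to\imath_*\imath^!\sh{F}[1]$, and both control the closed-stratum term $\Ext{\constr{c}{S}}{k}{\imath^*\sh{E}}{\imath^!\sh{F}}$ by bounding the cohomological degrees of $\imath^*\sh{E}$ and $\imath^!\sh{F}$ via \cref{cohomology links} and \cref{cor:link vanishing}. Your bookkeeping is, if anything, slightly more careful than the paper's: you track $p(S)$, whereas the paper's intermediate claim $\sh{H}^k(\imath^!\sh{F})=0$ for $k<0$ should read $k<p(S)$ (the resulting range $[0,\codim S]$ for the local Hom complex is the same), and you note that the single-stratum base case needs only the cohomological dimension of a manifold, not the compactness hypothesis of \cref{prop:cohocptspace}.
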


\begin{proof} 
We use induction on the number of strata.  When $X$ has a single stratum, perverse sheaves  are shifted local systems and  $\Ext{\constr{c}{X}}{k}{\sh{E}}{\sh{F}}\cong\Ext{\constr{c}{X}}{k}{\kk_X}{\sh{E}^{\vee}\otimes\sh{F}}=0$ for $k<0$ or $k>\dim(X)$ by \cref{prop:cohocptspace}.

For the inductive step let $\imath \colon S \monic X$ be the inclusion of a closed stratum and $\jmath \colon  X\smallsetminus S \monic X$ the complementary open inclusion. Apply $\Ext{\constr{c}{X}}{k}{\sh{E}}{-}$ to the triangle
\begin{equation}\label{equation:triangle1}
\imath_*\imath^!\sh{F}\to\sh{F}\to\jmath_*\jmath^*\sh{F}\to\imath_*\imath^!\sh{F}[1]\end{equation}
to obtain the long exact sequence
\begin{equation}
\label{LES ext gdim}
\cdots \to \Ext{\constr{c}{X}}{k}{\sh{E}}{\imath_*\imath^!\sh{F}}\to \Ext{\constr{c}{X}}{k}{\sh{E}}{\sh{F}}\to \Ext{\constr{c}{X}}{k}{\sh{E}}{\jmath_*\jmath^*\sh{F}}\to\cdots
\end{equation}
By adjunction, the right hand term in (\ref{LES ext gdim}) is $\Ext{\constr{c}{U}}{k}{\jmath^*\sh{E}}{\jmath^*\sh{F}}$ which vanishes for $k<0$ and $k>\dim X$ by induction. To complete the inductive step we show that the left hand term, which by adjunction is $\Ext{\constr{c}{S}}{k}{\imath^*\sh{E}}{\imath^!\sh{F}}$,  vanishes in the same degrees. 

Applying $\imath^*$ to (\ref{equation:triangle1}) and taking cohomology yields the long exact sequence in $\loc{S}$
\begin{equation*}
\cdots \to \sh{H}^{k}(\imath^!\sh{F})\to \sh{H}^{k}(\imath^*\sh{F})\to \sh{H}^{k}(\imath^*\jmath_*\jmath^*\sh{F})\to \sh{H}^{k+1}(\imath^!\sh{F})\to \cdots.
\end{equation*}
 By \cref{cohomology links} and  \cref{shifted loc sys},  $\sh{H}^{k}(\imath^*\jmath_*\jmath^*\sh{F})=0$  for $k<p(X)$ and $k>p(X)+\codim(S)-1$.  Combined with the fact that $\imath^!$ is left t-exact we deduce that $\sh{H}^k(\imath^*\sh{F})=0$ for $k<p(X)$ or $k>0$, and similarly using the fact that $\imath^*$ is right t-exact we have $\sh{H}^k(\imath^!\sh{F})=0$ for $k<0$ or $k>p(X)+\codim(S)$. This applies to any perverse sheaf, in particular to $\sh{E}$ in place of $\sh{F}$.  It follows immediately that 
 \[
 \Ext{\constr{c}{S}}{k}{\imath^*\sh{E}}{\imath^!\sh{F}} = 0
 \]
  for $k<0$. By applying the vanishing result inductively over the cohomological filtrations of $\imath^!\sh{E}$ and $\imath^*\sh{F}$ we also conclude that 
 \[
 \Ext{\constr{c}{S}}{k}{\imath^*\sh{E}}{\imath^!\sh{F}} = 0
 \]
 for $k>\dim{S}+\codim{S}=\dim(X)$ as required. 
\end{proof}
Theorem~\ref{on gl dim} was known for a complex analytic space with a complex analytic Whitney stratification with $\pi_1(S)=\pi_2(S)=0$ for all but the open strata, see \cite[Theorem 1.3]{MR925719}. A bound for the global dimension of the category of middle perverse sheaves on a complex algebraic variety is also obtained in \cite[Corollary 3.2.2]{MR1322847} as a consequence of the fact that the middle perverse sheaves are highest weight.  

We provide some illustrative examples. The first shows the bound on the global dimension can be strict. The second is an example of a perverse heart which has infinite global dimension, and therefore is not faithful. The third shows that the converse of \cref{on gl dim} is false.

\begin{example}
Let $X=\R^n$ trivially stratified. Then for any perversity $\perv{p}{X} \simeq \vect{\kk}$ is faithful of global dimension $0$ whereas $\dim X=n$. 
\end{example}
\begin{example}
\label{ex:infinite gldim}
Consider $X=\C\PP^2$ stratified by $\C\PP^1$ and its complement $\C^2$ and let $m$ be the middle perversity. By \cite[Example 6.3]{MR833195}, the category of middle perverse sheaves on $X$ is equivalent to representations of the quiver 
\begin{center}
\begin{tikzpicture}
\node (A) at (-1,0) {$1$};
\node (B) at (1,0) {$2$};
\path[->,font=\scriptsize,>=angle 90]
(B) edge [bend left] node[below] {b} (A);
\path[->,font=\scriptsize,>=angle 90]
(A) edge [bend left] node[above] {a} (B);
\end{tikzpicture}
\end{center}
\noindent with ideal of relations $I=\langle ab, ba\rangle$. Since the global dimension of $\perv{m}{X}$ is infinite, this heart cannot be faithful by Theorem~\ref{on gl dim}.
\end{example}
\begin{example}\label{constr_sheaves_P1}
    Let $X=\C\PP^1$ stratified by a point and its complement. The category $\perv{z}{X}$ of zero-perverse or constructible sheaves is equivalent to representations of the $A_2$ quiver by \cite{MR2575092}. Thus, the inequality of Theorem~\ref{on gl dim} becomes
    \[
    \gdim{\perv{z}{\C\PP^1}}=1 < 2=\dim(\C\PP^1).
    \]
    However $\perv{z}{X}\subset \constr{c}{X}$ is not faithful  since $\Ext{\constr{c}{X}}{2}{\kk_X}{\kk_X}\cong H^2(X)\cong\kk$.
\end{example}

When a perverse heart is faithful highest weight there is an alternative bound on the global dimension coming from the depth of the stratification.

\begin{corollary}
\label{cor: gldim depth bound}
    Let $X$ be a topologically stratified space with finitely many strata, each $1$-connected. Then   $\gdim{\perv{p}{X}}\leq 2 \depth{X}$ when $\perv{p}{X}$ is faithful highest weight. 
\end{corollary}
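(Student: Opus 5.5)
Since the heart is faithful highest weight, I would reduce the bound to computing Ext groups in $\constr{c}{X}$ between standard and costandard objects. By \cref{thm:fhw} these form a dual pair of full exceptional collections in $\perv{p}{X}$. For an abelian category with finitely many simples the global dimension is detected on pairs of simples, and in a highest weight category every object has finite $\Delta$- and $\nabla$-flags. This suggests bounding the projective dimension of standard objects and the injective dimension of costandard objects.

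\textbf{Step 1.} Show that $\Ext{\perv{p}{X}}{k}{\stan{}{S}}{\ob{F}}=0$ for $k>\depth{X}$ and all perverse $\ob{F}$. Here $\ob{F}$ is built by extensions from $\costan{}{T}$'s, shifted by degree via standard filtrations of injective coresolutions. Concretely, I would prove by induction on the length $\ell(S)$ of a maximal chain of strata below $S$ that the projective dimension of $\stan{}{S}$ is at most $\depth{X}-\ell'(S)$. Here $\ell'(S)$ is the length of the longest chain of strata above $S$; equivalently, I would use the standard argument that $\mathrm{pd}\,\stan{}{S}$ is bounded by the length of chains above $S$.

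The mechanism is as follows. The kernel of $\ob{P}_S\to\stan{}{S}$ is filtered by $\stan{}{T}$ with $T>S$ in the order. In the geometric situation only strata $T$ with $S<T$ in the partial order occur, since the order refines the poset and any total refinement works by \cref{thm:fhwp}. So $\mathrm{pd}\,\stan{}{S}\le 1+\max_{T>S}\mathrm{pd}\,\stan{}{T}$, and the open (maximal) strata have projective $\stan{}{T}$. This gives $\mathrm{pd}\,\stan{}{S}\le\depth{X}$. Dually, $\mathrm{id}\,\costan{}{S}\le\depth{X}$.

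\textbf{Step 2.} For simples $\ob{S}_S,\ob{S}_T$ I would use the $\Delta$/$\nabla$ formalism. Every simple $\ob{S}_S$ has a resolution by objects with $\Delta$-flags whose standards $\stan{}{U}$ run over $U\le S$, of length at most the chain length below $S$. Dimension shifting then gives $\mathrm{pd}\,\ob{S}_S\le \depth{X}+\depth{X}$: at most $\depth{X}$ steps down to $\Delta$-filtered objects, plus the projective dimension of standards. Hence $\Ext{}{k}{\ob{S}_S}{\ob{S}_T}=0$ for $k>2\depth{X}$, so $\gdim{\perv{p}{X}}\le 2\depth{X}$.

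\textbf{Main obstacle.} The hardest step is justifying that only strata comparable in the partial order appear in these filtrations, so that chain lengths in the poset, rather than the number of strata (which would only give the bound $2\rk{K}-2$), control the dimensions. I would argue this via the geometric description of the standard objects as ${\imath_S}_!\kk_S[-p(S)]$ from the proof of \cref{thm:fhwp}. Since $\stan{}{S}$ is supported on $\overline{S}$, its composition factors $\ob{S}_U$ satisfy $U\le S$. Using Ext-vanishing between standards supported on incomparable closures, together with faithfulness, one gets $\Ext{}{*}{\stan{}{S}}{\costan{}{T}}=0$ for $S\neq T$. This lets the Ext-spectral-sequence steps be indexed by poset chains.
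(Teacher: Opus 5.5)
Your argument is correct in outline and follows the same route as the paper. The paper notes that $\perv{p}{X}$ is highest weight for every total order refining the poset of strata (via \cref{thm:fhwp}) and then cites \cite[Lemma~2.2]{MR1211481} for the bound by twice the height of the poset. Your Steps 1 and 2 are the standard proof of that lemma.

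Several statements need repair, however. First, it is false that every object of a highest weight category has $\Delta$- and $\nabla$-flags; simple objects usually do not. You only need this for projectives. Second, the inequality $\mathrm{pd}\,\Delta_S\le\depth{X}-\ell'(S)$ is wrong. Take $[0,1]$ stratified by its endpoints and interior, with the zero perversity (faithful by \cref{thm:polischuk}). A vertex has $\ell'=\depth{X}=1$, so the bound would force its standard object to be projective, but that object is a skyscraper, which is not projective. What your mechanism paragraph actually proves, by downward induction from the maximal strata, is $\mathrm{pd}\,\Delta_S\le\ell'(S)$. Third, in Step 2 you assert without proof a $\Delta$-filtered resolution of $\ob{S}_S$ of length at most $\ell(S)$, and you do not need it. The sequence $0\to\ob{K}\to\Delta_S\to\ob{S}_S\to 0$, where $\ob{K}$ has composition factors $\ob{S}_U$ with $U<S$, gives $\mathrm{pd}\,\ob{S}_S\le\max(\mathrm{pd}\,\Delta_S,\,1+\max_{U<S}\mathrm{pd}\,\ob{S}_U)$. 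Induction on $\ell(S)$ then yields $\mathrm{pd}\,\ob{S}_S\le\depth{X}+\ell(S)\le 2\depth{X}$.

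The reasoning in your last paragraph does not establish what you need. The vanishing $\Ext{}{*}{\Delta_S}{\nabla_T}=0$ for $S\neq T$ holds in every highest weight category, so it cannot distinguish comparable from incomparable strata. What forces only strata $T$ with $S<T$ in the partial order to appear in the $\Delta$-flag of $\ob{P}_S$ is reciprocity, $(\ob{P}_S:\Delta_T)=\dim\Mor{}{\ob{P}_S}{\nabla_T}=[\nabla_T:\ob{S}_S]$. Combine this with the fact that $\nabla_T={\imath_T}_*\kk_T[-p(T)]$ is supported on $\overline{T}$. Equivalently, these multiplicities do not depend on the chosen refinement, and a stratum incomparable to $S$ lies below $S$ in some refinement, so it cannot occur. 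This is exactly what is needed to pass from the paper's ``highest weight for any total order refining the partial order'' to a bound in terms of the height of the poset.
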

\begin{proof}
Since $\perv{p}{X}$ is highest weight for any total order refining the partial order of strata, $\gdim{\perv{p}{X}} \leq 2 \depth{X}$ by \cite[Lemma~2.2]{MR1211481}. 
\end{proof}

\subsection{Lower bounds on  global dimension}
\label{satisfaction_gd_bounds}

When a shift of the constant sheaf is perverse we obtain a lower bound on the global dimension of a faithful perverse heart in terms of the cohomology of the space.

First we recall conditions under which a shift of the constant sheaf is perverse. This is immediate when $p$ is the zero perversity, but occurs in  other situations too.
\begin{lemma}
\label{lem:shifted constant perverse}
The shifted constant sheaf $\kk_X[-p(X)]$ is perverse if $X$ is
\begin{enumerate}
    \item a topological manifold and $p$ is any GM-perversity or
    \item a complex analytic local complete intersection and $p$ is the middle perversity.
\end{enumerate}
In the first case $\kk_X[-p(X)]$ is simple if $p(S)>p(X)$ for $\dim S < \dim X$, and in the second case it is always simple. 
\end{lemma}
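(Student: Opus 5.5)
The plan is to check the defining conditions of $\pdl{p}{0}$ and $\pdg{p}{0}$ directly, stratum by stratum, using the local conical structure. Set $\sh{E}=\kk_X[-p(X)]$. For the aisle condition, $\imath_S^*\sh{E}=\kk_S[-p(X)]$ has cohomology only in degree $p(X)$. Since $p$ is decreasing in dimension, $p(X)\leq p(S)$ for every stratum $S$, so $\sh{E}\in\pdl{p}{0}$ in both cases. The real content is therefore the costalk condition: we need $\sh{H}^n(\imath_S^!\sh{E})=0$ for $n<p(S)$.

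In case (1), $X$ is a topological manifold of dimension $d$. At a point $x\in S$ the costalk is computed by local cohomology, $\sh{H}^n_x(\imath_S^!\kk_X)\cong H^{n}_c(\R^{\dim S};\kk)\otimes H^{\ast}(C(L),L)$ up to the usual shift. Equivalently, since $\imath_S\colon S\to X$ is an inclusion of a submanifold stratum in a manifold, it is enough to know that $H^n(C(L),L)\cong \tilde H^{n-1}(L)$ with $L$ a homology sphere of dimension $\codim(S)-1$. Then $\imath_S^!\kk_X$ is a local system concentrated in degree $\codim(S)$. So $\sh{H}^n(\imath_S^!\sh{E})\neq 0$ only for $n=\codim(S)+p(X)$. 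The GM inequality $p(S)-p(X)\leq \dim X-\dim S$ gives $p(S)\leq \codim(S)+p(X)$, which is exactly the required vanishing. To justify that links in a topological manifold are homology spheres, I would use the homotopy equivalence $C(L)\smallsetminus\{v\}\simeq L$ together with the computation of $H^*(\R^{\dim S}\times C(L),\R^{\dim S}\times C(L)\smallsetminus \R^{\dim S}\times\{v\})$ as local homology of a manifold along a submanifold.

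For simplicity in case (1), the simple perverse sheaves are intermediate extensions, so $\sh{E}$ is simple exactly when it equals $\jmath_{!*}$ of its restriction to the open stratum. That is, it should have no nonzero sub- or quotient objects supported on smaller strata. By the Deligne characterisation of $\jmath_{!*}$ this is the strict inequalities $\sh{H}^n(\imath_S^*\sh{E})=0$ for $n\geq p(S)$ and $\sh{H}^n(\imath_S^!\sh{E})=0$ for $n\leq p(S)$ on every smaller stratum $S$. The first holds iff $p(X)<p(S)$, which is the hypothesis. The second follows because $p(S)\leq \codim(S)+p(X)$ and $\codim(S)\geq 1$. One degenerate case needs care: the strict inequality $p(S)<\codim(S)+p(X)$ could fail when $p(S)=p(X)+\codim(S)$. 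There I would invoke the top-perversity-type hypothesis being excluded, or argue it via $\codim S$. I expect this to be the fiddly bookkeeping point.

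Case (2) is classical, and I would not reprove it. For a local complete intersection of complex dimension $n$, $\C_X[n]$ is perverse for the middle perversity (Lê's theorem on the homotopy type of the Milnor fibre/link, as in \cite[Theorem 5.1.20]{dimca}). In our normalisation $m(X)=-\dim_\R X/2=-n$, which matches $\kk_X[-m(X)]$. The claim that it is always simple should be read as in the cited source. The main obstacle is really the manifold link computation in (1) in the purely topological setting, which I would base on the local cohomology of a manifold.
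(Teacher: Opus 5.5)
Your check that $\kk_X[-p(X)]$ is perverse is correct and is the ``direct calculation'' the paper has in mind for (1). Stalks sit in degree $p(X)\le p(S)$. Links in a manifold are $\kk$-homology spheres of dimension $\codim(S)-1$, so $\imath_S^!\kk_X$ is a rank-one local system in degree $\codim(S)$. The GM inequality $p(S)-p(X)\le\codim(S)$ is then exactly the costalk condition. For (2) you cite the same result as the paper. One small slip: the stalk of $\imath_S^!\kk_X$ at $x$ is $H^n(C(L),C(L)\smallsetminus v)\cong\tilde H^{n-1}(L)$, with no $H^*_c(\R^{\dim S})$ factor; that factor belongs to the point costalk $\imath_x^!$.

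The simplicity part has a genuine gap, and it is not bookkeeping. From $p(S)\le\codim(S)+p(X)$ and $\codim(S)\ge1$ you cannot get the strict inequality $p(S)<p(X)+\codim(S)$ that the $\jmath_{!*}$ criterion needs. The hypothesis $p(S)>p(X)$ does not exclude the equality case: the top perversity has $t(S)>t(X)$ for every lower-dimensional stratum, yet $t(S)=t(X)+\codim(S)$. Likewise, any codimension-one stratum satisfying the hypothesis is forced into equality.

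In the equality case $\imath_S^!\kk_X[-p(X)]$ is concentrated in degree exactly $p(S)$. So ${}^p\sh{H}^0$ of the $!$-restriction to the closed complement of the top stratum is nonzero, and its pushforward maps nonzero into $\kk_X[-p(X)]$. This gives a proper nonzero subobject, so $\kk_X[-p(X)]$ is not simple. Concretely, take $X=\R^2\supset\{0\}$ and $p=t$. Then $\imath^!\kk_{\R^2}[2]\cong\kk$, and there is a short exact sequence $0\to\kk_0\to\kk_{\R^2}[2]\to\jmath_*\kk_{\R^2\smallsetminus 0}[2]\to0$ in $\perv{t}{\R^2}$, Verdier dual to the quotient $\kk_{\R^2}\to\kk_0$ of constructible sheaves.

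So the simplicity claim in (1) cannot be proved as stated. What your argument does prove is simplicity under the extra hypothesis $p^*(S)>p^*(X)$, i.e.\ $0<p(S)-p(X)<\codim(S)$, for every lower-dimensional stratum, with $X$ connected. That hypothesis rules out codimension-one strata, so the top stratum is connected and its constant local system is simple, which your reduction to $\jmath_{!*}$ also silently uses. It holds, for instance, for the middle perversities when all smaller strata have codimension at least two.

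In (2), deferring simplicity to ``the cited source'' is not an argument, and none is available. The cited theorem gives perversity only, and simplicity fails in general. For the node $X=\{xy=0\}\subset\C^2$ (a hypersurface), or the irreducible nodal cubic, which is locally the same, the link at the origin is two disjoint circles. Hence $\sh{H}^0(\imath_0^!\C_X[1])\cong\tilde H^0(L)\cong\C\neq0$, and the skyscraper $\C_0$ is a proper subobject of $\C_X[1]$ in $\perv{m}{X}$. Simplicity in (2) needs an extra hypothesis, for example that $X$ is an irreducible $\kk$-homology manifold. Under that hypothesis links are homology spheres and your case (1) costalk argument applies, since complex strata have real codimension at least two.
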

\begin{proof}
The first case is a direct calculation and the second is \cite[Theorem 5.1.20]{dimca}.
\end{proof}

\begin{corollary}
\label{cor:equality_gldim}
Let $X$ be a topologically stratified space and $p$ a GM-perversity for which $\perv{p}{X}$ is faithful and $\kk_X[-p(X)]$ perverse. Then 
\[
\gdim{\perv{p}{X}} \geq \max \{ d \mid H^d(X;\kk) \neq 0\}.
\] 
\end{corollary}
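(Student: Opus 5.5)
The plan is to compare Ext groups of the perverse sheaf $\sh{E}\coloneqq\kk_X[-p(X)]$ with itself in the heart and in the ambient category. Faithfulness turns the cohomology of $X$ into Ext groups in the abelian category $\perv{p}{X}$. Those Ext groups must vanish above the global dimension, and this gives the bound.

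First compute the ambient Ext groups. By the adjunction $\pi^*\dashv\pi_*$ for $\pi\colon X\to\pt$, and because $\pi^*\kk=\kk_X$, we have
\[
\Ext{\constr{c}{X}}{d}{\kk_X[-p(X)]}{\kk_X[-p(X)]}\cong \Mor{\constr{c}{X}}{\kk_X}{\kk_X[d]}\cong H^d(X;\kk).
\]
The shifts cancel, so no compactness assumption is needed: this is just $\Mor{}{\pi^*\kk}{\kk_X[d]}\cong\Mor{}{\kk}{\pi_*\kk_X[d]}$.

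Next use faithfulness. The realisation functor $\der{b}{\perv{p}{X}}\to\constr{c}{X}$ is an equivalence, so by the criterion recalled in \cref{subsec:FH},
\[
\Ext{\perv{p}{X}}{d}{\sh{E}}{\sh{E}}\cong\Ext{\constr{c}{X}}{d}{\sh{E}}{\sh{E}}\cong H^d(X;\kk)
\]
for all $d$. Now let $d$ be any degree with $H^d(X;\kk)\neq 0$. Then $\Ext{\perv{p}{X}}{d}{\sh{E}}{\sh{E}}\neq0$. By the definition of global dimension as a supremum over non-vanishing Ext groups between objects of the heart, this gives $\gdim{\perv{p}{X}}\geq d$. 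Taking the maximum over all such $d$ proves the claim. If no such $d$ exists the right-hand side is conventionally small; for nonempty $X$ we have $H^0\neq0$, and the bound is trivially true.

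There is essentially no obstacle; the only care needed is in the identification of Ext with cohomology. In particular, the shift by $-p(X)$ must cancel on both sides, and the degrees must be matched in the Yoneda Ext of the heart. If the set of such $d$ is unbounded, the same argument shows the global dimension is infinite. That case is consistent with \cref{on gl dim} only when $X$ has finite cohomological dimension, which holds here.
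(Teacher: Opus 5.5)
Your proof is correct and is essentially the paper's argument: faithfulness identifies $\Ext{\perv{p}{X}}{d}{\kk_X[-p(X)]}{\kk_X[-p(X)]}$ with $\Ext{\constr{c}{X}}{d}{\kk_X}{\kk_X}\cong H^d(X;\kk)$, and any nonvanishing such group bounds the global dimension from below. Your closing remark about the unbounded case is unnecessary and a little muddled: under the standing finitely-many-strata assumption of that section, \cref{on gl dim} already bounds the global dimension of a faithful perverse heart by $\dim(X)$, so that case never arises.
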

\begin{proof}
Since the heart is faithful,
    \begin{align*}
        \Ext{\perv{p}{X}}{d}{\kk_X[-p(X)]}{\kk_X[-p(X)]}&\cong\Ext{\constr{c}{X}}{d}{\kk_X}{\kk_X}\cong H^d(X;\kk). \qedhere
    \end{align*}
  \end{proof}
  
  \begin{example}
  \label{ex:manifold gldim eq}
  If $X$ is a compact, $\kk$-oriented topologically stratified manifold and $p$ is a perversity for which $\perv{p}{X}$ is faithful then $\gdim{\perv{p}{X}}=\dim(X)$. This follows immediately from the above together with the fact that $H^{\dim(X)}(X;\kk)\neq 0$.
  \end{example}
  
\begin{corollary}
\label{cor:dim bded by depth}
Let $X$ be a topologically stratified compact $\kk$-orientable topological manifold with finitely many strata, each $1$-connected.
Then $\dim(X) \leq 2 \depth{X}$  when $\perv{p}{X}$ is   serially faithful.
\end{corollary}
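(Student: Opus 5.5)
The plan is to combine the lower bound of \cref{cor:equality_gldim}, in the form of \cref{ex:manifold gldim eq}, with the upper bound of \cref{cor: gldim depth bound}. The comparison goes through the global dimension of $\perv{p}{X}$, so the work is to check that both results apply under the stated hypotheses.

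\emph{Step 1: upgrade serial faithfulness to faithful highest weight.} Fix a total order on $\strata{X}$ refining the partial order. This gives the admissible thick filtration of $\constr{c}{X}$ by the subcategories $\constr{c}{X_T}$, and $\perv{p}{X}$ is glued along it by construction. The successive quotients are $\constr{c}{S}$, one for each stratum. I will read ``serially faithful'' as being with respect to this filtration. Serial faithfulness then gives that the heart $\loc{S}[-p(S)]$ of each quotient sits faithfully inside $\constr{c}{S}$.

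Each stratum is $1$-connected, so $\loc{S}\simeq\vect{\kk}$. Faithfulness of $\vect{\kk}$ inside $\constr{c}{S}$ forces $H^{>0}(S;\kk)=0$, which gives $\constr{c}{S}\simeq\der{b}{\kk}$. By \cref{prop:except coll}, the filtration then comes from a full exceptional collection, namely the standard objects ${\jmath}_!\kk[-p(S)]$. So $\perv{p}{X}$ is glued along a full exceptional collection and serially faithful, and \cref{thm:fhw} $(4)\Rightarrow(1)$ shows it is faithful highest weight. This is the same reasoning as in \cref{rmk:hw}. Here $\constr{c}{X}$ is algebraic, as noted earlier.

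\emph{Step 2: the two bounds.}
\begin{itemize}
\item \emph{Upper bound.} Since $\perv{p}{X}$ is faithful highest weight with $1$-connected strata, \cref{cor: gldim depth bound} gives $\gdim{\perv{p}{X}}\le 2\depth{X}$.
\item \emph{Lower bound.} $X$ is a topological manifold, so $\kk_X[-p(X)]$ is perverse by \cref{lem:shifted constant perverse}(1). Since $X$ is also compact and $\kk$-orientable, $H^{\dim X}(X;\kk)\neq 0$. Faithfulness and \cref{cor:equality_gldim} then give $\gdim{\perv{p}{X}}\ge\dim X$; this is exactly \cref{ex:manifold gldim eq}.
\end{itemize}
Combining the two bounds gives $\dim(X)\le 2\depth{X}$.

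\emph{Main obstacle.} The delicate point is Step 1: deducing that the quotient categories are $\der{b}{\kk}$. That requires serial faithfulness for the geometric filtration, and it requires identifying faithfulness of the one-stratum heart in $\constr{c}{S}$ with vanishing of higher cohomology. It also requires checking that $\perv{p}{X}$ is genuinely glued along the resulting exceptional collection, so that \cref{thm:fhw} applies. If serial faithfulness were intended for some other filtration, I would need the $1$-connectedness assumption to force the quotients to be of the form $\der{b}{\kk}$ anyway; I expect the geometric filtration is the intended reading.
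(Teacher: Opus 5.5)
Your proposal is correct and is essentially the paper's proof, which just combines \cref{ex:manifold gldim eq} (lower bound) with \cref{cor: gldim depth bound} (upper bound); your Step 1 spells out the upgrade from serial faithfulness to faithful highest weight, which the paper leaves implicit. The one assertion in Step 1 that needs justification is that faithfulness of $\perv{p}{X_T}$ in $\constr{c}{X_T}$ passes to faithfulness of the quotient heart $\loc{T}[-p(T)]$ in $\constr{c}{T}$, since serial faithfulness as defined concerns the subcategories, not the quotients; this does hold, e.g.\ via Miyachi's equivalence $\der{b}{\cat{A}}/\cat{D}^b_{\cat{C}}(\cat{A})\simeq\der{b}{\cat{A}/\cat{C}}$ for a Serre subcategory $\cat{C}\subset\cat{A}$, combined with the identification of the glued quotient heart with the Serre quotient.
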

\begin{proof}
By  \cref{ex:manifold gldim eq} and \cref{cor: gldim depth bound}, $\dim(X)= \gdim{\perv{p}{X}} \leq 2 \depth{X} $. 
\end{proof}

\begin{example}\label{exe:Pn_gldim} Let $X=\C\PP^n$ with affine stratification. The middle perverse sheaves  $\perv{m}{X}$ are faithful by \cref{thm:bbm} and so have global dimension $\dim \C\PP^n=2n$ by \cref{ex:manifold gldim eq}. Note that $\dim(X) = 2 \depth{X}$ in conformity with \cref{cor:dim bded by depth}; moreover any stratification of lesser depth cannot have $1$-connected strata.
\end{example}

\subsection{Hypercohomology of perverse sheaves}
\label{hypercoho_computations}

Suppose $X$ is a topologically stratified space with finitely many strata, each with finite fundamental group, and that $p$ is a perversity for which $\perv{p}{X}$ is a faithful heart in $\constr{c}{X}$. The fundamental group assumption implies that  $\perv{p}{X}$ has enough projective and injective objects and finite global dimension by \cref{on gl dim}. Thus there are equivalences
\[
\constr{c}{X} \simeq \der{b}{\perv{p}{X}} \simeq \cat{K}^b(\mathrm{Proj}\, \perv{p}{X} ) \simeq \cat{K}^b(\mathrm{Inj}\, \perv{p}{X} )
\]
where $\mathrm{Proj}\, \perv{p}{X}$ and $\mathrm{Inj}\, \perv{p}{X}$ are respectively the exact subcategories of projective and injective perverse sheaves. 

These algebraic incarnations of the constructible derived category provide tools for computing the hypercohomology of perverse sheaves. Let $\sh{P}^*$ be a projective resolution of the constant sheaf $\kk_X$ and $\sh{I}^*$ an injective resolution of the dualising complex $\sh{D}_X$. 

\begin{corollary}
\label{prop:hypercoh}
Suppose $X$ is a topologically stratified space with finitely many strata, each with finite fundamental group, and that $p$ is a perversity for which $\perv{p}{X}$ is a faithful heart in $\constr{c}{X}$. Then  $\pi_*\sh{E} \cong  \mor{}{\sh{P}^*}{\sh{E}}$ and $\pi_!\sh{E} \cong  \mor{}{\sh{E}}{\sh{I}^*}^\vee$ in $\constr{c}{\pt}$ where $\pi \colon X \to \pt$ is the map to a point.
\end{corollary}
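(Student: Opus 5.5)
The plan is to express $\pi_*$ and $\pi_!$ as derived Hom functors in $\constr{c}{X}$ and then compute those derived Homs using the algebraic model $\constr{c}{X}\simeq \cat{K}^b(\mathrm{Proj}\,\perv{p}{X})\simeq\cat{K}^b(\mathrm{Inj}\,\perv{p}{X})$. For the first isomorphism, use the adjunction $\pi^*\dashv\pi_*$ with $\pi^*\kk=\kk_X$. This gives, for every $k$,
\[
H^k(\pi_*\sh{E})\cong \Mor{\constr{c}{\pt}}{\kk}{\pi_*\sh{E}[k]}\cong \Ext{\constr{c}{X}}{k}{\kk_X}{\sh{E}}.
\]
Since $\constr{c}{\pt}\simeq\der{b}{\kk}$ and every complex of vector spaces is formal, it suffices to produce a natural isomorphism of graded vector spaces. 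Equivalently, it suffices to identify the complex $\mathrm{RHom}(\kk_X,\sh{E})$ with $\mor{}{\sh{P}^*}{\sh{E}}$ up to quasi-isomorphism.

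The first step is to note that $\kk_X$ is an object of $\constr{c}{X}$. By faithfulness and finite global dimension (\cref{on gl dim}), it therefore corresponds to a bounded complex $\sh{P}^*$ of projective perverse sheaves. This complex is what we mean by a projective resolution of $\kk_X$, even when $\kk_X$ is not itself perverse: it is a resolution by a bounded complex of projectives quasi-isomorphic to $\kk_X$ under the realisation equivalence. The second step is the standard fact that in $\cat{K}^b(\mathrm{Proj})$, morphisms from a bounded complex of projectives $\sh{P}^*$ to an object $\sh{E}$ of the heart, viewed as a complex in degree $0$, are computed by homotopy classes of chain maps. Hence
\[
\Mor{\der{b}{\perv{p}{X}}}{\sh{P}^*}{\sh{E}[k]}\cong H^k\bigl(\mor{}{\sh{P}^*}{\sh{E}}\bigr).
\]
Here $\mor{}{\sh{P}^*}{\sh{E}}$ is the total Hom complex, which has $\Mor{}{\sh{P}^{-n}}{\sh{E}}$ in degree $n$. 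The realisation equivalence identifies the left side with $\Ext{\constr{c}{X}}{k}{\kk_X}{\sh{E}}$, and combining this with the adjunction gives $\pi_*\sh{E}\cong\mor{}{\sh{P}^*}{\sh{E}}$.

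For $\pi_!$, argue dually. Verdier duality gives $\pi_!\sh{E}\cong (\pi_*\mathbb{D}\sh{E})^\vee$, and $\pi_*\mathbb{D}\sh{E}=\mathrm{RHom}(\sh{E},\pi^!\kk)=\mathrm{RHom}(\sh{E},\sh{D}_X)$. Equivalently, apply the adjunction $\pi_!\dashv\pi^!$ and use $\Mor{}{\pi_!\sh{E}}{\kk[-k]}\cong\Ext{}{-k}{\sh{E}}{\sh{D}_X}$ to obtain $H^k(\pi_!\sh{E})^\vee\cong \Ext{}{-k}{\sh{E}}{\sh{D}_X}$. Next, replace $\sh{D}_X$ by a bounded complex $\sh{I}^*$ of injective perverse sheaves. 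Morphisms from a heart object into a bounded complex of injectives are computed by chain maps up to homotopy. This gives $\pi_!\sh{E}\cong\mor{}{\sh{E}}{\sh{I}^*}^\vee$, where dualising a complex of finite-dimensional spaces negates degrees, matching the sign above.

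The main obstacle is making rigorous that the realisation functor identifies Ext groups in $\constr{c}{X}$ between arbitrary complexes, not just between heart objects, with Homs in $\cat{K}^b(\mathrm{Proj})$. This should follow directly from the realisation functor being a triangulated equivalence. There are also routine points to check: the needed finiteness (Hom-finiteness, so that dualising is harmless), and the fact that the adjunctions $\pi^*\dashv\pi_*$ and $\pi_!\dashv\pi^!$ hold for non-compact $X$ with $\pi_*,\pi_!$ landing in $\constr{c}{\pt}$, which is guaranteed by finitely many strata.
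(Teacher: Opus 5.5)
Your proposal is correct and is essentially the paper's proof: reduce to comparing cohomology (complexes over $\kk$ are formal), use $\pi^*\dashv\pi_*$ to rewrite $H^r(\pi_*\sh{E})$ as $\Ext{\constr{c}{X}}{r}{\kk_X}{\sh{E}}$, transport this to $\der{b}{\perv{p}{X}}$ via the faithfulness equivalence, and compute it as $H^r$ of the Hom complex out of the projective resolution $\sh{P}^*$. Your $\pi_!$ argument (via $\pi_!\dashv\pi^!$, $\pi^!\kk=\sh{D}_X$, an injective resolution $\sh{I}^*$, and the degree negation under dualising) is the ``similar direct calculation'' that the paper leaves to the reader.
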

\begin{proof}
The two statements are proved by similar direct calculations. It is enough to show that the cohomology groups are isomorphic. For the first
\[    
  H^r(\pi_*\sh{E}) 
 \cong \Ext{\constr{c}{X}}{r}{\kk_X}{\sh{E}}
 \cong \Ext{\der{b}{\perv{p}{X}}}{r}{\kk_X}{\sh{E}}
 \cong H^{r}( \mor{}{\sh{P}_*}{\sh{E}} ).\qedhere
\]
\end{proof}

\begin{corollary}\label{cor:computation_cohom}
Let $\sh{E}$ be a simple perverse sheaf. Suppose in addition that $\kk_X$ is a shifted perverse sheaf and $\sh{P}^*$ its minimal projective resolution. Then  $H^r(X,\sh{E}) \cong \kk^a$ where $a$ is the multiplicity of the projective cover of $\sh{E}$ as a summand of the term $\sh{P}^{-r}$. 

Dually, if $\sh{D}_X$ is shifted perverse and $\sh{I}^*$ its minimal injective resolution then $H^r_c(X,\sh{E}) \cong \kk^b$ where $b$ is the multiplicity of the injective hull of $\sh{E}$ as a summand of the term $\sh{I}^{r}$.
\end{corollary}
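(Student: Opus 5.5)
The plan is to start from \cref{prop:hypercoh}, which gives $H^r(X,\sh{E}) \cong H^r(\mor{}{\sh{P}^*}{\sh{E}})$ once we account for the shift of $\kk_X$. Write $\kk_X = \sh{Q}[s]$ with $\sh{Q}$ perverse. If $\sh{P}^*$ is a minimal projective resolution of $\sh{Q}$, then its shift resolves $\kk_X$, so we index $\sh{P}^*$ as the minimal complex of projective perverse sheaves quasi-isomorphic to $\kk_X$ in $\der{b}{\perv{p}{X}} \simeq \constr{c}{X}$. This complex exists and is finite because faithfulness gives finite global dimension by \cref{on gl dim}, and the fundamental group hypothesis gives enough projectives. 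Then
\[
H^r(X,\sh{E}) \cong \Ext{\constr{c}{X}}{r}{\kk_X}{\sh{E}} \cong \Ext{\der{b}{\perv{p}{X}}}{r}{\kk_X}{\sh{E}} \cong H^r(\mor{}{\sh{P}^*}{\sh{E}}),
\]
and the complex $\mor{}{\sh{P}^*}{\sh{E}}$ has degree $r$ term $\Mor{}{\sh{P}^{-r}}{\sh{E}}$.

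Next we use simplicity of $\sh{E}$. Let $\ob{P}_{\sh{E}}$ be its projective cover. For an indecomposable projective $\ob{P}_{\sh{F}}$ with top the simple $\sh{F}$, we have $\Mor{}{\ob{P}_{\sh{F}}}{\sh{E}} \cong \Mor{}{\sh{F}}{\sh{E}}$. This is $\End{\sh{E}}$ if $\sh{F}\cong \sh{E}$ and $0$ otherwise. So if $\sh{P}^{-r} \cong \bigoplus_{\sh{F}} \ob{P}_{\sh{F}}^{a_{\sh{F},r}}$, the degree $r$ term is $\End{\sh{E}}^{a_{\sh{E},r}}$. To identify this with $\kk^a$ we need $\End{\sh{E}} \cong \kk$. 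I would note this as the standard Deligne-finite / split situation (e.g.\ algebraically closed $\kk$, or simply-connected strata where simples are IC sheaves of trivial local systems). Otherwise the statement should be read with $\End{\sh{E}}$ in place of $\kk$.

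The key step is to show that all differentials of $\mor{}{\sh{P}^*}{\sh{E}}$ vanish, which is where minimality enters. A differential $d\colon \sh{P}^{-r-1}\to\sh{P}^{-r}$ of a minimal resolution has image contained in the radical of $\sh{P}^{-r}$. Equivalently, no component $\ob{P}_{\sh{F}}\to\ob{P}_{\sh{F}'}$ is an isomorphism, so every component lies in the radical of the category of projectives. For $f\colon \sh{P}^{-r}\to\sh{E}$, the composite $f\circ d$ then factors through $\mathrm{rad}\,\sh{P}^{-r}$. Since $f$ factors through the top $\sh{P}^{-r}/\mathrm{rad}\,\sh{P}^{-r}$ (as $\sh{E}$ is simple, hence semisimple), $f$ kills the radical and $f\circ d=0$. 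Hence the cohomology equals the terms, giving $H^r(X,\sh{E})\cong\kk^{a}$ with $a$ the multiplicity of $\ob{P}_{\sh{E}}$ in $\sh{P}^{-r}$.

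The main obstacle is this minimality bookkeeping: making precise that a minimal resolution in $\perv{p}{X}$, equivalently in $\mod{\palg{p}{X}}$, has radical differentials. I would argue it via the module description, where minimal means that each $\sh{P}^{-r}$ is the projective cover of the relevant syzygy, so the kernel of the cover lies in the radical. The dual statement for $H^r_c(X,\sh{E})$ follows the same way. Use the second isomorphism $\pi_!\sh{E}\cong\mor{}{\sh{E}}{\sh{I}^*}^\vee$ of \cref{prop:hypercoh} with the minimal injective resolution of the shifted perverse $\sh{D}_X$. Then $\Mor{}{\sh{E}}{\ob{I}_{\sh{F}}}$ is nonzero only for the injective hull of $\sh{E}$, and minimality puts the differentials into the socle-killing part, so the induced maps vanish. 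Dualising, $H^r_c(X,\sh{E})\cong\kk^{b}$ with $b$ the multiplicity of the injective hull of $\sh{E}$ in $\sh{I}^r$.
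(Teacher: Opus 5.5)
Your argument is the paper's. \cref{prop:hypercoh} identifies $H^r(X,\sh{E})$ with the cohomology of $\mor{}{\sh{P}^*}{\sh{E}}$, and minimality forces all differentials to vanish: differentials land in the radical, and maps to a simple object kill the radical. Your caveat that one needs $\End{\sh{E}}\cong\kk$ is correct and is left implicit in the paper; it is automatic, for example, when $\kk$ is algebraically closed or the strata are simply connected.

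The one step you should not take on trust is the index in the dual half. There you copy $\sh{I}^{r}$ from the statement instead of deriving it. Tracking the grading through $\pi_!\sh{E}\cong\mor{}{\sh{E}}{\sh{I}^*}^\vee$ gives
\[
H^r_c(X,\sh{E})\cong H^{-r}\bigl(\mor{}{\sh{E}}{\sh{I}^*}\bigr)^\vee\cong\Ext{}{-r}{\sh{E}}{\sh{D}_X}^\vee .
\]
So once the differentials vanish, $b$ is the multiplicity of the injective hull of $\sh{E}$ in $\sh{I}^{-r}$, not in $\sh{I}^{r}$.

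Here is a check. Take $X=[0,1]$ stratified by its endpoints and interior, with $\jmath\colon(0,1)\hookrightarrow X$ and the zero perversity; this heart is faithful by \cref{thm:polischuk}. Then $\sh{D}_X\cong\jmath_!\kk_{(0,1)}[1]$, whose minimal injective resolution is $\kk_X\to\kk_{\{0\}}\oplus\kk_{\{1\}}$ in degrees $-1,0$. The simple object $\sh{E}=\jmath_!\kk_{(0,1)}$ has injective hull $\kk_X$. One has $H^1_c(X,\sh{E})\cong\kk$, although $\sh{I}^{1}=0$.

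So the printed dual statement has a sign slip, and your sketch inherits it. The projective half, with $\sh{P}^{-r}$, is indexed correctly.
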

\begin{proof}
This follows from \cref{prop:hypercoh} and the fact that the complex $ \mor{}{\sh{P}^*}{\sh{E}}$ has vanishing differential when $\sh{E}$ is simple \cite[proof of Corollary 2.5.4]{MR1110581}. To see this recall that the minimal projective resolution is constructed by taking $\sh{P}^0$ to be the projective cover of the top of $\kk_X[-p(X)]$, then taking $\sh{P}^{-1}$ to be the projective cover of the top of the kernel of $\sh{P}^0 \to \kk_X[-p(X)]$ and so on. Therefore composing with $\sh{P}^{i-1} \to \sh{P}^i$ induces the zero map
\[
\mor{}{\sh{P}^i}{\sh{E}} \to \mor{}{\sh{P}^{i-1}}{\sh{E}}
\]
when $\sh{E}$ is simple. The dual result is proved similarly.
\end{proof}

The following is a simple example of its application in the complex algebraic context. In the next section we apply it in the case of simplicial complexes.
\begin{example}
Consider the middle perverse sheaves $\perv{m}{\C\PP^2}$ on $\C\PP^2$ with affine stratification as in \cref{middle_perv_CP2}. As there, let $\ob{S}_r = \kk_{\C\PP^r}[r]$ denote the simple perverse sheaf supported on $\C\PP^r$ and $\ob{P}_r$ its projective cover. The projective resolution of $\kk_X = \ob{S}_2[-2]$ is 
\[
\begin{tikzcd}
0\ar{r} 
&  \ob{P}_2 \ar{r}{\delta} 
& \ob{P}_1 \ar{r}{\begin{pmatrix} \beta \\-\gamma \end{pmatrix}}
& \ob{P}_0\oplus \ob{P}_2 \ar{r}{(\alpha,\delta)}  
& \ob{P}_1\ar{r}{\gamma}
& \ob{P}_2 \ar{r} 
& 0.
\end{tikzcd}
\]
where the direct sum term is in degree zero and we identify irreducible morphisms between the projectives with the arrows in the quiver presentation in \cref{middle_perv_CP2}. By \cref{cor:computation_cohom}, taking morphims into perverse sheaves yields complexes computing their cohomology. For example taking morphisms into $S_2$ yields a split complex computing the (shifted) cohomology
\[
H^r(\C\PP^2;\ob{S}_2)=
\begin{cases} 
\kk \quad \hbox{if} \quad r=-2,0,2 
\\ 0 \quad \text{otherwise}
\end{cases}
\]
of $\C\PP^2$.  Taking morphisms into $\ob{P}_2=\stan{}{2}\cong\jmath_{!}\kk_{\C^2}[2]$  yields the complex 
\[
\begin{tikzcd}
0 \ar{r} 
& \langle \id_{\ob{P}_2} \rangle \ar{r}{\gamma} 
& \langle \gamma \rangle \ar{r}{0} 
&\langle \id_{\ob{P}_2} \rangle \ar{r}{\gamma} 
&\langle \gamma \rangle \ar{r}{0} 
&\langle \id_{\ob{P}_2} \rangle \ar{r}
&0
\end{tikzcd}
\]
computing the (shifted) compactly supported cohomology of $\C^2$. Finally, taking morphisms into $\ob{P}_0=\ob{I}_0$ yields the exact complex
\[
\begin{tikzcd}
0 \ar{r} 
& 0 \ar{r}
& \langle \beta \rangle \ar{r}{\alpha} 
&\langle \id_{\ob{P}_0},\alpha\beta \rangle \ar{r}{\beta} 
&\langle \beta \rangle \ar{r}
&0 \ar{r}
&0
\end{tikzcd}
\]
showing that $H^*(\C\PP^2;\ob{P}_0)=0$.
\end{example}

\subsection{Hypercohomology of perverse sheaves: the simplicial case}\label{hypercoho_simplicial}

Applying the results of \cref{hypercoho_computations} in the simplicial context recovers the theory of cellular perverse sheaves \cite{goresky} and the complexes for computing their cohomology.  

Let $X=|K|$ be the geometric realisation of a finite simplicial complex $K$, stratified by the relative interiors of the simplices,  and $p$ be a GM-perversity. For each $\sigma\in K$ let $\imath_\sigma\colon\sigma\hookrightarrow  X$ and $\jmath_\sigma\colon\sigma^{\circ}\hookrightarrow X$ be respectively the inclusions of the simplex and its relative interior into $X$.

Following \cite{MR1453053} we say $\sigma$ is of \defn{!-type} if $p(\sigma)=p(\partial\sigma)$ and of \defn{*-type} if $p(\sigma)=p(\partial\sigma)-1$. The zero simplex is of both types. The terminology is explained by \cite[Lemma 1.1]{MR1453053} which identifies the simple objects in $\perv{p}{X}$ as
\[
\ob{S}_{\sigma}=
\begin{cases} 
{\jmath_{\sigma}}_!\jmath_\sigma^*\kk_{\sigma}[-p(\sigma)] & \text{if $\sigma$ is of !-type} \\ 
{\jmath_{\sigma}}_*\jmath_\sigma^*\kk_{\sigma}[-p(\sigma)] & \text{if $\sigma$ is of *-type}. 
\end{cases}
\]
In this situation $\perv{p}{X}$ is a faithful heart in $\constr{c}{X}$ by \cref{thm:polischuk}, indeed it is serially faithful and hence highest weight (with respect to the geometric ordering of simplices) by \cref{thm:fhw}. The standard object is $\stan{}{\sigma} ={\jmath_{\sigma}}_!\jmath_\sigma^*\kk_{\sigma}[-p(\sigma)]$ and the costandard object is $\costan{}{\sigma} ={\jmath_{\sigma}}_*\jmath_\sigma^*\kk_{\sigma}[-p(\sigma)] = \kk_\sigma[-p(\sigma)]$. These  are the corresponding simple objects when the simplex $\sigma$ is respectively of !-type and of *-type. In particular the standard and costandard objects are (shifted) constructible sheaves.

We now recall the quiver description of the perverse sheaves. The \defn{perverse dimension of a simplex $\sigma$} as introduced by MacPherson, see \cite[\S23.6]{goresky}, is 
\[
\delta(\sigma)=
\begin{cases} p^*(\sigma) &\text{if $\sigma$ is of !-type} \\ 
-p(\sigma)& \text{if $\sigma$ is of *-type}. 
 \end{cases}
\]
The perverse dimension $\delta\colon\Z_{\geq0}\to\Z$ is injective with image the interval $[p(X),-p^*(X)]$ of length $-p^*(X)-p(X) = \dim(X)$.

By \cite[\S24.4]{goresky} there is an exact equivalence $\perv{p}{X} \simeq \mod{\palg{p}{X}}$ where $\palg{p}{X}$ is the path algebra of the quiver $\pfun{p}{Q}(X)$ with 
\begin{itemize}
\item vertices the simplices $\sigma\in K$;
\item an arrow $t_{\sigma\tau} \colon \sigma \to \tau$ if $\sigma\subseteq\tau$ or $\sigma\supseteq\tau$ and $\delta(\sigma)=\delta(\tau)+1$
\end{itemize} 
modulo the ideal  generated by relations  $\sum_{\delta(\alpha)=i} t_{\sigma\alpha}t_{\alpha\tau}$ where $\delta(\sigma)= i+1$ and $\delta(\tau)=i-1$.  The quiver $\pfun{p}{Q}(X)$ is directed and ranked by negative perverse dimension. The arrow $t_{\sigma\tau}$ induces a canonical morphism  $c_{\tau\sigma} \colon P_\tau\to P_\sigma$ between the indecomposable projective modules.

\begin{proposition}\label{proj_res_simplicial}
The constant sheaf $\kk_X$ has a projective resolution, i.e. an isomorphic object in $\der{b}{\proje{\perv{p}{X}}}$, of the form
\[     
\sh{P}^* \coloneqq  \left( 0\to \bigoplus_{\pdim{\sigma} =p^*(X)} \ob{P}_\sigma \to \cdots \to \bigoplus_{\pdim{\sigma}=-p(X)} \ob{P}_\sigma \to 0 \right),
  \]
in $\constr{c}{X}$ where each $\sh{P}_\sigma$  is in degree $\pdim{\sigma}$ and the differential has entries $d_{\sigma\tau} =c_{\sigma\tau}$. (The quiver relations show this is a complex, although not that it is exact except at the last term.)
\end{proposition}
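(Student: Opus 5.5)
Since $\perv{p}{X}$ is faithful with enough projectives and finite global dimension (\cref{on gl dim}), we have $\constr{c}{X}\simeq\cat{K}^b(\mathrm{Proj}\,\perv{p}{X})$. So $\kk_X$ has \emph{some} bounded projective resolution. The task is to show that its minimal resolution has the stated shape. The plan is to determine the terms of the minimal resolution from Ext groups into simples, and then identify the differential.

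First, the multiplicity of $\ob{P}_\sigma$ in degree $r$ of a minimal complex of projectives representing $\kk_X$ equals $\dim \Ext{\constr{c}{X}}{-r}{\kk_X}{\ob{S}_\sigma}$. This uses faithfulness and the vanishing of the differential in $\mor{}{\sh{P}^*}{\ob{S}_\sigma}$, exactly as in the proof of \cref{cor:computation_cohom}. Note that no perversity hypothesis on $\kk_X$ is needed here, since a minimal bounded complex of projectives exists for any object. So I would compute $H^{*}(X;\ob{S}_\sigma)$ using the explicit simples from \cite[Lemma 1.1]{MR1453053}.
\begin{itemize}
\item If $\sigma$ is of !-type, $\ob{S}_\sigma={\jmath_\sigma}_!\kk_{\sigma^\circ}[-p(\sigma)]$. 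Then $H^*(X;\ob{S}_\sigma)=H^*_c(\sigma^\circ)[-p(\sigma)]$, which is $\kk$ in degree $\dim\sigma+p(\sigma)=-p^*(\sigma)$ only.
\item If $\sigma$ is of *-type, $\ob{S}_\sigma={\jmath_\sigma}_*\kk_{\sigma^\circ}[-p(\sigma)]$. Then $H^*=H^*(\sigma^\circ)[-p(\sigma)]$, which is $\kk$ in degree $p(\sigma)$ only.
\end{itemize}
In both cases the cohomology is one-dimensional in degree $-\pdim{\sigma}$. Hence the minimal resolution has exactly one copy of $\ob{P}_\sigma$, placed in degree $\pdim{\sigma}$, and the terms range over the perverse dimensions in $[p(X),-p^*(X)]$, up to the sign and indexing conventions of the statement.

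Next, identify the differential. Components $\ob{P}_\tau\to\ob{P}_\sigma$ between adjacent degrees correspond to elements of $\Mor{}{\ob{P}_\tau}{\ob{P}_\sigma}$ in the radical modulo radical squared. By the quiver presentation, these are spanned by the canonical $c_{\tau\sigma}$ when $t_{\sigma\tau}$ is an arrow. Hence $d_{\sigma\tau}=\lambda_{\sigma\tau}c_{\sigma\tau}$ for scalars $\lambda_{\sigma\tau}$, and one must show that all of them can be normalised to $1$ (or to the orientation signs absorbed in the $c$'s). I would argue this using the quiver relations $\sum_\alpha t_{\sigma\alpha}t_{\alpha\tau}$: these are the only relations in length two, and $d^2=0$ forces the scalars to be compatible along each such relation. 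I would then rescale the basis of each $\ob{P}_\sigma$ inductively in degree. Nonvanishing of each $\lambda_{\sigma\tau}$ would come from the fact that the differential, after applying $\mor{}{-}{\ob{P}_\tau}$ or equivalently the Ext structure, detects the nonzero Yoneda products $\Ext{}{1}{\ob{S}_\sigma}{\ob{S}_\tau}$ given by the arrows.

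The main obstacle is this last step, namely pinning down the scalars rather than only their support. As a fallback I would instead construct $\sh{P}^*$ directly, with $d_{\sigma\tau}=c_{\sigma\tau}$. It is a complex by the quiver relations, as the statement notes. I would then show that it is isomorphic to $\kk_X$ by comparing its images under $\mor{}{-}{\costan{}{\tau}}$, which has one-dimensional terms, with the known values $H^*(X;\kk_\tau[-p(\tau)])$. Since the costandards form a full exceptional collection by \cref{thm:fhw}, a map $\sh{P}^*\to\kk_X$ that induces isomorphisms on $\mor{}{-}{\costan{}{\tau}}$ for all $\tau$ is an isomorphism.
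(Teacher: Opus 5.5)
\textbf{There is a genuine gap: the proposal does not establish the differential.} Your computation of the terms is correct. It is the same computation the paper makes: $\Ext{}{i}{\kk}{\ob{S}_\sigma}\cong\kk$ exactly for $i=-\pdim{\sigma}$, via the $!$/$*$-type case split. You do it directly on $X$ with a minimal complex in $\cat{K}^b(\mathrm{Proj})$ rather than after reducing to a simplex, which has the merit of not needing $\kk_X$ to be shifted perverse.

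\textbf{The normalisation step cannot work as stated.} It uses only the multiplicities and the constraints that $d^2=0$ imposes through the length-two relations, and all of this is local. It therefore holds verbatim for the minimal resolution of any rank-one local system $\mathcal{L}$ on $X$. Each $\ob{S}_\sigma$ is supported on the contractible closed simplex $\overline{\sigma}$, so $\Ext{}{*}{\mathcal{L}}{\ob{S}_\sigma}\cong\Ext{}{*}{\kk_X}{\ob{S}_\sigma}$, and the resolution of $\mathcal{L}$ has the same shape with entries $\lambda'_{\sigma\tau}c_{\sigma\tau}$. If inductive rescaling always brought all scalars to $1$, you would conclude $\mathcal{L}\cong\kk_X$.

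Concretely, take $X=\partial\Delta^2$ with the zero perversity and $\mathrm{char}\,\kk\neq 2$. The quiver is a hexagon with no relations at all. The product of the scalars around it, with exponents alternating $\pm1$, is invariant under rescaling the $\ob{P}_\sigma$ and is not constrained by $d^2=0$. It takes different values for $\kk_X$ and for the local system with monodromy $-1$. So you need global input specific to $\kk_X$.

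\textbf{How the paper gets the global input.} It embeds $X\hookrightarrow\Delta^N$, extending $p$ so that simplices of dimension greater than $\dim X$ are of $*$-type. On $\Delta^N$ the sheaf $\kk_{\Delta^N}$ is the shifted simple at the unique source, and its minimal resolution is built from projective covers of tops of successive kernels. Minimality together with the symmetry of the simplex rules out zero entries. The exact restriction $\imath^*$, which preserves projective covers and sends $\kk_{\Delta^N}$ to $\kk_X$, then transports the result to $X$. This reduction is also what fixes the normalisation of the $c_{\sigma\tau}$ on $X$ consistently, i.e. the signs you allude to.

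\textbf{Nonvanishing.} Your argument here is only a pointer. An arrow gives a nonzero $\mathrm{Ext}^1$ between $\ob{S}_\sigma$ and $\ob{S}_\tau$. What you need is that its Yoneda product with the generator of $\Ext{}{-\pdim{\tau}}{\kk_X}{\ob{S}_\tau}$ is nonzero, equivalently that a connecting map in hypercohomology for the corresponding length-two perverse sheaf is nonzero. The arrow alone does not force this, as $\ob{P}_\sigma[-\pdim{\sigma}]\oplus\ob{P}_\tau[-\pdim{\tau}]$ shows.

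\textbf{The fallback.} It has the same defect in another form. You never construct a morphism $\sh{P}^*\to\kk_X$. Equality of the dimensions of $\mor{*}{-}{\costan{}{\tau}}$ cannot replace one, since the local system $\mathcal{L}$ above has the same dimensions as $\kk_X$ for every $\tau$. Constructing such a map is exactly where the correct normalisation of the $c_{\sigma\tau}$ has to be used.
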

\begin{proof}
First we reduce to the case when $X$ is a single simplex. 
There is a closed embedding $\imath \colon X \hookrightarrow \Delta^N$ for any sufficiently large $N$. We extend the perversity $p$ on $X$ to one on $\Delta^N$ by assuming that all simplices of strictly greater dimension than $\dim X$ have $*$-type, in particular we assume that $N>\dim X$ so that $\Delta^N$ is of $*$-type. Note that  the induced restriction $\imath^*$ is exact, maps the projective cover of the simple $\ob{S}_\sigma$ in $\perv{p}{\Delta^N}$ to the projective cover of the corresponding simple object in $\perv{p}{X}$,  and satisfies $\imath^*\kk_{\Delta^N} = \kk_X$. Therefore it suffices to prove the result in the special case $X=\Delta^N$. 
 
Since $\Delta^N$ is of $*$-type, $\kk_{\Delta^N}=\ob{S}_{\Delta^N}[p(\Delta^N)]$ is the (shifted) simple perverse sheaf corresponding to the open stratum. This is the simple object at the unique source of the quiver $\pfun{p}{Q}(\Delta^N)$. Since the quiver is directed and ranked by negative perverse dimension the minimal projective resolution of this simple object has the form
\[
0\to \bigoplus_{\pdim{\sigma} =p^*(X)} \ob{P}_\sigma^{n_\sigma} \to \cdots \to \bigoplus_{\pdim{\sigma}=-p(X)} \ob{P}_\sigma^{n_\sigma}  \to 0
\] 
for suitable multiplicities $n_\sigma\in \N$ where the component of the differential $P_\sigma \to P_\tau$  is either  the canonical morphism $c_{\sigma\tau}$ or zero according to whether the simple $S_\sigma$ appears in the top of the kernel at that stage, or not.

In fact, by considering separately the cases when $\sigma$ is of $!$ and $*$-type one can compute
\[
\Ext{\constr{c}{X}}{i}{\kk_{\Delta^N}}{\ob{S}_\sigma} = 
\begin{cases}
\kk &  i=-\delta(\sigma)\\
0 &  \text{otherwise}. 
\end{cases}
\]
Hence $n_\sigma=1$ for all $\sigma$ as claimed. By symmetry of the simplex, if $d_{\sigma\tau}=0$ then $d_{\sigma\tau'}=0$ for all $\tau'$ with $\pdim{\tau'}=\pdim{\tau}$. This is impossible because we chose the resolution to be minimal. Hence $d_{\sigma\tau}=c_{\sigma\tau}$ for all $\sigma$ and $\tau$ as claimed.
\end{proof}

Now consider a general perverse sheaf $\sh{E}$. Let $[\sh{E}:S_\sigma]$ be  the multiplicity of the simple $S_\sigma$ in the Jordan--H\"older filtration of $\sh{E}$. Applying $\mor{}{-}{\sh{E}}$ to the above projective resolution $\sh{P}^*$ we obtain a complex
\[
0\to \bigoplus_{\pdim{\sigma} =-p(X)} k^{[\sh{E}:S_\sigma]}  \to \cdots \to \bigoplus_{\pdim{\sigma}=p^*(X)} k^{[\sh{E}:S_\sigma]}  \to 0
\]
of vector spaces indexed by the simplices with first term in degree $p(X)$. This is nothing but the quiver description of $\sh{E}$, which we can write in this way as a complex because of the form of the relations.  To see this note that the projective resolution is the projective generator in $\perv{p}{X}$, but written as a complex by shifting degrees. Morphisms from the projective generator induce an equivalence $\constr{c}{X} \simeq \der{b}{\mod{\palg{p}{X}}}$, but because of the degree shifts we obtain the quiver description of each perverse sheaf as a complex too. This description is referred to as a \defn{cellular perverse sheaf} \cite[Definition 23.11]{goresky}.
\begin{corollary}[{\cf \cite[\S 23]{goresky}}]
\label{cor:cellular perverse sheaves}
The hypercohomology $H^{*}(X;\sh{E})$ is the cohomology of the complex $\mor{}{\sh{P}^*}{\sh{E}}$.
\end{corollary}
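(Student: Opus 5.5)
The plan is to obtain the corollary as a special case of \cref{prop:hypercoh}, using \cref{proj_res_simplicial} to supply the projective resolution. Since $X=|K|$ is stratified by open simplices, the strata are contractible and in particular have finite fundamental group. Moreover $\perv{p}{X}$ is faithful by \cref{thm:polischuk}, and it has finite global dimension by \cref{on gl dim}. So the hypotheses of \cref{prop:hypercoh} hold, and it gives $\pi_*\sh{E}\cong\mor{}{\sh{P}'}{\sh{E}}$ for \emph{any} projective resolution $\sh{P}'$ of $\kk_X$.

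The first step is to justify that the complex $\sh{P}^*$ of \cref{proj_res_simplicial} is a legitimate choice of $\sh{P}'$. Here one has to read \cref{proj_res_simplicial} correctly: it asserts that $\sh{P}^*$ is isomorphic to $\kk_X$ in $\constr{c}{X}\simeq\der{b}{\perv{p}{X}}\simeq\cat{K}^b(\mathrm{Proj}\,\perv{p}{X})$. This is not a resolution concentrated in nonpositive degrees. Because of the shifts $\kk_X$ is generally not perverse, and the terms sit in degrees $\pdim{\sigma}$.

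The second step is the computation itself. For any bounded complex of projectives $\sh{P}^*$ and any $\sh{E}$ in the heart,
\[
\Ext{\der{b}{\perv{p}{X}}}{r}{\sh{P}^*}{\sh{E}}\cong H^r(\mor{}{\sh{P}^*}{\sh{E}}).
\]
This holds because homotopy classes of maps from a bounded complex of projectives compute morphisms in the derived category; concretely, $\cat{K}^b(\mathrm{Proj})\to\der{b}{}$ is fully faithful. Combining this with faithfulness as in the proof of \cref{prop:hypercoh} gives
\[
H^r(X;\sh{E})\cong\Ext{\constr{c}{X}}{r}{\kk_X}{\sh{E}}\cong\Ext{\der{b}{\perv{p}{X}}}{r}{\sh{P}^*}{\sh{E}}\cong H^r(\mor{}{\sh{P}^*}{\sh{E}}).
\]
The same computation identifies the graded pieces of $\mor{}{\sh{P}^*}{\sh{E}}$ with the spaces $\kk^{[\sh{E}:S_\sigma]}$ described before the corollary, since $\dim\Mor{}{\ob{P}_\sigma}{\sh{E}}=[\sh{E}:S_\sigma]$.

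The main point requiring care is the grading convention. I need the degree placement of $\sh{P}_\sigma$ in degree $\pdim{\sigma}$ to be compatible with the isomorphism $\sh{P}^*\cong\kk_X$, and not off by the overall shift $p(X)$. I would check this on a single vertex, where $\kk_{\pt}=\ob{S}_{\pt}=\ob{P}_{\pt}$ sits in degree $0=\pdim{\pt}$. The reduction to $\Delta^N$ via the exact functor $\imath^*$ in \cref{proj_res_simplicial} preserves this convention, so no further shift should arise.
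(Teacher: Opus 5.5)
Your proposal is correct and takes essentially the paper's approach. The paper's proof is the one-line chain $H^k(X;\sh{E})\cong\Mor{}{\kk_X}{\sh{E}[k]}\cong H^k(\mor{}{\sh{P}^*}{\sh{E}})$, which tacitly uses exactly what you make explicit: the faithfulness from \cref{thm:polischuk}, the identification $\constr{c}{X}\simeq\cat{K}^b(\mathrm{Proj}\,\perv{p}{X})$, and the isomorphism $\sh{P}^*\cong\kk_X$ supplied by \cref{proj_res_simplicial}; your grading check and the identification of the graded pieces with $\kk^{[\sh{E}:S_\sigma]}$ are not needed for the corollary itself.
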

\begin{proof}
By definition  $H^k(X;\sh{E}) = \mor{}{\kk_X}{\sh{E}[k]} = H^k( \mor{}{\sh{P}^*}{\sh{E}} )$.
\end{proof}

\begin{example}
    Let $p$ be the zero perversity, so that $\perv{p}{X}$ is the constructible sheaves on $X=|K|$ stratified by the relative interiors of the simplices. The simple constructible sheaves are $\ob{S}_\sigma = {\jmath_\sigma}_!\jmath_\sigma^*\kk_X$, the injective constructible sheaves are the $\ob{I}_\sigma={\imath_\sigma}_*\imath_\sigma^*\kk_X$, and the projective constructible sheaves are the $\ob{P}_\sigma={\jmath_{\star{\sigma}}}_!\jmath_{\star{\sigma}}^*\kk_X$ where $\jmath_{\star{\sigma}}\colon \bigcup_{\sigma \subseteq \tau} \tau^\circ \hookrightarrow X$ is the inclusion of the open star of $\sigma$.
    
    The constant sheaf $\kk_X$ is perverse and has minimal projective resolution

\[     
 0\to \bigoplus_{\dim\sigma =\dim X} \ob{P}_\sigma \to \cdots \to  \bigoplus_{\dim\sigma=1} \ob{P}_\sigma \to \bigoplus_{\dim\sigma=0} \ob{P}_\sigma \to 0.
  \] 

  Applying \cref{cor:cellular perverse sheaves} to the simple object $\ob{S}_{\sigma}={\jmath_\sigma}_!\jmath_\sigma^*\kk_X$ computes the compactly supported cohomology 
  \[
  H^r(X; {\jmath_\sigma}_!\jmath_\sigma^*\kk_{\sigma} ) = 
  \begin{cases} \kk \quad \hbox{if} \quad r= \dim\sigma \\ 
  0 \quad \hbox{otherwise}
  \end{cases}
  \]
  of the interior of the corresponding simplex. Taking morphisms into the constant sheaf $\kk_X$ instead yields the cochain complex of $X$. 
\end{example}

\begin{remark}
Suppose $X$ is a Whitney stratified space and $p$ is a GM-perversity. Goresky \cite{Goresky_1978} showed that $X$ has a triangulation $X_T$ such that the closure of each stratum is a sub-complex of $X_T$. If $\sh{E}\in \perv{p}{X}$ then $\sh{E}\in \perv{p}{X_T}$ --- the analogue is true whenever one stratification refines another. Therefore the procedure above provides a complex computing the hypercohomology of $\sh{E}$, \ie we have simplicial approach to computing the hypercohomology of any GM-perverse sheaf on any Whitney stratified space. Of course this will only be effective when there is some reasonable triangulation.
\end{remark}

\bibliographystyle{myalpha}
\bibliography{references1.bib}

\end{document}